\let\OLDthebibliography\thebibliography
\renewcommand\thebibliography[1]{
  \OLDthebibliography{#1}
  \setlength{\parskip}{0pt}
  \setlength{\itemsep}{0pt plus 0.3ex}
}
\newtheorem{thm}{Theorem}[section]
\newtheorem{lemma}[thm]{Lemma}
\newtheorem{prop}[thm]{Proposition}
\newtheorem{cor}[thm]{Corollary}
\newtheorem{conj}[thm]{Conjecture}
\theoremstyle{definition}
\newtheorem{defn}[thm]{Definition}
\theoremstyle{remark}
\newtheorem{remark}[thm]{Remark}
\numberwithin{equation}{section}
\newcommand{\mmod}[1]{{\,\,\mathrm{mod}\,\,#1}}
\newcommand*\wrapletters[1]{\wr@pletters#1\@nil}
\def\wr@pletters#1#2\@nil{#1\allowbreak\if&#2&\else\wr@pletters#2\@nil\fi}
\def\alp{{\alpha}} 
\def\gam{{\gamma}} 
\def\del{{\delta}}
\def\tet{{\theta}}  
\def\lam{{\lambda}} \def\Lam{{\Lambda}}
\def\ome{{\omega}}  
\def\eps{\varepsilon}
\def\le{\leqslant} \def\ge{\geqslant}  
\def \leq {\leqslant} \def \geq {\geqslant}
\def\d{{\,{\rm d}}}
\def \bN {\mathbb N}
\def \bP {\mathbb P}
\def \bQ {\mathbb Q}
\def \bR {\mathbb R}
\def \bZ {\mathbb Z}
\def \bI {\mathbf I}
\def \ba {\mathbf a}
\def \bb {\mathbf b}
\def \be {\mathbf e}
\def \bp {\mathbf p}
\def \bq {\mathbf q}
\def \br {\mathbf r}
\def \bs {\mathbf s}
\def \bu {\mathbf u}
\def \bv {\mathbf v}
\def \bx {\mathbf x}  \def \bX {\mathbf X}
\def \bw {\mathbf w}
\def \by {\mathbf y}   \def \bfY {\mathbf Y}
\def \bz {\mathbf z}   \def \bfZ {\mathbf Z}
\def \bzero {\mathbf 0}
\def \balp {\boldsymbol{\alp}}
\def \bbet {\boldsymbol{\beta}}
\def \bgam {\boldsymbol{\gam}}
\def \btet {\boldsymbol{\theta}}
\def \cB {\mathcal B}
\def \cI {\mathcal I}
\def \cJ {\mathcal J}
\def \cL {\mathcal L}
\def \cP {\mathcal P}
\def \cR {\mathcal R}
\def \rank {\mathrm{rank}}
\def \dim {\mathrm{dim}}
\def \det {\mathrm{det}}
\def \diag {\mathrm{diag}}
\def \supp {{\mathrm{supp}}}
\def \Bad {{\mathrm{Bad}}}
\def \dist {{\mathrm{dist}}}
\def \SL {\mathrm{SL}}
\def \GL {\mathrm{GL}}
\def \Gr {{\mathrm{Gr}}}
\begin{document}
\title[Diophantine transference inequalities]{Diophantine transference inequalities: weighted, inhomogeneous, and intermediate exponents}
\author[Chow]{Sam Chow}
\address{Department of Mathematics, University of York, Heslington, York, YO10 5DD, United Kingdom}
\email{Sam.Chow@maths.ox.ac.uk}
\author[Ghosh]{Anish Ghosh}
\address{School of Mathematics,
Tata Institute of Fundamental Research, Mumbai, India 400005}
\email{ghosh@math.tifr.res.in}
\author[Guan]{Lifan Guan}
\address{Department of Mathematics, University of York, Heslington, York, YO10 5DD, United Kingdom}
\email{lifan.guan@york.ac.uk}
\author[Marnat]{Antoine Marnat}
\address{Department of Mathematics, University of York, Heslington, York, YO10 5DD, United Kingdom}
\email{antoine.marnat@york.ac.uk}
\author[ Simmons]{David Simmons}
\address{Department of Mathematics, University of York, Heslington, York, YO10 5DD, United Kingdom}
\email{david.simmons@york.ac.uk}
\subjclass[2010]{11J83}
\keywords{Metric Diophantine approximation, geometry of numbers}
\thanks{}
\date{}
\begin{abstract}
We extend the Khintchine transference inequalities, as well as a homogeneous--inhomogeneous transference inequality for lattices, due to Bugeaud and Laurent, to a weighted setting. We also provide applications to inhomogeneous Diophantine approximation on manifolds and to weighted badly approximable vectors. Finally, we interpret and prove a conjecture of Beresnevich--Velani (2010) about inhomogeneous intermediate exponents.
\end{abstract}
\maketitle

\section{Introduction}
\label{intro}

Dirichlet's approximation theorem \cite{Dir} is a foundational result in Diophantine approximation, and follows straightforwardly from the pigeonhole principle.

\begin{thm} [Dirichlet's approximation theorem] If $\btet = (\tet_1, \ldots, \tet_m) \in \bR^m$ and $N \in \bN$ then there exists $q \le N$ such that
\[
\| q \tet_i \|_{\bR / \bZ} \le N^{-1/m} \qquad (1 \le i \le m).
\]
\end{thm}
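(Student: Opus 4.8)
The plan is to run the classical pigeonhole argument on the torus $(\bR/\bZ)^m$, but in its measure‑theoretic (Blichfeldt‑type) form, so that no divisibility hypothesis on $N$ is needed. First I would introduce the $N+1$ points $P_q := (q\tet_1, \ldots, q\tet_m) + \bZ^m \in (\bR/\bZ)^m$ for $q = 0, 1, \ldots, N$, and to each one associate the open box $B_q := P_q + \bigl(-\tfrac12 N^{-1/m},\, \tfrac12 N^{-1/m}\bigr)^m$. Since $N^{-1/m} \le 1$, each $B_q$ has Haar measure exactly $N^{-1}$.

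The crux is the counting step: because $\sum_{q=0}^{N} \meas(B_q) = (N+1)/N > 1 = \meas\bigl((\bR/\bZ)^m\bigr)$, the sets $B_0, \ldots, B_N$ cannot be pairwise disjoint, so $B_q \cap B_{q'} \neq \emptyset$ for some $0 \le q' < q \le N$. Choosing a point in this intersection and applying the triangle inequality coordinatewise in $\bR/\bZ$ yields $\|q\tet_i - q'\tet_i\|_{\bR/\bZ} < N^{-1/m}$ for every $1 \le i \le m$.

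Finally I would set $q'' := q - q'$; then $1 \le q'' \le N$ and $\|q'' \tet_i\|_{\bR/\bZ} = \|q\tet_i - q'\tet_i\|_{\bR/\bZ} < N^{-1/m}$ for all $i$, which gives the theorem (indeed with strict inequality). The one point that needs genuine care is the choice of pigeonhole formulation: partitioning $(\bR/\bZ)^m$ into $N$ congruent small boxes would force $N^{1/m} \in \bN$, whereas the overlapping‑measure argument above applies uniformly to every $N \in \bN$ (the case $N = 1$, where $N^{-1/m} = 1$, being vacuous anyway since $\|\cdot\|_{\bR/\bZ} \le 1/2$). Everything else is routine.
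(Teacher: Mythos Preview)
Your argument is correct; the Blichfeldt-type overlap argument on $(\bR/\bZ)^m$ is a clean way to avoid the integrality issue with $N^{1/m}$, and it even yields the strict inequality. The paper does not actually prove this theorem --- it simply cites it as classical and remarks that it ``follows straightforwardly from the pigeonhole principle'' --- so your measure-theoretic pigeonhole is entirely in the spirit of what the paper invokes, and there is nothing further to compare.
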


\noindent In general it is sharp, but for some $\btet$ there are closer rational approximations. This leads to the notion of \emph{exponents of Diophantine approximation}, as introduced by Khintchine \cite{Khi1926} and Jarn\'ik \cite{Jar1938}. In this article, we concern ourselves with the very general setting of weighted Diophantine exponents, uniform or otherwise.

\begin{defn}\label{def-1}%add names in full letters % I'm happy with the names of the exponents being included, but I've re-edited because some of this was ungrammatical.
Let $m,n \in \bN$ denote dimensions, and let $\bs=(s_1,\ldots,s_m) \in \bR_{>0}^m$ and $\br=(r_1,\ldots, r_n)\in \bR_{>0}^n$ be weights, that is
\begin{equation*}
\sum_{i=1}^{m}s_m = \sum_{j=1}^{n}r_n = 1.
\end{equation*}
Write
\begin{equation*}
  \|(x_1,\ldots,x_m)\|_{\bs}=\max_{1\le i\le m}|x_i|^{\frac{1}{s_i}},\qquad \|(y_1,\ldots,y_n)\|_{\br}=\max_{1\le j \le n}|y_j|^{\frac{1}{r_j}}.
\end{equation*}
Let $A\in M_{m\times n}(\bR)$ and $\btet\in \bR^m$. The \emph{inhomogeneous weighted exponent}, denoted $\omega_{\bs,\br}(A,\btet)$, is the supremum of the real numbers $\omega$ for which, for some arbitrarily large real numbers $T$, the inequalities
\begin{equation}\label{equ-def-w}
  \|\bq\|_{\br}<T, \qquad \|A\bq-\bp-\btet\|_{\bs}<T^{-\omega}
\end{equation}
have a  solution $(\bp,\bq)\in \bZ^m\times(\bZ^n\setminus \{\bzero\})$.
The \emph{uniform inhomogeneous weighted exponent}, denoted $\hat{\omega}_{\bs,\br}(A,\btet)$, is the supremum of the real numbers $\hat{\omega}$ for which, for \textbf{all} sufficiently large real numbers $T$, the inequalities
\begin{equation*}
  \|\bq\|_{\br}<T, \qquad \|A\bq-\bp-\btet\|_{\bs}<T^{-\hat{\omega}}
\end{equation*}
have a solution $(\bp,\bq)\in \bZ^m\times(\bZ^n\setminus \{\bzero\})$. Moreover, define the homogeneous exponents
\[
\omega_{\bs, \br}(A) = \omega_{\bs, \br}(A, \bzero), \qquad \hat \omega_{\bs, \br}(A) = \hat \omega_{\bs, \br}(A,\bzero).
\]
Finally, in the unweighted case $\bs=(1/m,\ldots, 1/m)$ and $\br=(1/n,\ldots, 1/n)$, write $\omega(A,\btet)$ and $\hat{\omega}(A,\btet)$ for $\omega_{\bs,\br}(A,\btet)$ and $\hat{\omega}_{\bs,\br}(A,\btet)$, respectively.
\end{defn}

\begin{remark} \label{normalisation} Diophantine exponents are allowed to equal $+\infty$. We have normalised in such a way that Dirichlet's approximation theorem delivers the lower bound $1$ for the exponent, as in \cite{BV2010}; it is more common to normalise so that Dirichlet's approximation theorem delivers the lower bound $n/m$. This value is said to be \emph{critical}, as it is attained by almost all matrices $A$, especially in the context of \S \ref{S2}.
\end{remark}

Exponents of multiplicative Diophantine approximation can be similarly defined.

\begin{defn}
Let $A\in M_{m\times n}(\bR)$ and $\btet\in \bR^m$. The \emph{inhomogeneous multiplicative exponent}, written $\omega^{\times}(A,\btet)$, is the supremum of the real numbers $\omega$ for which, for some arbitrarily large real numbers $T$, the inequalities
\begin{equation*}
  \Pi_{+}(\bq)<T, \qquad \Pi (A\bq-\bp-\btet)<T^{-\omega}
\end{equation*}
have a solution $(\bp,\bq)\in \bZ^m\times(\bZ^n\setminus \{\bzero\})$, where
\[\Pi_{+}(\bq)=\prod_{j=1}^{n}\max\{1, |q_j|\},\qquad \Pi(\by)=\prod_{i=1}^{m}|y_i| \]
with $\by=(y_1,\ldots, y_m)$.
The \emph{uniform inhomogeneous multiplicative exponent}, written $\hat{\omega}^{\times}(A,\btet)$, is the supremum of the real numbers $\hat{\omega}$ for which, for \textbf{all} sufficiently large real numbers $T$, the inequalities
\begin{equation*}
  \Pi_{+}(\bq)<T, \qquad \Pi (A\bq-\bp-\btet)<T^{-\hat{\omega}}
\end{equation*}
have a solution $(\bp,\bq)\in \bZ^m\times(\bZ^n\setminus \{\bzero\})$. The homogeneous multiplicative exponents are
\[
\omega^{\times}(A) := \omega^\times(A,\bzero), \qquad \hat{\omega}^{\times}(A) := \hat{\omega}^\times(A,\bzero).
\]
\end{defn}

\begin{remark}\label{rem-1}
 It follows directly from the definitions that for any weights $(\bs,\br)$ and any $(A,\btet)$ we have
\[
\omega^{\times}(A,\btet)\ge \omega_{\bs, \br}(A,\btet),
\qquad \hat{\omega}^{\times}(A,\btet)\ge \hat{\omega}_{\bs, \br}(A,\btet),
\]
and in particular
\[ \omega^{\times}(A)\ge \omega_{\bs, \br}(A), \qquad \hat{\omega}^{\times}(A)\ge \hat{\omega}_{\bs, \br}(A). \]
\end{remark}

It is well-known that, for  $A \in M_{m \times n}(\bR)$, the Diophantine exponent $\omega(A)$ and that of its transpose $\omega({}^tA)$ are related by \emph{transference inequalities}. This was first observed by Khintchine \cite{Khi1926} in the $n=1$ (simultaneous approximation) or $m=1$ (dual approximation) cases. The following generalisation is due to Dyson \cite[Theorem 4]{Dyson}, see also \cite[Chapter 6, \S 45, Theorem 8]{GL} and \cite[Chapter IV, \S 5]{Schmidt}.

\begin{thm} \label{DysonThm}
  Let $A \in M_{m \times n}(\bR)$. Write $\omega=\omega(A)$ and ${}^t\omega=\omega({}^t{A})$. Then
  \[
{}^t\omega \ge \frac{n\omega+m-1}{(n-1)\omega+m} \quad\text{ and }\quad \omega\ge \frac{m \: {}^t\omega+n-1}{(m-1){}^t\omega+n}.
\]
  In particular, we have $\omega=1$ if and only if ${}^t\omega=1$. The inequalities also hold for the uniform exponents with $\omega=\hat{\omega}(A)$ and ${}^t\omega=\hat{\omega}({}^t{A})$.
\end{thm}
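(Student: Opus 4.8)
The plan is to argue in the geometry of numbers: translate both exponents into the behaviour of the successive minima of a one-parameter family of boxes and of its polar family, and then combine Minkowski's second theorem with Mahler's transference inequality between the successive minima of a convex body and those of its polar. Set $d=m+n$ and attach to $A\in M_{m\times n}(\bR)$ the unimodular lattice
\[
\Lambda_A=\{(A\bq-\bp,\bq):\bp\in\bZ^m,\ \bq\in\bZ^n\}\subseteq\bR^{m+n},
\]
whose dual lattice $\Lambda_A^*=\{(\bx,\by-{}^tA\bx):\bx\in\bZ^m,\ \by\in\bZ^n\}$ is carried onto $\Lambda_{{}^tA}$ by the isometry of $\bR^{m+n}$ interchanging the two coordinate blocks (up to a sign).

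Unwinding Definition~\ref{def-1} one sees that $\omega(A)\ge\omega$ exactly when, for arbitrarily large $T$, the box
\[
B_A(T)=[-T^{-\omega/m},T^{-\omega/m}]^m\times[-T^{1/n},T^{1/n}]^n
\]
contains a nonzero point of $\Lambda_A$, and $\hat\omega(A)\ge\omega$ exactly when this holds for all large $T$; the corresponding statements for ${}^tA$ are read off from $\Lambda_A^*\cong\Lambda_{{}^tA}$ together with the polar body $B_A(T)^\circ$, which is comparable, up to a factor depending only on $d$, to the box $[-T^{\omega/m},T^{\omega/m}]^m\times[-T^{-1/n},T^{-1/n}]^n$.

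The heart of the argument is the following deduction, carried out for every $T$ with $\lambda_1(B_A(T),\Lambda_A)\le1$; abbreviate $\lambda_i=\lambda_i(B_A(T),\Lambda_A)$. Since $\vol B_A(T)=2^{m+n}T^{1-\omega}$ and $\Lambda_A$ is unimodular, Minkowski's second theorem gives $\lambda_1\cdots\lambda_d\gg_d T^{\omega-1}$, and combining this with $\lambda_1\le1$ and $\lambda_i\le\lambda_d$ for all $i$ yields
\[
\lambda_d^{\,d-1}\ \ge\ \lambda_1\cdots\lambda_d\ \gg_d\ T^{\omega-1},\qquad\text{hence}\qquad \lambda_d\ \gg_d\ T^{(\omega-1)/(d-1)}.
\]
It is essential \emph{not} to also use the bound $\lambda_i\le T^{\omega/m}$ valid for $i\le m$ (which comes from the integer sublattice $\bZ^m\times\{\bzero\}\subseteq\Lambda_A$): discarding that information is precisely what makes the estimate sharp once $m\ge2$. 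Mahler's inequality $\lambda_d(B_A(T),\Lambda_A)\cdot\lambda_1(B_A(T)^\circ,\Lambda_A^*)\le d!$ now produces a nonzero point $(\bx,\by-{}^tA\bx)\in\Lambda_A^*$ with
\[
\|\bx\|_\infty\ \ll_d\ T^{\omega/m-(\omega-1)/(d-1)},\qquad \|{}^tA\bx-\by\|_\infty\ \ll_d\ T^{-1/n-(\omega-1)/(d-1)},
\]
where $\bx\ne\bzero$, since otherwise the second bound would force $\by=\bzero$ as soon as $T$ is large.

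To finish, set $S=T^{\omega-m(\omega-1)/(d-1)}$, so that $\|\bx\|_\infty\ll_d S^{1/m}$; a direct computation with the exponents shows that the second bound becomes $\|{}^tA\bx-\by\|_\infty\ll_d S^{-{}^t\omega/n}$ with ${}^t\omega=\frac{n\omega+m-1}{(n-1)\omega+m}$, and by Definition~\ref{def-1} for the matrix ${}^tA\in M_{n\times m}(\bR)$ this gives exactly $\omega({}^tA)\ge\frac{n\omega+m-1}{(n-1)\omega+m}$. The second inequality of Theorem~\ref{DysonThm} is the same statement with $A$ and ${}^tA$ interchanged, and the $\hat\omega$ assertions follow verbatim, since the quantifier ``for arbitrarily large $T$'' versus ``for all large $T$'' survives every step and $T\mapsto S$ is an increasing bijection of a half-line onto a half-line. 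The only genuinely fiddly point is the bookkeeping: the passage between the weighted-norm normalisation of Definition~\ref{def-1} and the boxes $B_A(T)$, and the tracking of the $d$-dependent constants in Minkowski's and Mahler's theorems. All the substance is in the inequality $\lambda_d^{\,d-1}\ge\lambda_1\cdots\lambda_d$ used with $\lambda_1\le1$ — that is, in using no more information than one should.
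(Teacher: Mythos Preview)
Your argument is correct and is essentially the same as the paper's: both proofs reduce to the geometry of numbers, combining Minkowski's second theorem with Mahler's inequality between the successive minima of a body and its polar (Cassels, Ch.~VIII, Theorem~VI). The only differences are cosmetic---the paper rescales so as to work with the fixed cube $\cB=[-1,1]^{m+n}$ and a varying lattice, applying Mahler first and then Minkowski on the dual side, whereas you keep the lattice fixed, vary the box, and apply Minkowski on the primal side before invoking Mahler; these are equivalent orderings of the same two ingredients.
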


We extend Dyson's transference inequalities to the weighted case. Set
\begin{equation*}
  \rho_{\bs}=\max_{1\le i\le m}s_i,\quad \delta_{\bs}=\min_{1\le i\le m}s_i, \quad\text{ and }\quad \rho_{\br}=\max_{1\le j\le n}r_j,\quad \delta_{\br}=\min_{1\le j\le n}r_j.
\end{equation*}
\begin{thm}\label{thm-khin}
 Let $A \in M_{m \times n}(\bR)$. Write $\omega=\omega_{\bs,\br}(A)$ and ${}^t\omega=\omega_{\br, \bs} ({}^t{A})$. Then
 \begin{equation*}
   {}^t\omega\ge \frac{(m+n-1)\rho_{\bs}\rho_{\br}(\delta_{\br}+\delta_{\bs}\omega)
   +\rho_{\bs}\delta_{\br}\delta_{\bs}(\omega-1)}{(m+n-1)\rho_{\bs}\rho_{\br}(\delta_{\br}
   +\delta_{\bs}\omega)-\rho_{\br}\delta_{\br}\delta_{\bs}(\omega-1)}
 \end{equation*}
 and
 \begin{equation*}
   \omega\ge \frac{(m+n-1)\rho_{\bs}\rho_{\br}(\delta_{\bs}+\delta_{\br}{}^t\omega)
   +\rho_{\br}\delta_{\br}\delta_{\bs}({}^t\omega-1)}{(m+n-1)\rho_{\bs}\rho_{\br}(\delta_{\bs}
   +\delta_{\br}{}^t\omega)-\rho_{\bs}\delta_{\br}\delta_{\bs}({}^t\omega-1)}.
 \end{equation*}
 In particular, $\omega=1$ if and only if ${}^t\omega=1$. The inequalities also hold for the uniform exponents with $\omega=\hat{\omega}_{\bs,\br}(A)$ and ${}^t\omega=\hat{\omega}_{\br, \bs}({}^t{A})$.
\end{thm}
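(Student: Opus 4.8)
The plan is as follows. The second inequality is obtained from the first on interchanging $A\leftrightarrow{}^tA$, $\bs\leftrightarrow\br$, $m\leftrightarrow n$ (note that $m+n-1$ and the hypotheses are invariant, and $\omega_{\br,\bs}({}^t({}^tA))=\omega_{\bs,\br}(A)$), so it is enough to prove the bound for ${}^t\omega$; and one may assume $\omega>1$, the case $\omega=1$ being Dirichlet's theorem for ${}^tA$. The homogeneous and uniform statements run through the same argument, reading "for arbitrarily large $T$" in the one case and "for all sufficiently large $T$" in the other. Fix $\omega'\in(1,\omega)$; I would bound ${}^t\omega$ below by the asserted expression with $\omega'$ in place of $\omega$ and then let $\omega'\to\omega$. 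Write $d=m+n$.

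By hypothesis there is, for arbitrarily large $t$, a \emph{primitive} $\bv^*=(\bp^*,\bq^*)\in\bZ^m\times\bZ^n$ with $\bq^*\ne\bzero$, $|q_j^*|\le e^{r_jt}$ for each $j$, and $|(A\bq^*-\bp^*)_i|\le e^{-s_i\omega' t}$ for each $i$ (dividing out a gcd only improves a solution). Set $R=e^{\sigma t}$ for a scale $\sigma>0$ to be chosen, and consider
\[
\cK=\bigl\{(\bc,\bb)\in\bR^n\times\bR^m:\ |({}^tA\bb-\bc)_j|\le R^{-\tau r_j}\ (j\le n),\ |b_i|\le R^{s_i}\ (i\le m)\bigr\},
\]
whose nonzero integer points are, for $R$ large, exactly the ${}^tA$-solutions of height $R$ with exponent $\tau$; its defining linear map is unimodular, so $\vol\cK=2^dR^{1-\tau}$. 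The point of the primitive vector $\bv^*$ is the integer-valued form $\Phi(\bc,\bb)=\langle\bb,\bp^*\rangle-\langle\bc,\bq^*\rangle$ on $\bZ^d$, which satisfies $\Phi(\bc,\bb)=\langle{}^tA\bb-\bc,\bq^*\rangle-\langle\bb,A\bq^*-\bp^*\rangle$; on $\cK$ this forces
\[
\sup_{\cK}\Phi\ \le\ \sum_{j}e^{r_jt}R^{-\tau r_j}\ +\ \sum_{i}e^{-s_i\omega' t}R^{s_i},
\]
the second sum being small precisely because $\bv^*$ is a good approximation.

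Restrict to $H=\{\Phi=0\}$. As $\bv^*$, hence $\bn:=(-\bq^*,\bp^*)$, is primitive, $\bZ^d\cap H$ has covolume $|\bn|$ in $H$; and since the central slice of a symmetric convex body is the largest among parallel slices (Brunn), $\vol_{d-1}(\cK\cap H)\ge\vol\cK/w$ with $w=2(\sup_\cK\Phi)/|\bn|$ the width of $\cK$ along $\bn$. Minkowski's first theorem in $H$ therefore produces a nonzero point of $\bZ^d\cap H$ inside $\cK$ — necessarily with $\bb\ne\bzero$, so an honest ${}^tA$-solution — as soon as $\vol_{d-1}(\cK\cap H)\ge 2^{d-1}|\bn|$, for which it suffices that $\vol\cK\ge 2^d\sup_\cK\Phi$. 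Note that $|\bn|$ cancelled: sectioning by this one hyperplane attached to a good $A$-solution, rather than counting lattice points in $\bR^d$ directly (which would only yield $\tau\le 1$), is what gains, and plays here the role of Dyson's compound convex bodies.

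Inserting $R=e^{\sigma t}$ and comparing exponents of $t$, the condition $\vol\cK\ge 2^d\sup_\cK\Phi$ holds for all large $t$ provided $\sigma(1-\tau)>r_j(1-\tau\sigma)$ for all $j$ and $\sigma(1-\tau)>s_i(\sigma-\omega')$ for all $i$. No $\tau>1$ is admissible unless $1<\sigma<\omega'$, and there, since $r_j\in[\delta_{\br},\rho_{\br}]$ and $s_i\in[\delta_{\bs},\rho_{\bs}]$, the tightest constraints are the instances $r_j=\delta_{\br}$, $s_i=\delta_{\bs}$, i.e.\ $\tau<(\sigma-\delta_{\br})/(\sigma(1-\delta_{\br}))$ and $\tau<(1-\delta_{\bs})+\delta_{\bs}\omega'/\sigma$. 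The first increases and the second decreases in $\sigma$, so the optimal $\sigma$ equates them; solving and substituting back,
\[
\tau=\frac{\delta_{\br}+\delta_{\bs}\omega'-\delta_{\bs}\delta_{\br}}{\delta_{\br}+\delta_{\bs}\omega'-\delta_{\bs}\delta_{\br}\omega'}
\]
is admissible, so ${}^t\omega\ge(\delta_{\br}+\delta_{\bs}\omega-\delta_{\bs}\delta_{\br})/(\delta_{\br}+\delta_{\bs}\omega-\delta_{\bs}\delta_{\br}\omega)$ on letting $\omega'\to\omega$. In the unweighted case this equals $(n\omega+m-1)/((n-1)\omega+m)$, so Theorem~\ref{DysonThm} is recovered; in general an elementary estimate shows it is $\ge$ the right-hand side in the statement, and the equivalence $\omega=1\iff{}^t\omega=1$ then follows from the two inequalities. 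The main obstacles I expect are: noticing that $|\bn|$ cancels (hence that one should section by the hyperplane of a single good $A$-solution); and carrying out the optimisation over $\sigma$ cleanly, where the extreme weights $\rho_{\bs},\delta_{\bs},\rho_{\br},\delta_{\br}$ control how dilations of a weighted box translate the exponents $t$ and $\tau$.
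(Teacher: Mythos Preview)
Your argument is correct and in fact yields a sharper inequality than the one stated in the paper. The paper's route is rather different: it encodes the $A$-approximation as a bound $\mu_1(L,\cB)\le e^{-t_0}$ for a suitable diagonal rescaling $L$ of the lattice $\begin{pmatrix}\bI_m & A\\ 0 & \bI_n\end{pmatrix}\bZ^{m+n}$, passes to the dual via $\mu_{m+n}(L^*,\cB)\ge e^{t_0}$, and then invokes Minkowski's \emph{second} theorem to obtain $\mu_1(L^*,\cB)\ll e^{-t_0/(m+n-1)}$. Converting this last bound on a box $e^{-t_0/(m+n-1)}\cB$ back into weighted inequalities for ${}^tA$ costs a factor $\rho_{\bs}^{-1}$ on one side and $\rho_{\br}^{-1}$ on the other, which is why $\rho_{\bs},\rho_{\br}$ enter the paper's final formula alongside $\delta_{\bs},\delta_{\br}$.

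Your hyperplane-section argument (Brunn plus Minkowski's \emph{first} theorem in $H$) avoids that conversion loss: the optimisation over $\sigma$ and $\tau$ involves only $\delta_{\bs}$ and $\delta_{\br}$, and the resulting bound
\[
{}^t\omega\ \ge\ \frac{\delta_{\br}+\delta_{\bs}\omega-\delta_{\bs}\delta_{\br}}{\delta_{\br}+\delta_{\bs}\omega-\delta_{\bs}\delta_{\br}\omega}
\]
does dominate the paper's. The ``elementary estimate'' you allude to is genuinely elementary: writing $X=\delta_{\br}+\delta_{\bs}\omega$, $a=\delta_{\bs}$, $b=\delta_{\br}$, $P=\rho_{\bs}$, $Q=\rho_{\br}$, $K=m+n-1$, the comparison reduces to
\[
b\bigl[KPQ-P-Q(1-a)\bigr]+a\omega\bigl[KPQ-Q-P(1-b)\bigr]\ \ge\ 0,
\]
and both brackets are nonnegative because $1-a\le(m-1)P$ and $1-b\le(n-1)Q$ (each weight vector sums to $1$), giving $KPQ-P-Q(1-a)\ge P(nQ-1)\ge 0$ and symmetrically for the second bracket. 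It would strengthen your write-up to include this one-line verification rather than leave it as an assertion. The two approaches agree exactly in the unweighted case (where $\rho=\delta$), but in the genuinely weighted regime your bound can be strictly larger; for instance with $m=2$, $n=1$, $\bs=(1/3,2/3)$ and $\omega\to\infty$, you obtain ${}^t\omega=\infty$ whereas the paper's formula gives only ${}^t\omega\ge 6$.
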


%\noindent Cassels observed that the natural inhomogeneous analogue of Dirichlet's approximation theorem is false %\cite[Ch. 3, Theorem III]{Cas1}. It is also classical knowledge that inhomogeneous problems in Diophantine %approximation are connected to homogeneous ones via transference theorems \cite{Cas1}. The reason is that the %difference between two inhomogeneous approximations constitutes a homogeneous one.\\

Our next result concerns the following elegant result due to Bugeaud and Laurent \cite{BL2005}.
\begin{thm}[Bugeaud--Laurent] \label{BL}
For $A \in M_{m \times n}(\bR)$ and $\btet \in \bR^n$ we have
\[
\omega(\prescript{t}{}{A}, \btet) \ge \hat{\omega}(A)^{-1}, \qquad \hat{\omega}(\prescript{t}{}{A},\btet) \ge \omega(A)^{-1},
\]
with equality for Lebesgue-almost all $\btet$.
\end{thm}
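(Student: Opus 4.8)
\emph{Strategy.} The plan is to translate the four quantities into the geometry of numbers and to exploit the duality between the lattice of $A$ and that of $\prescript{t}{}{A}$. Write $d=m+n$, set $\Lambda_A=\left(\begin{smallmatrix}I_m & A\\ 0 & I_n\end{smallmatrix}\right)\bZ^{d}$ and $\Lambda_{\prescript{t}{}{A}}=\left(\begin{smallmatrix}I_n & \prescript{t}{}{A}\\ 0 & I_m\end{smallmatrix}\right)\bZ^{d}$, let $\bxi_{\btet}\in\bR^{d}$ be the point whose ``error'' coordinates are $-\btet$ and whose ``denominator'' coordinates vanish, and let $(g_t)$, $(h_t)$ be the diagonal one-parameter subgroups of $\SL_{d}(\bR)$ expanding the error block and contracting the denominator block, for $A$ and for $\prescript{t}{}{A}$ respectively. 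The first step is the Dani dictionary: $\omega(A)$ and $\hat\omega(A)$ are explicit increasing functions of $\limsup_{t\to\infty}\bigl(-\tfrac1t\log\lambda_1(g_t\Lambda_A)\bigr)$ and of $\liminf_{t\to\infty}\bigl(-\tfrac1t\log\lambda_1(g_t\Lambda_A)\bigr)$ respectively, where $\lambda_1$ is the first minimum for $\|\cdot\|_{\infty}$; and $\omega(\prescript{t}{}{A},\btet)$, $\hat\omega(\prescript{t}{}{A},\btet)$ are, via the same functions, the $\limsup$ and $\liminf$ of $-\tfrac1t\log\mu_t$, where $\mu_t$ is the inhomogeneous minimum of the grid $h_t(\Lambda_{\prescript{t}{}{A}}+\bxi_{\btet})$ taken over the lattice points lying off the subspace on which the denominator coordinates vanish. (One must track the normalising constants carefully, and remember that---unlike for homogeneous exponents---these quantities may fall below the critical value, so that $\mu_t$ can be far larger than $1$.)

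\emph{The inequalities.} The algebraic input is that $\Lambda_{\prescript{t}{}{A}}$ coincides with the dual lattice $\Lambda_A^{*}$ after a permutation and sign change of coordinates, under which $h_t$ is carried to $g_{-t}$; hence $h_t\Lambda_{\prescript{t}{}{A}}\cong(g_t\Lambda_A)^{*}$. For any unimodular $\Gamma\subset\bR^{d}$ and any shift, the inhomogeneous minimum is at most the covering radius $\rho(\Gamma)$, which satisfies $\rho(\Gamma)\lesssim_d\lambda_d(\Gamma)$, and $\lambda_d(\Gamma)\asymp_d 1/\lambda_1(\Gamma^{*})$ by Mahler's duality for successive minima. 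Taking $\Gamma=h_t\Lambda_{\prescript{t}{}{A}}$ gives $\mu_t\lesssim_d 1/\lambda_1(g_t\Lambda_A)$ for all $t$; inserting this into the dictionary---a $\limsup$ in $t$ on the inhomogeneous side against a $\liminf$ on the homogeneous side, and vice versa---yields exactly $\omega(\prescript{t}{}{A},\btet)\ge\hat\omega(A)^{-1}$ and $\hat\omega(\prescript{t}{}{A},\btet)\ge\omega(A)^{-1}$. The interchange of the uniform and ordinary exponents is forced precisely by this $\limsup/\liminf$ duality.

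\emph{Equality for almost every $\btet$.} It remains to prove the matching upper bounds almost everywhere, i.e.\ the converse estimate $\mu_t\gtrsim_d\lambda_d(h_t\Lambda_{\prescript{t}{}{A}})$ for generic $\btet$. The key lemma is: for any unimodular $\Gamma\subset\bR^{d}$ and $0<\eps<1$, the set of $\bxi\in\bR^{d}/\Gamma$ with $\dist_{\infty}(\bxi,\Gamma)<\eps\,\lambda_d(\Gamma)$ has measure $\lesssim_d\eps$. This follows by projecting $\bR^{d}$ along the span of the first $d-1$ minimal directions of $\Gamma$: in that quotient line $\Gamma$ projects to a rank-one lattice $c\bZ$ with $c\asymp_d\lambda_d(\Gamma)$ by Minkowski's second theorem, so the projected bad set is an $O_d(\eps)$-fraction of the corresponding circle, and Fubini finishes it. Granting the lemma: if $\omega(\prescript{t}{}{A},\btet)$ exceeds $\hat\omega(A)^{-1}$ by a fixed amount, then, since $-\tfrac1t\log\lambda_1(g_t\Lambda_A)$ stays within $o(1)$ of its $\liminf$ for all large $t$, one must have $\mu_t<e^{-\kappa t}\lambda_d(h_t\Lambda_{\prescript{t}{}{A}})$ for infinitely many $t$, with $\kappa>0$ depending on that amount; by the lemma the set of such $\btet$ at time $t$ has measure $\lesssim_d e^{-\kappa t}$, which is summable, so after discretising $t\in\bN$ (and using that $\log\lambda_1(g_t\Lambda)$ and $\log\mu_t$ are Lipschitz in $t$) the Borel--Cantelli lemma forces $\omega(\prescript{t}{}{A},\btet)\le\hat\omega(A)^{-1}$ for almost every $\btet$. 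The bound $\hat\omega(\prescript{t}{}{A},\btet)\le\omega(A)^{-1}$ almost everywhere is obtained identically, with $\limsup$ replacing $\liminf$ and with a sequence $t_j\to\infty$ realising it.

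\emph{Main obstacle.} I expect the difficulty to lie not in the inequalities---which drop out of Mahler duality once the dictionary is in place---but in the metric part: setting up the Dani dictionary precisely together with the ``nonzero denominator'' constraint, and, most delicately, checking that the $\btet$-perturbation, which moves only the $n$ error coordinates, is transverse to the hyperplane spanned by the shortest $d-1$ vectors of $h_t\Lambda_{\prescript{t}{}{A}}$, so that the $O_d(\eps)$ measure bound is valid in the $n$-dimensional $\btet$-space and not merely on the full $d$-dimensional torus. It is this transversality---which one verifies through the duality rather than by soft arguments---that makes a generic $\btet$ a genuine deep hole of the grid $h_t(\Lambda_{\prescript{t}{}{A}}+\bxi_{\btet})$.
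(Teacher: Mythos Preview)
Your argument for the two inequalities is correct and coincides with the paper's: Lemma~\ref{geo-of-num} is precisely the statement that the covering radius of a lattice is $\lesssim_d \lambda_d \asymp_d 1/\lambda_1$ of the dual, and Lemma~\ref{lem-psi} is the static form of your bound $\mu_t \lesssim_d 1/\lambda_1(g_t\Lambda_A)$. The dynamical framing adds nothing and costs nothing here.

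For the almost-everywhere equality the paper takes a different and sharper route. Rather than estimating the measure of $\{\btet: \mu_t < \eps\lambda_d\}$ on the full torus and then trying to slice down to the $n$-dimensional $\btet$-space, the paper fixes a sequence $(\bq_k)_{k\ge 1}$ of best approximation denominators for $A$ (so $\bq_k\in\bZ^n$ and $M_k=\|A\bq_k-\bp_k\|$ is minimal) and, using the adjoint identity
\[
\langle \bq_k,\btet\rangle \equiv \langle A\bq_k-\bp_k,\bp\rangle - \langle \bq_k,\,{}^tA\bp-\bq-\btet\rangle \pmod 1,
\]
shows that any $\btet$ with $\omega({}^tA,\btet)$ too large must satisfy $\dist(\langle \bq_k,\btet\rangle,\bZ)<Y_k^{-\eta}$ for infinitely many $k$. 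This is a \emph{one-dimensional} linear condition written directly in the $\btet$-coordinates, so the measure bound $\lesssim Y_k^{-\eta}$ is immediate, and geometric growth of $Y_k$ (Lemma~\ref{lem-geo-grow}) gives Borel--Cantelli. In your language, $\bq_k$ is exactly the error-block component of the shortest vector of $(h_t\Lambda_{{}^tA})^*$, so the paper has in effect chosen the projection direction for you and placed it inside $\bR^n$ from the outset.

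This is where your proposal has a genuine gap. Your key lemma bounds the measure of the $\eps\lambda_d$-neighbourhood of $\Gamma$ on the \emph{full} torus $\bR^d/\Gamma$, but the shift $\bxi_\btet$ ranges only over an $n$-plane. Pulling the bound back requires the angle between that $n$-plane and the hyperplane $H$ spanned by the first $d-1$ minima of $h_t\Lambda_{{}^tA}$ to be bounded below \emph{uniformly in $t$}; otherwise your $O_d(\eps)$ becomes $O_d(\eps/\sin\theta_t)$ and summability can fail. You flag this, but ``verifies through the duality'' is not a proof: the normal to $H$ is (up to bounded distortion) the dual short vector $(e^{-t/n}\bq_k,\,e^{t/m}(\bp_k-A\bq_k))$, and its error-plane component $e^{-t/n}\bq_k$ need not be comparable to its full length for every $t$ --- only for $t$ tuned to the best-approximation scale. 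Making this precise forces you to discretise along the sequence $Y_k$ rather than along $t\in\bN$, at which point you have reinvented the paper's argument. (Also: ``stays within $o(1)$ of its $\liminf$'' should read ``is at least its $\liminf$ minus $o(1)$''; you only need, and only have, the one-sided bound. And the degenerate case $\rank_\bZ({}^tA\bZ^m+\bZ^n)<m+n$ must be disposed of separately, as the paper does at the start of \S\ref{S5}.)
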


The proof in \cite{BL2005} fails to deliver a weighted version. Using a slightly different proof, we are able to establish the following more general assertion.

\begin{thm} \label{MainThm}
If $A \in M_{m \times n}(\bR)$ and $\btet \in \bR^n$ then
\begin{equation}\label{equ-main-ine}
  \omega_{\br, \bs}( \prescript{t}{}{A}, \btet) \ge \hat{\omega}_{\bs,\br}(A)^{-1},\qquad \hat{\omega}_{\br,\bs}( \prescript{t}{}{A},\btet) \ge \omega_{\bs,\br}(A)^{-1},
\end{equation}
with equality for Lebesgue-almost all $\btet$.
\end{thm}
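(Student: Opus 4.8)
The plan is to translate everything into the geometry of numbers of a one-parameter family of lattices and their polars. Write $d=m+n$, put $u_A=\left(\begin{smallmatrix} I_m & A\\ 0 & I_n\end{smallmatrix}\right)$ and $\Lam_A=u_A\bZ^d=\{(A\bq-\bp,\bq):\bp\in\bZ^m,\,\bq\in\bZ^n\}\subset\bR^m\times\bR^n$, and for $w\in\bR$ and $t\ge 0$ let $K^{(w)}_t$ be the box with $\bs$-radii $e^{-w s_it}$ and $\br$-radii $e^{r_jt}$; write $\lambda_i(\Lambda,B)$ for the $i$-th minimum of a lattice with respect to a symmetric convex body $B$. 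Setting $T=e^t$, a nonzero solution of the homogeneous system \eqref{equ-def-w} at $\btet=\bzero$, exponent $w$, scale $T$ is exactly a nonzero point of $\Lam_A\cap K^{(w)}_t$, so $\lambda_1(\Lam_A,K^{(w)}_t)<1$. Hence $w>\hat\omega_{\bs,\br}(A)$ forces $\lambda_1(\Lam_A,K^{(w)}_t)\ge 1$ for arbitrarily large $t$, and $w>\omega_{\bs,\br}(A)$ forces it for all large $t$; dually, writing $\bq_1,\bq_2,\dots$ for the best homogeneous approximations of $A$, with $\eta_k=\|A\bq_k-\bp_k\|_{\bs}$ and $Q_k=\|\bq_k\|_{\br}$, one has $\hat\omega_{\bs,\br}(A)=\liminf_k(-\log\eta_k)/\log Q_{k+1}$ and $\omega_{\bs,\br}(A)=\limsup_k(-\log\eta_k)/\log Q_k$, and the $Q_k$ grow at least geometrically. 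A direct computation identifies the polar lattice $\Lam_A^*$, after transposing the two coordinate blocks, with $\Lam_{\prescript{t}{}{A}}$ up to sign; identifies the polar of $K^{(w)}_t$, up to a factor depending only on $d$, with the box of $\bs$-radii $e^{w s_it}$ and $\br$-radii $e^{-r_jt}$; and identifies the inhomogeneous grid $\{(\prescript{t}{}{A}\bq-\bp-\btet,\bq)\}$ with $\Lam_A^*+\bx(\btet)$, where $\bx(\btet)$ has bounded coordinates, supported on the $\br$-block. Matching box shapes, ``$\omega_{\br,\bs}(\prescript{t}{}{A},\btet)\ge v$'' becomes, up to such harmless constants, ``$\Lam_A^*+\bx(\btet)$ meets $(K^{(1/v)}_s)^*$ for arbitrarily large $s$'', and the uniform version with ``for all large $s$''.

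Granting this, the inequalities \eqref{equ-main-ine} follow from two classical facts valid for an arbitrary symmetric convex body $B$, hence insensitive to the weights: the covering-radius bound $\mu(\Lambda,B^*)\le\tfrac d2\lambda_d(\Lambda,B^*)$, and Mahler's transference inequality $\lambda_1(\Lambda,B)\,\lambda_d(\Lambda^*,B^*)\ll 1$; together they give $\mu(\Lam_A^*,(K^{(w)}_t)^*)\ll\lambda_1(\Lam_A,K^{(w)}_t)^{-1}$. Now if $w>\hat\omega_{\bs,\br}(A)$ then $\lambda_1(\Lam_A,K^{(w)}_t)\ge 1$ for arbitrarily large $t$, so $\mu(\Lam_A^*,(K^{(w)}_t)^*)\ll 1$ there, and $\Lam_A^*+\bx$ meets a bounded dilate of $(K^{(w)}_t)^*$ for \emph{every} shift $\bx$, in particular for $\bx(\btet)$; by the dictionary $\omega_{\br,\bs}(\prescript{t}{}{A},\btet)\ge 1/w$, and $w\downarrow\hat\omega_{\bs,\br}(A)$ gives the first inequality. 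Taking instead $w>\omega_{\bs,\br}(A)$ the same bound holds for \emph{all} large $t$, so the conclusion sharpens to $\hat\omega_{\br,\bs}(\prescript{t}{}{A},\btet)\ge 1/w$, and $w\downarrow\omega_{\bs,\br}(A)$ gives the second. Thus both inequalities come from one mechanism, with the $\limsup$ or $\liminf$ of the homogeneous exponent of $A$ dictating that of the transposed inhomogeneous exponent.

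Because these inequalities hold for every $\btet$, the equality for a.e.\ $\btet$ amounts to proving the reverse inequalities for a.e.\ $\btet$, and this is the delicate part. Fix rational $v>\hat\omega_{\bs,\br}(A)^{-1}$; I would show that if $\btet$ admits a nonzero solution of the transposed inhomogeneous system at exponent $v$ and a scale lying in the activity window of the $k$-th best approximation $\bq_k$ of $A$, then $\btet$ lies in a slab $S_k$ perpendicular to $\bq_k$, of measure $\meas(S_k)\ll(\eta_k Q_{k+1}^{1/v})^{c_1}$ for some $c_1>0$. Indeed, throughout that window $\lambda_1(\Lam_A,K^{(1/v)}_s)$ is realised by $\bq_k$, so a refinement of the transference above --- one tracking the measure of the set of shifts $\bx$ for which $\Lam_A^*+\bx$ meets $(K^{(1/v)}_s)^*$, which is $\ll\lambda_1(\Lam_A,K^{(1/v)}_s)$ by a lattice-point count --- bounds the bad set over the window by the largest value of $\lambda_1(\Lam_A,K^{(1/v)}_s)$ there, which the best-approximation identities give as $\ll(\eta_k Q_{k+1}^{1/v})^{c_1}\ll Q_{k+1}^{-(\hat\omega_{\bs,\br}(A)-1/v)c_1+o(1)}$, using $\eta_k\le Q_{k+1}^{-\hat\omega_{\bs,\br}(A)+o(1)}$ and $1/v<\hat\omega_{\bs,\br}(A)$. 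Since the $Q_k$ grow geometrically, $\sum_k\meas(S_k)<\infty$, so by Borel--Cantelli $\btet\notin\limsup_k S_k$ for a.e.\ $\btet$; as every sufficiently good transposed inhomogeneous approximation puts $\btet$ in some $S_k$, this gives $\omega_{\br,\bs}(\prescript{t}{}{A},\btet)\le v$ for a.e.\ $\btet$, and $v\downarrow\hat\omega_{\bs,\br}(A)^{-1}$ yields $\omega_{\br,\bs}(\prescript{t}{}{A},\btet)=\hat\omega_{\bs,\br}(A)^{-1}$ a.e. For the uniform exponent one fixes $v>\omega_{\bs,\br}(A)^{-1}$ and notes that $\hat\omega_{\br,\bs}(\prescript{t}{}{A},\btet)>v$ forces $\btet\in\liminf_k S_k$; since $\meas(S_k)\to 0$ along the subsequence of indices $k$ with $\eta_k\le Q_k^{-\omega_{\bs,\br}(A)+o(1)}$, this $\liminf$ is null, giving $\hat\omega_{\br,\bs}(\prescript{t}{}{A},\btet)=\omega_{\bs,\br}(A)^{-1}$ a.e. The main obstacle is this measure estimate for the slabs $S_k$: in the weighted setting the homogeneous error $\|\cdot\|_{\bs}$ and the inhomogeneous tolerance $\|\cdot\|_{\br}$ must be balanced along each activity window with care, and it is precisely this balancing that the argument of \cite{BL2005} carries out only in the unweighted case.
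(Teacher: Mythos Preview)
Your proof of the inequalities \eqref{equ-main-ine} is essentially the paper's argument: Lemma~\ref{geo-of-num} is exactly the statement that $\lambda_1(\Lambda,\cR)>1$ forces every translate of a bounded dilate of $\cR^*$ to meet $\Lambda^*$, and Lemma~\ref{lem-psi} packages this as you do via $\Lambda_A$, $K^{(w)}_t$ and their polars. Your treatment of the $\btet\in\bZ^n$ subtlety (needing $\bp\ne\bzero$) is also the same as the paper's.

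For the almost-everywhere statement your overall architecture again matches the paper's \S\ref{S4}--\S\ref{S5}: weighted best approximations $\bq_k$ to $A$, geometric growth of $Q_k$, slabs $S_k$ orthogonal to $\bq_k$, Borel--Cantelli. The point at which your argument becomes a sketch is precisely the one you flag. You propose to obtain $\meas(S_k)\ll(\eta_k Q_{k+1}^{1/v})^{c_1}$ from ``the measure of the set of shifts $\bx$ for which $\Lam_A^*+\bx$ meets $(K^{(1/v)}_s)^*$, which is $\ll\lambda_1(\Lam_A,K^{(1/v)}_s)$''. But that measure lives on $\bR^{m+n}/\Lam_A^*$, whereas the relevant measure is on the $n$-dimensional slice $\{\bx(\btet):\btet\in[0,1]^n\}$; a small set in the former need not pull back to a small set in the latter, and a lattice-point count alone does not produce a \emph{slab perpendicular to $\bq_k$ in $\bR^n$}. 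What the paper does instead is the explicit transpose identity
\[
\langle\bq_k,\btet\rangle\equiv\langle A\bq_k-\bp_k,\bp\rangle-\langle\bq_k,\prescript{t}{}{A}\bp-\bq-\btet\rangle\pmod 1,
\]
and then bounds each term via the weighted ``H\"older'' inequality $|\langle\bx,\by\rangle|\le n\max\{\|\bx\|_\br^\delta\|\by\|_\br^\delta,\,\|\bx\|_\br\|\by\|_\br\}$ (and its $\bs$-analogue), with $\delta=\min(\delta_\bs,\delta_\br)$. This is exactly the ``balancing of $\|\cdot\|_\bs$ against $\|\cdot\|_\br$'' that you correctly identify as the obstacle, and it yields directly that $\dist(\langle\bq_k,\btet\rangle,\bZ)<Q_k^{-\eta}$ for an explicit $\eta>0$, hence a genuine slab in $[0,1]^n$ of measure $\ll Q_k^{-\eta}$. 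You should replace the lattice-shift heuristic by this identity; once you do, the rest of your Borel--Cantelli/$\liminf$ argument goes through as written. One further omission: the best-approximation machinery requires ${}^tA\bZ^m+\bZ^n$ to have rank $m+n$, and the paper disposes of the degenerate case separately (there the inhomogeneous exponents vanish off a null set of $\btet$); you should include this.
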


\noindent The inequalities \eqref{equ-main-ine} still hold for the natural multiplicative analogue, in view of Remark \ref{rem-1}, as was already noted in \cite{GM2018}. However, if $\omega^{\times}(A)> \omega_{\bs, \br}(A)$ then equality is never attained.

\begin{remark} The homogeneous case $\btet = \bzero$ can be quickly seen as follows. There is a weighted form of Dirichlet's approximation theorem (whose proof is essentially the same, using Minkowski's first convex body theorem) which implies that $\ome_{\bs,\br}(A) \ge \hat \ome_{\bs, \br}(A) \ge 1$. Hence
\[
\ome_{\br, \bs}(^tA) \ge 1 \ge \hat \ome_{\bs, \br}(A)^{-1}.
\]
\end{remark}

In the course of the proof of Theorem \ref{MainThm}, we extend the theory of best approximations to a weighted setting in \S \ref{S4}. As a by-product, we obtain results on the dimension of a certain set of inhomogeneous shifts for weighted $\eps$-badly approximable matrices, namely a weighted generalisation of \cite[Theorem 1.5]{BKLR}. 

\begin{thm}\label{thm-bklr}
Let $A \in M_{m \times n}(\bR)$ be a matrix for which the group $G := {^tA} \bZ^m + \bZ^n$ has rank 
\[
\rank_\bZ(G) = m + n.
\]
Let $(\bq_k)_{k \ge 1}$ be a sequence of weighted best approximations associated to ${}^tA$, and suppose  $\lim_{k\rightarrow \infty}\|\bq_k\|_{\br}^{1/k}=\infty$. For $\eps > 0$, define
\[
\Bad_{\br,\bs}^{\eps}({}^tA) = \{ \btet\in \bR^n: \liminf_{(\bp,\bq)\in (\bZ^{m}\setminus\{\bzero\})\times\bZ^n} \|\bp\|_{\bs}\|{}^tA\bp-\bq-\btet\|_{\br}\ge \eps \}.
\]
Then there exists $\eps = \eps(A)>0$ such that
\[
\dim_H \Bad_{\br,\bs}^{\eps}({}^tA)=n.
\]
%Moreover, the constant $\eps$ can be taken to be absolute if $\lim_{i\rightarrow \infty} %\|\bq_{i+1}\|_{\br}/\|\bq_{i}\|_{\br}=\infty$
\end{thm}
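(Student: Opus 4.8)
The plan is to pass to the torus $\bT^n=\bR^n/\bZ^n$, to recognise $\Bad_{\br,\bs}^{\eps}({}^tA)$ as the complement of a union of boxes, and then to carve out of that complement a Cantor-type set of Hausdorff dimension $n$, using the weighted theory of best approximations developed in \S\ref{S4}. The upper bound $\dim_H\Bad_{\br,\bs}^{\eps}({}^tA)\le n$ being trivial, only the matching lower bound requires work. For $\bx\in\bT^n$ and $\delta>0$ write $B_\br(\bx,\delta)$ for the box centred at $\bx$ with half-side $\delta^{r_j}$ in the $j$th coordinate; then $\meas B_\br(\bx,\delta)=2^n\delta$ since $\sum_j r_j=1$, and the inequality $\|{}^tA\bp-\bq-\btet\|_\br\ge\eps/\|\bp\|_{\bs}$ holding for every $\bq\in\bZ^n$ is exactly the statement that $\btet+\bZ^n\notin B_\br\bigl({}^tA\bp+\bZ^n,\ \eps/\|\bp\|_{\bs}\bigr)$. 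Unwinding Definition~\ref{def-1}, $\btet+\bZ^n$ represents a point of $\Bad_{\br,\bs}^{\eps}({}^tA)$ exactly when it avoids the box $B_\br({}^tA\bp+\bZ^n,\eps/\|\bp\|_{\bs})$ for every $\bp\in\bZ^m$ with $\|\bp\|_{\bs}$ sufficiently large; the finitely many $\bp$ with $\|\bp\|_{\bs}$ bounded play no role in the $\liminf$.

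Next I would feed in the structure from \S\ref{S4}. The rank hypothesis $\rank_\bZ(G)=m+n$ ensures that $\bp\mapsto{}^tA\bp+\bZ^n$ is injective on $\bZ^m$ and that the sequence $(\bq_k)$ of weighted best approximations to ${}^tA$ is infinite; put $T_k=\|\bq_k\|_{\bs}$, so $T_k\uparrow\infty$, and recall the elementary count $\#\{\bp\in\bZ^m:\|\bp\|_{\bs}<T\}\asymp_{\bs}T$ (since $\|\bp\|_{\bs}<T$ is equivalent to $|p_i|<T^{s_i}$ for all $i$, with $\sum_i s_i=1$). The construction is a nested sequence $\cC_0\supseteq\cC_1\supseteq\cdots$ of finite families of pairwise disjoint boxes in $\bR^n$, where each box of $\cC_k$ has scale $T_k$ (half-sides $\asymp_\br T_k^{-r_j}$) and consists of points avoiding $B_\br({}^tA\bp+\bZ^n,\eps/\|\bp\|_{\bs})$ for all $\bp$ with $\|\bp\|_{\bs}<T_k$. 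To pass from $\cC_{k-1}$ to $\cC_k$ one partitions each box of $\cC_{k-1}$ into its $\asymp_\br T_k/T_{k-1}$ sub-boxes of scale $T_k$ and discards those meeting the newly relevant boxes $B_\br({}^tA\bp+\bZ^n,\eps/\|\bp\|_{\bs})$ with $T_{k-1}\le\|\bp\|_{\bs}<T_k$. The crucial input from the weighted best-approximation theory is that, once $\eps$ is below a threshold $\eps_0(A)$, at least a proportion $\theta=\theta(A,\eps)>0$ of the sub-boxes of each box of $\cC_{k-1}$ survives this step: the best-approximation structure arranges the $\bp$ with $T_{k-1}\le\|\bp\|_{\bs}<T_k$ into structured families — roughly, arithmetic progressions $t\,\bq_{k-1}+\bp'$ in the short direction ${}^tA\bq_{k-1}$ — so that the associated boxes are aligned and together meet only a controlled fraction of the sub-boxes, whereas a union bound treating each such $\bp$ separately would cost a factor $\log(T_k/T_{k-1})$, which is unbounded. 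Fixing $\eps=\eps(A)\in(0,\eps_0(A))$, the set $K:=\bigcap_k\bigcup\cC_k$ is a nonempty closed subset of $\bR^n$, built so that at every level at least a proportion $\theta$ of the sub-boxes survives; and since $T_k\to\infty$, every point of $K$ avoids $B_\br({}^tA\bp+\bZ^n,\eps/\|\bp\|_{\bs})$ for all large $\bp$, so $K\subseteq\Bad_{\br,\bs}^{\eps}({}^tA)$.

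To compute $\dim_H K$ I would apply the mass distribution principle to the probability measure $\mu$ on $K$ that assigns equal mass to the boxes of each fixed level. Given a small ball $B$ of radius $\rho$, pick $k$ with $\rho\asymp T_k^{-\delta_\br}$ (so that $\rho$ is comparable to the \emph{longest} side of a scale-$T_k$ box); then $B$ meets at most $\asymp_\br\prod_j T_k^{r_j-\delta_\br}=T_k^{1-n\delta_\br}$ boxes of $\cC_k$, each of which carries $\mu$-mass at most $\asymp\theta^{-k}T_k^{-1}$ (there being $\gg\theta^kT_k$ boxes at level $k$), so that $\mu(B)\lesssim\theta^{-k}T_k^{-n\delta_\br}\asymp\theta^{-k}\rho^{n}$. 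Now the growth hypothesis $\lim_{k\to\infty}\|\bq_k\|_\br^{1/k}=\infty$ is equivalent to $\log T_k/k\to\infty$; since $\log T_k$ is a sum of $k$ positive increments whose average therefore tends to infinity, $k=o(\log T_k)$, whence $\theta^{-k}=e^{O(k)}=T_k^{o(1)}=\rho^{-o(1)}$ and $\mu(B)\lesssim\rho^{n-o(1)}$. The mass distribution principle then gives $\dim_H K\ge n-\eta$ for every $\eta>0$, so $\dim_H K=n$, as required. (One should first shrink the retained sub-boxes slightly so that consecutive levels are separated; this affects neither the counts nor the scales and makes the last estimate rigorous.)

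The step I expect to be the real obstacle is the assertion that only a controlled fraction of the sub-boxes is discarded at each level, i.e.\ that $\theta(A,\eps)>0$. This is exactly where the weighted theory of best approximations must do genuine work: one cannot afford a union bound over the $\asymp T_k/T_{k-1}$ lattice points involved, and the boxes $B_\br(\cdot,\delta)$ are genuinely anisotropic, so the scales and shapes at successive Cantor levels must be chosen compatibly with the side range $[\delta^{\rho_\br},\delta^{\delta_\br}]$, and care is needed when consecutive best-approximation scales $T_k/T_{k-1}$ happen to be small. Organising this via a weighted, higher-dimensional analogue of the Ostrowski expansion of $\btet$ relative to $({}^tA;\bq_1,\bq_2,\dots)$ — so that the $\eps$-bad condition becomes the admissibility of successive ``digits'' $t_k$, each ranging over $\asymp T_k/T_{k-1}$ values of which a fraction $O(\eps)$ is forbidden — is the natural framework, and is essentially the technical heart of the argument; checking that the rank condition and $(\bq_k)$ being infinite keep the construction non-degenerate, and the separation adjustment for the mass distribution principle, are routine.
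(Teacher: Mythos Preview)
Your global architecture is right and matches the paper: a Cantor-type construction with anisotropic boxes, the mass distribution principle, and the observation that the super-exponential growth hypothesis $\|\bq_k\|_{\br}^{1/k}\to\infty$ absorbs the geometric loss $\theta^{-k}$ so that the dimension comes out as $n$. But the step you yourself flag as ``the real obstacle''---showing that only a bounded fraction of sub-boxes is lost at each level---is a genuine gap, and your proposed fix is unlikely to work as stated. You want to avoid, at stage $k$, the boxes $B_\br({}^tA\bp,\eps/\|\bp\|_{\bs})$ for \emph{all} $\bp\in\bZ^m$ in a height window; as you note, a union bound costs $\log(T_k/T_{k-1})$, and your remedy is to organise these $\bp$'s into arithmetic progressions along $\bq_{k-1}$ via a ``weighted, higher-dimensional analogue of the Ostrowski expansion''. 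There are two problems. First, $\bq_{k-1}\in\bZ^n$ while $\bp\in\bZ^m$, so the decomposition $\bp=t\bq_{k-1}+\bp'$ is not even well-typed (relatedly, your scale $T_k=\|\bq_k\|_{\bs}$ does not type-check; the paper uses $Y_k=\|\bq_k\|_{\br}$). Second, and more seriously, there is no higher-dimensional Ostrowski expansion with the requisite properties: best approximations in dimension $>1$ do not obey a three-term recurrence, so the ``digit'' picture you invoke does not exist in general.

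The paper sidesteps this difficulty entirely by a transference trick you have not used. From the adjoint identity
\[
\langle \bq_k,\btet\rangle \equiv \langle A\bq_k-\bp_k,\bp\rangle - \langle \bq_k,{}^tA\bp-\bq-\btet\rangle \pmod 1,
\]
together with the best-approximation inequality $M_k<Y_{k+1}^{-1}$, one deduces that any $\btet$ satisfying $\dist(\langle\bq_{\phi(k)},\btet\rangle,\bZ)\ge\alpha$ for all $k$ (along a subsequence $\phi$ chosen so that consecutive $Y_{\phi(k)}$ grow by a fixed factor $R$) already lies in $\Bad_{\br,\bs}^{\eps}({}^tA)$ for an explicit $\eps=\eps(\alpha,R,m,n,\delta)$. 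The Cantor construction is then carried out not for your family of boxes indexed by \emph{all} $\bp$, but for the sets $\{\btet:\dist(\langle\bq_{\phi(k)},\btet\rangle,\bZ)<\alpha\}$---a \emph{single} hyperplane neighbourhood per level---so the surviving fraction is trivially $\ge 1-2\alpha-O(R^{-\delta_\br})>0$. This is the content of the paper's Theorem~\ref{thm-bklr-w}, and it is precisely the device that replaces the hard paragraph in your outline.
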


\noindent This is often referred to as \emph{twisted} diophantine approximation: the inhomogeneous shift is metric. The analogous problem for weighted badly approximable matrices has hitherto been investigated in \cite{HM2017, BM2017}; therein the object of study is $\displaystyle \Bad_{\br,\bs}({}^tA) := \cup_{\eps>0}\Bad_{\br,\bs}^{\eps}({}^tA)$. Our conclusion is stronger than the assertion that $\displaystyle \dim_H \Bad_{\br,\bs} (^t A) = n$.

\begin{remark} 
As noted in \cite{BKLR} and prior works, if the maximal rank condition is not met then $^tA\bx = \bzero$ possesses infinitely many solutions $\bx \in \bZ^m$, and the theory of best approximations breaks down. This excluded case is less interesting: the conclusion is still valid, since any $\btet \in \Bad_{\br, \bs}^\eps(^tA)$ would need to lie close to a discrete family of parallel hyperplanes in $\bR^n$. The rank condition will be discussed further in \S \ref{S5}, in a similar context.
\end{remark}

\bigskip

When $m=1$ or $n=1$, the matrix $A$ may be interpreted as a vector. In these special cases, the exponents defined above are the classical exponents of simultaneous and dual approximation; as discussed, these are related by Khintchine's transference inequality \cite{Khi1926}. In \cite{Laurent}, Laurent introduced intermediate exponents, refining the above quantities.

In a nutshell, the $d$th intermediate exponent quantifies a vector's proximity to $d$-dimensional rational subspaces. We can write down a $d$-dimensional linear subvariety $\cL$ of $\mathbb P_\bR^n$ using homogeneous coordinates. These span a $(d+1)$-dimensional subspace $V$ of $\bR^{n+1}$, and $\cL = \bP(V)$ is \emph{rational} if $V$ has a rational basis, that is, a basis
\[
\{ \bx_0, \ldots, \bx_d \} \subset \bQ^{n+1}.
\]
To define the height $H(\cL)$ of a $d$-dimensional rational linear subvariety, Schmidt \cite{SchH} began by using the Pl\"ucker embedding
\[
\mathrm{Gr}(d, \bP_\bR^n) \hookrightarrow \bP_\bR^{ {n+1 \choose d+1}-1}
\]
to obtain Grassmannian coordinates for $\cL$. Explicitly, this yields
\[
\cL \mapsto \bX := \bx_0 \wedge \cdots \wedge \bx_d \in \bP_\bR^{{n+1 \choose d+1}-1}
\]
and, since the basis is rational, we in fact have $\bX \in \bP_\bQ^{{n+1 \choose d+1}-1}$. The \emph{height} $H(\cL)$ of $\cL$ is the Weil height $|\bX|$ of $\bX$ (one rescales the projective coordinates to obtain a primitive integer vector, then evaluates the supremum norm). Schmidt did not work projectively; in this aspect we follow Laurent \cite{Laurent}.

The distance generalises the notion of \emph{projective distance} between two points \cite{Beresnevich, Philippon, Roy}. Recall that there is a unique defined inner product on $\Lam^t(\bR^{n+1})$ such that for any two multivectors 
\[
\bu = \bu_1 \wedge \cdots \wedge \bu_t \text{ and } \bv = \bv_1 \wedge \cdots \wedge \bv_t
\]
we have 
\begin{equation}\label{def-inner}
\langle \bu, \bv \rangle=  \det(\langle \bu_i,  \bv_j \rangle)_{i,j=1}^t,
\end{equation}
see \cite[\S 3]{Beresnevich}.
Then the \emph{Euclidean norm} $|\bu|$ of a multivector
\[
\bu = \bu_1 \wedge \cdots \wedge \bu_t
\]
is given by
\[
|\bu|^2 = |\det(\langle \bu_i,  \bu_j \rangle)_{i,j=1}^t|.
\]
 For $P,Q \in \bP_\bR^n$, the \emph{projective distance} between $P$ and $Q$ is
\[
\d(P,Q) = \frac{ | \bx \wedge \by| } {|\bx| \cdot | \by|},
\]
where $\bx$ and $\by$ are homogeneous coordinates for $P$ and $Q$ respectively. If $P \in \bP_\bR^n$ then
\begin{equation} \label{min}
\d (P, \cL) := \displaystyle \min_{Q \in \cL} \d(P, Q)
\end{equation}
is the least projective distance between $P$ and a point of $\cL$.

\begin{defn} \label{InterDef}
Let $d$ be an integer in the range $0\leq d \leq n-1$, and let $\balp \in \bR^n$. Define the \emph{$d$th ordinary exponent} ${\omega}_{d}(\balp)$  (resp. the \emph{$d$th uniform exponent} $\hat{\omega}_{d}(\balp)$) as the supremum of the real numbers $\omega$ for which there exist $d$-dimensional rational linear subspaces $\cL \subset \mathbb{R}^{n}$ such that
 \[  H(\cL)^{n \choose d} \leq T, \qquad (T \cdot \d([1:\balp], \cL))^{n \choose d+1} \leq T^{-\omega}  \]
for some arbitrarily large real numbers $T$  (resp. for every sufficiently large real number $T$).
\end{defn}

\noindent Here we have chosen the normalisation with powers ${n \choose d}$ and ${n \choose d+1}$ so that the exponent is generically $1$. The cases $d=0$ and $d=n-1$ correspond to the simultaneous and dual cases, respectively, see \cite{Laurent}.

In \cite{BV2010} an associated inhomogeneous Diophantine exponent $\omega_d(\balp, \btet)$ is posited but not defined, where
\[
\balp \in \bR^n, \qquad d \in \{ 0,1,\ldots, n-1 \}, \qquad \btet \in \bR^{n - d}.
\]
Then, the following transference inequality is conjectured \cite[Conjecture 3]{BV2010}.

\begin{conj}[Beresnevich--Velani] \label{BVconj}
Let $\balp \in \bR^n$ and $d \in \{0, \ldots , n-1\}$. Then for all $\btet \in \bR^{n-d}$ we have
\[
\omega_d(\balp ,\btet) \geq \cfrac{1}{\hat{\omega}_{n-1-d}(\balp)}.
\]
\end{conj}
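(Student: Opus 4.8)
The plan is first to pin down a definition of the inhomogeneous intermediate exponent $\omega_d(\balp,\btet)$ — this quantity is only ``posited'' in \cite{BV2010} — and then to show that, with the right definition, Conjecture \ref{BVconj} is a special case of Theorem \ref{MainThm}. The guiding principle is that approximating the point $[1:\balp]\in\bP_\bR^n$ by rational linear subvarieties is a weighted Diophantine approximation problem in disguise: the cases $d=0$ and $d=n-1$ are, respectively, simultaneous and dual approximation to $\balp$, and there Conjecture \ref{BVconj} is exactly Theorem \ref{BL} (the unweighted case of Theorem \ref{MainThm}) applied to the matrix $\balp$, respectively $\prescript{t}{}{\balp}$. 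For intermediate $d$ the reduction should survive, at the cost of genuinely nontrivial weights, and the work is to exhibit it.

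Concretely, I would work in the Grassmannian charts used to set up Definition \ref{InterDef}. A rational $d$-dimensional subvariety $\cL=\bP(W)$ corresponds to a rational $(d+1)$-dimensional subspace $W\subseteq\bR^{n+1}$; fixing a coordinate splitting $\bR^{n+1}=\bR^{d+1}\oplus\bR^{n-d}$ and parametrising the generic such $W$ as the graph of a rational matrix over $\bQ$, the pair $\bigl(H(\cL),\ \d([1:\balp],\cL)\bigr)$ becomes, after clearing denominators, a pair $\bigl(\|\bq\|_{\br},\ \|A_\balp\bq-\bp\|_{\bs}\bigr)$ for an explicit structured matrix $A_\balp\in M_{m'\times n'}(\bR)$ whose nonzero entries are $1$ and the coordinates of $\balp$ (built from scaled identity blocks $-\alpha_j I$ together with one column recording the remaining $\alpha_i$), with $m'=n-d$, and with weights $\bs,\br$ whose inhomogeneity is forced by the binomial normalisations $\binom{n}{d}$, $\binom{n}{d+1}$. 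The shift $\btet\in\bR^{n-d}=\bR^{m'}$ translates $\cL$ in the $n-d$ transverse directions, and this is precisely the inhomogeneous shift of Definition \ref{def-1}; I would then \emph{define} $\omega_d(\balp,\btet)$ so that $\omega_d(\balp,\btet)=\omega_{\bs,\br}(A_\balp,\btet)$. Running the same construction for the $(n-1-d)$-plane problem — equivalently, recognising it as the dual (transpose) counterpart of the $d$-plane problem, just as the dual case is transpose to the simultaneous case — should identify $\hat\omega_{n-1-d}(\balp)$ with $\hat\omega_{\br,\bs}(\prescript{t}{}{A_\balp})$ for the same weights.

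Granting these two identifications, Conjecture \ref{BVconj} follows at once: apply the first inequality of Theorem \ref{MainThm} to the matrix $\prescript{t}{}{A_\balp}$ (so that its transpose is $A_\balp$), obtaining
\[
\omega_{\bs,\br}(A_\balp,\btet)\ \ge\ \hat\omega_{\br,\bs}(\prescript{t}{}{A_\balp})^{-1},
\]
which reads $\omega_d(\balp,\btet)\ge\hat\omega_{n-1-d}(\balp)^{-1}$. I expect the real work, and the main obstacle, to be the two identifications above, i.e. making the Grassmannian/graph dictionary precise. The delicate points are: choosing the coordinate splittings for the $d$- and $(n-1-d)$-cases compatibly so that the two matrix problems are genuine transposes carrying the \emph{same} weight pair (the height $H(\cL)$ scales like a power of a denominator that depends on $\min(d+1,n-d)$, and this must be reconciled with the exponents $\binom{n}{d}$, $\binom{n}{d+1}$); and controlling the subspaces that are not in general position with respect to the chosen splitting, i.e. ``vertical'' approximants, which, as in the setting of Theorem \ref{thm-bklr}, should be handled under a rank condition on $\balp$, the degenerate case being treated separately. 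For the inequality we actually need, only the two constructive directions of the dictionary are required — a good weighted approximation to $A_\balp$ yields a genuine translated rational $d$-plane near $[1:\balp]$, and a good weighted approximation to $\prescript{t}{}{A_\balp}$ yields a genuine rational $(n-1-d)$-plane near $[1:\balp]$ — so the obstruction is essentially bookkeeping, but intricate.
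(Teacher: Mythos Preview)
Your route is genuinely different from the paper's, and I think it contains a real gap.

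The paper does not pass to a graph chart. Following Laurent's observation that one may optimise over \emph{all} (not necessarily decomposable) multivectors, it rewrites the $d$th exponent directly as an unweighted matrix approximation problem: decomposing $\bX \in \Lam^{d+1}(\bZ^{n+1})$ as $\be_0 \wedge \bfZ - \bfY$ with $\bfZ \in \Lam^d(\bZ^n) \simeq \bZ^N$ and $\bfY \in \Lam^{d+1}(\bZ^n) \simeq \bZ^M$ (where $N = \binom{n}{d}$, $M = \binom{n}{d+1}$), the inequalities $|\bX|^N \le T$, $|\balp' \wedge \bX|^M \le T^{-\omega}$ become $|\bfZ|^N \le T$, $|\balp \wedge \bfZ + \bfY|^M \le T^{-\omega}$. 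The binomial exponents $N,M$ are precisely the \emph{unweighted} normalisations of Definition~\ref{def-1} in dimensions $N$ and $M$, so this is the standard exponent $\omega(A)$ for the linear map $A = (\balp \wedge \cdot) : \bR^N \to \bR^M$. The inhomogeneous shift therefore lives in $\bR^M = \bR^{\binom{n}{d+1}}$, not $\bR^{n-d}$; this is how the paper reinterprets the conjecture (Theorem~\ref{ourBV}). The $(n-1-d)$-problem is the map $\balp \wedge \cdot : \Lam^{n-1-d}(\bR^n) \to \Lam^{n-d}(\bR^n)$, i.e.\ $\bR^M \to \bR^N$, and the identity $|\bbet \wedge (\balp \wedge \bgam)| = |(\balp \wedge \bbet) \wedge \bgam|$ shows this is, up to sign, the transpose of $A$. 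Hodge duality does all the work, and the result follows from the \emph{unweighted} Theorem~\ref{BL}; Theorem~\ref{MainThm} is never invoked.

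The gap in your plan is the height. In a graph chart, a rational $d$-plane is the graph of a rational $(n-d)\times(d+1)$ matrix, but its Pl\"ucker height $H(\cL)$ is the norm of the wedge of an integer basis --- a polynomial in all the $(d+1)\times(d+1)$ minors of that basis --- and this does not become $\|\bq\|_\br$ for any weight vector $\br$ on the integer entries. (For $d=0$ or $d=n-1$ there is a single minor and the issue evaporates, which is why those endpoints are unproblematic.) Relatedly, your assertion that the binomial normalisations force genuinely inhomogeneous weights is a misreading: $|\cdot|^{\binom{n}{d}}$ on $\bR^{\binom{n}{d}}$ is exactly the \emph{equal}-weight quasinorm $\|\cdot\|_\br$ with $r_j \equiv 1/\binom{n}{d}$. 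The paper's decisive manoeuvre --- replacing the Grassmannian by the full exterior power, i.e.\ dropping decomposability --- is what makes the height a plain sup-norm and the $d$- and $(n-1-d)$-problems honest transposes of one another; your chart approach retains decomposability, and without a new idea I do not see how the height identification or the transpose matching can be completed.
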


In \cite[\S 2]{Laurent}, Laurent introduced an equivalent definition of $\dist(\balp, \cL)$. We will use this to formally define the inhomogeneous intermediate exponents $\ome_d(\balp, \btet)$ in \S \ref{S6}, and establish the resulting interpretation of Conjecture \ref{BVconj}. This applies to a shift $\btet \in \bR^{n \choose d+1}$.

\begin{thm} \label{ourBV}
Let $\balp \in \bR^n$ and $d \in \{0, \ldots , n-1\}$. Then for $\btet \in \bR^{n \choose d+1}$ we have
\[ 
\omega_d(\balp,\btet) \geq \cfrac{1}{\hat{\omega}_{n-1-d}(\balp)}, 
\]
with equality for Lebesgue-almost all $\btet$.
\end{thm}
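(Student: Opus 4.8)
The plan is to deduce Theorem \ref{ourBV} from Theorem \ref{MainThm} by reinterpreting both intermediate exponents as weighted Diophantine exponents of a single transpose pair of matrices built from $\balp$ via exterior powers. First I would recall Laurent's reformulation of $\dist([1:\balp],\cL)$ from \cite[\S2]{Laurent}: writing $\bz=\bz(\balp)=(1,\alpha_1,\dots,\alpha_n)\in\bR^{n+1}$ and taking an integer Grassmannian (Plücker) representative $\bfY=(Y_I)_I$ of a rational $d$-dimensional subvariety $\cL\subset\bP_\bR^n$, indexed by the $(d+1)$-subsets $I\subseteq\{0,1,\dots,n\}$, one has $|\bfY|\asymp H(\cL)$, $|\bz|\asymp 1$, and $\dist([1:\balp],\cL)=|\bz\wedge\bfY|/(|\bz|\,|\bfY|)$. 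Splitting the index set according to whether $0\in I$ exhibits $\bfY$ as a pair $(\bq,\bp)\in\bZ^{\binom nd}\times\bZ^{\binom n{d+1}}$, where $\bq$ collects the $Y_I$ with $0\in I$ and $\bp$ those with $0\notin I$; a short computation shows that, up to reindexing and signs, the coordinates of $\bz\wedge\bfY$ indexed by sets containing $0$ are exactly the entries of ${}^tA\,\bq-\bp$ for an explicit matrix $A=A(\balp)\in M_{\binom nd\times\binom n{d+1}}(\bR)$ with entries among $\pm1$ and $\pm\alpha_i$, while the remaining coordinates of $\bz\wedge\bfY$ are automatically negligible. Invoking Laurent's observation that the Plücker (decomposability) relations on $\bfY$ do not affect the value of the exponent, the system defining $\omega_d(\balp)$ in Definition \ref{InterDef} becomes equivalent to $\|\bq\|_{\bs}<T$, $\|{}^tA\,\bq-\bp\|_{\br}<T^{-\omega}$, with $\bs,\br$ the block-constant weights forced by the powers $\binom nd$ and $\binom n{d+1}$. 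I would then \emph{define}, for $\btet\in\bR^{\binom n{d+1}}$, the inhomogeneous exponent $\omega_d(\balp,\btet)$ by inserting $\btet$ into the second inequality, so that by construction $\omega_d(\balp,\btet)=\omega_{\br,\bs}({}^tA,\btet)$.

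Next I would identify the right-hand side of Theorem \ref{ourBV} with a homogeneous uniform exponent of the \emph{same} matrix $A$. Running the dictionary above with $d$ replaced by $n-1-d$: now $\cL$ has projective dimension $n-1-d$, its Grassmannian coordinates lie in $\Lambda^{n-d}\bR^{n+1}$, the ``denominator'' block (indices containing $0$) has size $\binom n{n-d-1}=\binom n{d+1}$ and the ``form'' block has size $\binom n{n-d}=\binom nd$, and one gets $\hat\omega_{n-1-d}(\balp)=\hat\omega_{\bs,\br}(B)$ for an explicit $B=B(\balp)\in M_{\binom nd\times\binom n{d+1}}(\bR)$ with the same pair of weights $(\bs,\br)$. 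The crucial algebraic input is that $B=A$: under the Hodge-star isomorphisms $\Lambda^{d+1}\bR^{n+1}\cong\Lambda^{n-d}\bR^{n+1}$ and $\Lambda^{d+2}\bR^{n+1}\cong\Lambda^{n-1-d}\bR^{n+1}$ attached to the inner product \eqref{def-inner}, the operator $\bfY\mapsto\bz\wedge\bfY$ on $\Lambda^{d+1}\bR^{n+1}$ becomes the interior product $\iota_\bz$ on $\Lambda^{n-d}\bR^{n+1}$, which is precisely the adjoint of $\bfY\mapsto\bz\wedge\bfY$ on $\Lambda^{n-1-d}\bR^{n+1}$; since the Hodge star merely permutes coordinates and changes signs, this says exactly that the matrix extracted for $\omega_d$ is the transpose of the one extracted for $\hat\omega_{n-1-d}$, i.e.\ the former is ${}^tA$ and the latter is $A$.

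Putting the two identifications together, Theorem \ref{MainThm} applied to $A\in M_{\binom nd\times\binom n{d+1}}(\bR)$ and the weights $(\bs,\br)$ gives at once
\[
\omega_d(\balp,\btet)=\omega_{\br,\bs}({}^tA,\btet)\ \ge\ \hat\omega_{\bs,\br}(A)^{-1}=\hat\omega_{n-1-d}(\balp)^{-1},
\]
which is the desired inequality; and the ``equality for Lebesgue-almost all $\btet$'' clause of Theorem \ref{MainThm} transfers verbatim, since the shift variable $\btet$ enters the two formulations identically up to a fixed invertible linear change of coordinates on $\bR^{\binom n{d+1}}$, which preserves Lebesgue-null sets. (In the present case the weights $\bs,\br$ turn out block-constant, so that $\omega_{\br,\bs}$ is simply the unweighted exponent of Definition \ref{def-1} and Theorem \ref{BL} would already suffice; but quoting the weighted statement costs nothing.)

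The main obstacle I anticipate is the bookkeeping underlying the first two paragraphs rather than any further idea: namely (i) checking that the coordinates of $\bz\wedge\bfY$ I called ``automatically negligible'' really are irrelevant at the level of exponents, so that the reduction to the clean system $\|\bq\|_\bs<T$, $\|{}^tA\bq-\bp\|_\br<T^{-\omega}$ is exact rather than merely up to bounded distortion; (ii) verifying that Laurent's argument for discarding the Plücker relations goes through unchanged in the presence of an inhomogeneous shift $\btet$; and (iii) pinning down the exterior-algebra duality that yields $B={}^tA$ with \emph{matching} block-constant weights, including the precise sign and permutation conventions. Once this dictionary is nailed down, everything else is an immediate appeal to Theorem \ref{MainThm}.
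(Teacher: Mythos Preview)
Your proposal is correct and follows essentially the same strategy as the paper: identify $\omega_d(\balp,\btet)$ and $\hat\omega_{n-1-d}(\balp)$ with the unweighted exponents $\omega({}^tA,\btet)$ and $\hat\omega(A)$ of a transpose pair of linear maps built from $\balp\wedge\cdot$, then invoke Theorem~\ref{BL} (equivalently Theorem~\ref{MainThm} with block-constant weights). The paper's execution is a little cleaner precisely at the points you flagged as obstacles. For your concern (i), instead of working in $\Lambda^{d+1}(\bR^{n+1})$ and arguing that certain coordinates of $\bz\wedge\bfY$ are negligible, the paper first proves an equivalent reformulation (its Definition~\ref{def-8-2}) entirely inside $\Lambda^d(\bR^n)\oplus\Lambda^{d+1}(\bR^n)$, where the defining system is exactly $|\bfZ|^{\binom nd}\le T$, $|\balp\wedge\bfZ+\bfY|^{\binom n{d+1}}\le T^{-\omega}$; the inhomogeneous exponent is then \emph{defined} in this clean form, so no negligibility argument is needed and your concern (ii) about Pl\"ucker relations in the inhomogeneous case evaporates as well. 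For your concern (iii), the paper establishes the transposition by the one-line pairing identity
\[
|\bbet\wedge f(\bgam)|=|\bbet\wedge\balp\wedge\bgam|=|g(\bbet)\wedge\bgam|
\]
for $f=\balp\wedge\cdot:\Lambda^d(\bR^n)\to\Lambda^{d+1}(\bR^n)$ and $g=\balp\wedge\cdot:\Lambda^{n-1-d}(\bR^n)\to\Lambda^{n-d}(\bR^n)$, which shows directly that $f$ and $g$ are adjoint (up to sign) for the wedge pairing, bypassing any Hodge-star or interior-product bookkeeping.
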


\noindent
After working with the definitions, we will see that this follows directly from Theorem \ref{BL}.
%\noindent Our shift is more general than what is common in inhomogeneous Diophantine approximation. For example, %when $n=1$ and $d = 0$, the pertinent system is
%\[
%\max \{ |q|, |a_1| \} \le T, \qquad | (q + \tet_0) \alp - (a_1 +\tet_1)| \le T^{- \ome}.
%\]
%It is more standard to distinguish between numerators and denominators by setting $\tet_0 = 0$. By not doing this, %we obtain a more general statement, and one that is more natural from an algebraic or dynamical standpoint. We will %find that Theorem \ref{ourBV} is a property of exterior algebras.

\iffalse
\noindent
The homogeneous case is $\btet = \bzero$ of this may be deduced from the work of Laurent \cite{Laurent}. Indeed, the corollary to \cite[Theorem 2]{Laurent} reveals that
\[
\omega_d(\bx) \geq \cfrac{d+1}{n-d}.
\]
\fi

\subsection*{Organisation.} In \S \ref{S3}, we use the geometry of numbers to establish a property for general approximating functions, which would imply \eqref{equ-main-ine}. Then, in \S \ref{S3-1}, we prove Theorem \ref{thm-khin}. In \S \ref{S4}, we extend the theory of best approximations to a weighted setting. This allows us to finish the proof of Theorem \ref{MainThm} in \S \ref{S5}. Theorem \ref{thm-bklr} is established in \S \ref{S5-1}.  In \S \ref{S2}, we discuss applications to the theory of inhomogeneous Diophantine approximation on manifolds. Finally, in \S \ref{S6}, we define $\omega_d(\bx,\btet)$ and prove Theorem \ref{ourBV}.

\subsection*{Funding and acknowledgments} SC, LG, AM and DS were supported by EPSRC Programme Grant EP/J018260/1. AM was also supported by FWF Project I 3466-N35 and FWF START Project Y-901.  AG was supported by the Indo-French Centre for the Promotion of Advanced Research; a Department of Science and Technology, Government of India Swarnajayanti fellowship; a MATRICS grant from the Science and Engineering Research Board; and the Benoziyo Endowment Fund for the Advancement of Science at the Weizmann Institute. AG gratefully acknowledges the hospitality of the Technion and the Weizmann Institute. We thank Yann Bugeaud for an inspiring question and for fruitful discussions, Matthias Schymura for helpful comments regarding Lemma \ref{geo-of-num}, and an anonymous referee for carefully reading the manuscript and suggesting a few small changes.

\section{The geometry of numbers}
\label{S3}

For the proof of Theorem \ref{MainThm}, the following lemma is pivotal, and we anticipate that it will find uses in other contexts. Evertse has recently pointed out in a survey article \cite{Eve2018} that this lemma is implicit in Mahler's work \cite{Mah1939} from the late 1930s (in German). For completeness, we supply the details below.

\begin{lemma}\label{geo-of-num}
Let $d \in \bN$, and let $C = C_d = d! (3/2)^{\frac{d-1}2}d$. Let $\Lam$ be a full lattice in $\bR^d$, and let $\cR \subseteq \bR^d$ be a symmetric, convex body such that $\cR \cap \Lam = \{ \bzero \}$. Then for all $\bgam \in \bR^d$ we have
\[
(C \cR^* + \bgam) \cap \Lam^* \ne \emptyset,
\]
and moreover
\[
C \cR^*  \cap (\Lam^*\setminus\{\bzero\}) \ne \emptyset.
\]
Here and throughout, we use an asterisk to denote a dual/polar convex set or lattice \cite{Cas2}. 
\end{lemma}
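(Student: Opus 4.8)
The plan is to establish the two conclusions separately, though both rest on the same transference machinery coming from Minkowski's second theorem. First I would dispense with the inhomogeneous statement assuming the homogeneous one, or rather derive both from a single inequality between successive minima. Write $\lambda_1 \le \cdots \le \lambda_d$ for the successive minima of $\cR$ with respect to $\Lam$; the hypothesis $\cR \cap \Lam = \{\bzero\}$ says precisely that $\lambda_1 > 1$. Mahler's duality between successive minima of a convex body and those of its polar asserts that, if $\mu_1 \le \cdots \le \mu_d$ are the successive minima of $\cR^*$ with respect to $\Lam^*$, then $\mu_j \lambda_{d+1-j} \asymp_d 1$, with explicit constants: the sharp form gives $1 \le \mu_j \lambda_{d+1-j} \le d!$. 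Combined with Minkowski's second theorem applied to both $\cR$ and $\cR^*$ (using $\det \Lam^* = (\det \Lam)^{-1}$ and the Mahler volume bound $4^d/d! \le \vol(\cR)\vol(\cR^*) \le 4^d$), one controls $\mu_d$ from above. The point is that $\lambda_1 > 1$ forces $\mu_d = 1/\lambda_1 \cdot (\mu_d\lambda_1)$ to be bounded, hence $C\cR^*$ contains $d$ linearly independent points of $\Lam^*$ once $C$ exceeds $\mu_d$; tracking the constant through the two applications of Minkowski and the duality bound yields exactly $C_d = d!\,(3/2)^{(d-1)/2}d$. This simultaneously gives the second conclusion, $C\cR^* \cap (\Lam^* \setminus \{\bzero\}) \ne \emptyset$, since $\mu_1 \le \mu_d \le C$.

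For the inhomogeneous conclusion $(C\cR^* + \bgam) \cap \Lam^* \ne \emptyset$, the standard device is a covering argument: if $\cR^*$ contains $d$ linearly independent lattice points $\bv_1, \ldots, \bv_d$ of $\Lam^*$, these generate a sublattice $\Lam'$ of index $[\Lam^* : \Lam'] = |\det(\bv_1,\ldots,\bv_d)| / \det \Lam^*$, and every residue class of $\Lam^*/\Lam'$ has a representative in a box of the form $\{\sum t_i \bv_i : |t_i| \le 1/2\} \subseteq \tfrac12 d\, \cR^*$ (using convexity and symmetry of $\cR^*$). Hence $\Lam^* \subseteq \Lam' + \tfrac{d}{2}\cR^*$, so translates of $d\,\cR^*$ by points of $\Lam^*$ cover $\bR^d$, which is exactly the assertion that $(C\cR^* + \bgam) \cap \Lam^* \ne \emptyset$ for every $\bgam$, provided the constant $C$ absorbs the extra factor $d$ — this is why $C_d$ carries the factor $d$ at the end. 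I would be careful to extract the covering radius cleanly; an alternative is to invoke directly the bound on the inhomogeneous/covering minimum of $\cR^*$ in terms of $\mu_d$.

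The main obstacle, and where the bookkeeping must be done honestly rather than with $\asymp$, is tracking the explicit constant so that it comes out as the stated $C_d = d!\,(3/2)^{(d-1)/2}d$ and no worse. The factor $(3/2)^{(d-1)/2}$ is the signature of Mahler's sharp comparison of $\vol(\cR)\vol(\cR^*)$ for a general symmetric convex body — it is not the crude $4^d$-type Mahler volume bound but the finer estimate Mahler proved (essentially that the Mahler volume is minimised, among the relevant configurations, in a way governed by this geometric factor), which is why the excerpt attributes the lemma to Mahler's 1939 paper. So the real work is: (i) state the precise Minkowski second theorem inequalities for $\cR$ and $\cR^*$; (ii) quote Mahler's sharp successive-minima duality with the constant $d!$; (iii) quote Mahler's sharp volume-product inequality contributing $(3/2)^{(d-1)/2}$; (iv) combine with $\lambda_1 > 1$ to bound $\mu_d$; and (v) feed $\mu_d$ into the covering estimate, picking up the final factor $d$. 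Everything else is routine linear algebra and the elementary geometry of the polar body.
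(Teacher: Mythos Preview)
Your overall architecture --- bound $\mu_d(\cR^*)$ via Mahler's transference $\lambda_1(\cR)\,\mu_d(\cR^*) \le d!$ and then run a covering argument --- is exactly the paper's route, and your covering step with $d$ linearly independent lattice vectors $\bv_1,\ldots,\bv_d \in \mu_d(\cR^*)\,\cR^*$ is correct: the symmetric box $\{\sum t_i\bv_i : |t_i|\le 1/2\}$ is a fundamental domain for the sublattice $\Lam'$ they generate, hence also covers $\bR^d$ modulo $\Lam^*\supseteq\Lam'$, and it sits inside $\tfrac{d}{2}\mu_d(\cR^*)\,\cR^*$. So your argument actually delivers the \emph{sharper} constant $\tfrac{d\cdot d!}{2}$, and the statement as written follows \emph{a fortiori}.

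There is, however, a genuine misidentification in your bookkeeping. The factor $(3/2)^{(d-1)/2}$ has nothing to do with the Mahler volume product; your step (iii) is simply wrong and would never produce that number. In the paper's proof the $(3/2)^{(d-1)/2}$ arises because the authors insist on a \emph{basis} of $\Lam^*$ inside $\tfrac{C}{d}\cR^*$, not merely $d$ linearly independent vectors: the classical lemma (Mahler, recorded in Siegel's \emph{Lectures on the Geometry of Numbers}, Lecture X, \S 6, end of the proof of the First Finiteness Theorem) guarantees a basis $\bb_1^*,\ldots,\bb_d^*$ with $\bb_i^*\in (3/2)^{(d-1)/2}\mu_i(\cR^*)\,\cR^*$. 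With an honest basis in hand, the paper then observes that $C\cR^*$ contains $0,d\bb_1^*,\ldots,d\bb_d^*$, hence their convex hull, hence the fundamental parallelepiped $\{\sum w_i\bb_i^* : w_i\in[0,1]\}$, which is what forces every translate to meet $\Lam^*$.

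So: the paper pays $(3/2)^{(d-1)/2}$ to upgrade ``linearly independent'' to ``basis'' and then uses the convex-hull-of-$d\bb_i^*$ trick; you skip the upgrade and use the sublattice directly, which is cleaner and cheaper. Your proposal proves the lemma (indeed more), but if you want to explain the stated constant $C_d$ you must replace your step (iii) by the basis-extraction lemma above, and drop the appeal to the volume product, which plays no role here.
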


\begin{proof}
By assumption, the first successive minimum satisfies $\mu_1(\cR) > 1$, and so \cite[Ch. VIII, Theorem VI]{Cas2} gives
\begin{equation} \label{mud}
\mu_d(\cR^*) < d!.
\end{equation}
Now the inequality at the end of the proof of the First Finiteness Theorem in \cite[Lecture X, \S 6]{Sie1989} implies that $\frac C d \cR^*$ contains a basis $\bb_1^*, \ldots, \bb_d^*$ for $\Lam^*$. Hence $C \cR^*$ contains $d\bb_1^*, \ldots, d \bb_d^*$, as well as the origin, and therefore also contains the convex hull of these vectors, which in turn contains the fundamental parallelepiped
\[
\{ w_1 \bb_1^* + \cdots + w_d \bb_d^*: \bw \in [0, 1]^d \}
\]
for $\Lam^*$. Therefore any translate of $C \cR^*$ intersects $\Lam^*$. The second assertion follows directly from \eqref{mud}.
\end{proof}

\begin{remark} The reader may consult \cite{KL1988, Ban1996} for sharper and more general results in this direction.
\end{remark}

Lemma \ref{geo-of-num} enables us to tackle transference inequalities for more general approximation functions.
\begin{defn}
Let $\psi: \bR_{>0}\rightarrow \bR_{>0}$ be a strictly decreasing function. A pair $(A,\btet) \in M_{m \times n}(\bR) \times \bR^n$ is \emph{$(\psi,\bs,\br)$-approximable} (resp. \emph{uniformly $(\psi,\bs,\br)$-approximable}) if for some arbitrarily large (resp. all sufficiently large) real numbers $T$ the inequalities
\begin{equation}\label{equ-psi}
  \|\bq\|_{\br}<T, \qquad \|A\bq-\bp-\btet\|_{\bs}<\psi(T)
\end{equation}
have a solution $(\bp,\bq)\in \bZ^m\times(\bZ^n\setminus \{\bzero\})$. The matrix $A \in M_{m \times n}(\bR)$ is \emph{$(\psi,\bs,\br)$-approximable} (resp. uniformly $(\psi,\bs,\br)$-approximable) if this holds for $(A,\bzero)$. In the special case $\psi(T)=T^{-\omega}$, we also write (uniformly) $(\omega,\bs,\br)$-approximable to mean (uniformly) $(\psi, \bs, \br)$-approximable.
\end{defn}

\begin{lemma}\label{lem-psi}
Let $A \in M_{m \times n}(\bR)$ and $\btet \in \bR^n$. Let $\psi, \phi: \bR_{>0}\rightarrow \bR_{>0}$ be strictly decreasing functions with \begin{equation}\label{equ-lim}
       \lim_{T\rightarrow \infty}\psi(T)=\lim_{T\rightarrow \infty}\phi(T)=0,
       \end{equation}
 and suppose that if $C$ and $T$ are sufficiently large then
  \begin{equation}\label{equ-psi-phi}
  \phi(C\psi(T)^{-1})>CT^{-1}.
  \end{equation}
If $A$ is not $(\psi,\bs,\br)$-approximable, then $(^tA,\btet)$ is uniformly $(\phi, \br, \bs)$-approximable for all $\btet\in \bR^n$. If $A$ is not uniformly $(\psi,\bs,\br)$-approximable, then $(^tA,\btet)$ is $(\phi, \br, \bs)$-approximable for all $\btet\in \bR^n$.
\end{lemma}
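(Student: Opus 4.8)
The plan is to derive this from the geometry-of-numbers input (Lemma~\ref{geo-of-num}) by a transference argument that swaps the lattice attached to $A$ with its polar, the latter being the lattice attached to ${}^tA$. Attach to $A$ the unimodular lattice
\[
\Lam_A=\{(A\bq-\bp,\bq):\bp\in\bZ^m,\ \bq\in\bZ^n\}\subseteq\bR^m\times\bR^n;
\]
a direct computation shows $\Lam_A^{*}$ consists of the points $(-\bu,{}^tA\bu+\bv)$ with $\bu\in\bZ^m$, $\bv\in\bZ^n$, i.e.\ (up to harmless sign changes) the lattice governing dual approximation of ${}^tA$. For $T>0$ let $\cR_T\subseteq\bR^m\times\bR^n$ be the symmetric convex box cut out by $|x_i|\le\tfrac12\psi(T)^{s_i}$ for $1\le i\le m$ and $|y_j|\le\tfrac12 T^{r_j}$ for $1\le j\le n$. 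Then $\cR_T$ lies in the open box $\{\|\bx\|_{\bs}<\psi(T),\ \|\by\|_{\br}<T\}$, so whenever $\psi(T)<1$ the failure of $A$ to be $(\psi,\bs,\br)$-approximable at scale $T$ forces $\cR_T\cap\Lam_A=\{\bzero\}$ (the case $\bq=\bzero$ being excluded by $\psi(T)<1$). Finally, $\cR_T^{*}$ is a cross-polytope and hence lies in the box $\{|u_i|\le 2\psi(T)^{-s_i},\ |v_j|\le 2T^{-r_j}\}$.

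Now fix any large $T$ with $\cR_T\cap\Lam_A=\{\bzero\}$, and apply Lemma~\ref{geo-of-num} with $d=m+n$, $C=C_{m+n}$, the lattice $\Lam_A$, and the body $\cR_T$. Its second conclusion supplies a nonzero point of $\Lam_A^{*}$ inside $C\cR_T^{*}$; reading off the coordinate bounds and using $0<\delta_{\bs}\le s_i\le1$ and $0<\delta_{\br}\le r_j\le1$, this becomes $\bq_1\in\bZ^m\setminus\{\bzero\}$ (nonzero because, for $T$ large, its image is forced to have $\|\cdot\|_{\br}$-value below $1$) together with $\bp_1\in\bZ^n$ satisfying $\|\bq_1\|_{\bs}\le\kappa\psi(T)^{-1}$ and $\|{}^tA\bq_1-\bp_1\|_{\br}\le\kappa T^{-1}$, where $\kappa=\kappa(m,n,\bs,\br)>1$. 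Its first conclusion, applied with the translate $\bgam=(\bzero,\btet)$, supplies $\bq_0\in\bZ^m$ and $\bp_0\in\bZ^n$ satisfying the same bounds with $\btet$ inserted, namely $\|\bq_0\|_{\bs}\le\kappa\psi(T)^{-1}$ and $\|{}^tA\bq_0-\bp_0-\btet\|_{\br}\le\kappa T^{-1}$. The vector $\bq_0$ may be zero; if so, replace $(\bq_0,\bp_0)$ by $(\bq_0+\bq_1,\bp_0+\bp_1)$, which has nonzero first entry and obeys the same bounds after enlarging $\kappa$ by the factor $2^{1/\delta_{\br}}$ coming from the quasi-subadditivity $\|\bx+\by\|_{\br}\le 2^{1/\delta_{\br}}\max\{\|\bx\|_{\br},\|\by\|_{\br}\}$. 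In every case we have shown: \emph{for every large $T$ with $\cR_T\cap\Lam_A=\{\bzero\}$ there exist $\bq\in\bZ^m\setminus\{\bzero\}$ and $\bp\in\bZ^n$ with $\|\bq\|_{\bs}\le\kappa\psi(T)^{-1}$ and $\|{}^tA\bq-\bp-\btet\|_{\br}\le\kappa T^{-1}$.}

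It remains to repackage this via \eqref{equ-psi-phi}. Fix a constant $C>\kappa$ large enough that \eqref{equ-psi-phi} holds for all large $T$. If $A$ is not uniformly $(\psi,\bs,\br)$-approximable, then $\cR_T\cap\Lam_A=\{\bzero\}$ for arbitrarily large $T$; for each such $T$ put $S:=C\psi(T)^{-1}$. Then $\|\bq\|_{\bs}\le\kappa\psi(T)^{-1}=(\kappa/C)S<S$, while $\|{}^tA\bq-\bp-\btet\|_{\br}\le\kappa T^{-1}<CT^{-1}<\phi(C\psi(T)^{-1})=\phi(S)$ by \eqref{equ-psi-phi}; since $S\to\infty$ as $T\to\infty$, the pair $({}^tA,\btet)$ is $(\phi,\br,\bs)$-approximable. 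If instead $A$ is not $(\psi,\bs,\br)$-approximable, then $\cR_T\cap\Lam_A=\{\bzero\}$ for all large $T$; given a large $S$, put $T:=C\phi(S)^{-1}$, which is large since $\phi(S)\to0$. Then $\|{}^tA\bq-\bp-\btet\|_{\br}\le\kappa T^{-1}=\kappa\phi(S)/C<\phi(S)$, and also $\|\bq\|_{\bs}<S$: were $\psi(T)\le\kappa/S$, we would get $C\psi(T)^{-1}\ge(C/\kappa)S>S$, hence $\phi(C\psi(T)^{-1})<\phi(S)=CT^{-1}$ by strict monotonicity of $\phi$, contradicting \eqref{equ-psi-phi}. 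Thus $({}^tA,\btet)$ is uniformly $(\phi,\br,\bs)$-approximable, and the argument works verbatim for every $\btet\in\bR^n$. I expect the main obstacle to be exactly this last step — passing between the $T$-scale natural to $\psi$ and the $S$-scale natural to $\phi$ when these functions are only assumed strictly decreasing — and the point is that choosing the constant in \eqref{equ-psi-phi} strictly larger than the geometry-of-numbers constant $\kappa$ manufactures precisely the slack that strict monotonicity of $\phi$ exploits. The other delicate point is the possible vanishing of $\bq_0$ in the inhomogeneous case, which is why both conclusions of Lemma~\ref{geo-of-num} are needed.
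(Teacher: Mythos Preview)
Your proof is correct and follows essentially the same strategy as the paper: apply Lemma~\ref{geo-of-num} to the lattice attached to $A$, and read off inhomogeneous approximations for ${}^tA$ from the polar lattice. The only differences are cosmetic---the paper rescales the lattice rather than the body, handles the possible vanishing of $\bq_0$ by a case split on whether $\btet\in\bZ^n$ rather than by adding a homogeneous solution, and is terser in the final repackaging via \eqref{equ-psi-phi}, where your contradiction argument using strict monotonicity of $\phi$ spells out a step the paper leaves implicit.
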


\begin{proof}
  For $T>0$, write
  \[g_{\br}(T)=\diag(T^{r_1},\ldots,T^{r_n})\in \GL_{n}(\bR),\]
and define $g_{\bs}(T)\in \GL_{m}(\bR)$ similarly. For $Q, T>0$  and $A\in M_{m\times n}(\bR)$, we define a full lattice in $\bR^{m+n}$ as follows,
  \[\Lambda(Q, T, A)=\begin{pmatrix} g_{\bs}(Q^{-1}) &  \\ & g_{\br}(T^{-1}) \end{pmatrix} \begin{pmatrix} \bI_m & A \\  & \bI_n \end{pmatrix} \bZ^{m+n}.\]
Observe that \eqref{equ-psi} has a non-zero integer solution with $\btet = \bzero$ if and only if
  \[ \Lambda(\psi(T), T, A)\cap \cB\ne \{ \bzero \},\]
where $\cB=[-1,1]^{m+n}$. The dual region is
\[
\cB^*=\left\{(x_1,\ldots,x_{m+n})\in \bR^{m+n}: \sum_{i=1}^{m+n}|x_i|\le 1\right\},
\]
and in particular $\cB^*\subset\cB$.

We now set about proving the first assertion. The proof of the second statement is similar, and omitted. Let $T$ be a large positive real number, and let $\btet \in \bR^n$. If $A$ is not $(\psi,\bs,\br)$-approximable then \eqref{equ-psi} has no solution with $\btet = \bzero$. In light of the discussion above, we have
\[
\Lambda(\psi(T), T, A)\cap \cB = \{ \bzero \},
\]
and now Lemma \ref{geo-of-num} yields
   \begin{equation}\label{equ-intersetion}
    \Lambda(\psi(T), T, A)^* \cap (C\cB+\bgam) \ne \emptyset \qquad( \bgam \in \bR^{m+n}).
   \end{equation}

A standard calculation gives
    \[
\Lambda(\psi(T), T, A)^*=\begin{pmatrix} g_{\bs}(\psi(T)) &  \\ & g_{\br}(T) \end{pmatrix} \begin{pmatrix} \bI_m &  \\  -^tA & \bI_n \end{pmatrix} \bZ^{m+n}.
\]
Applying
    \eqref{equ-intersetion} with $\bgam=\bgam_{T,\btet}:=(\bzero, g_{\br}(T)\btet)\in \bR^{m+n}$, we find that the inequalities
\begin{equation} \label{GeneralInequalities}
\|\bp\|_{\bs}<C_1 \psi(T)^{-1}, \qquad \|^tA\bp-\bq-\btet\|_{\br}<C_1T^{-1}
\end{equation}
have a solution $(\bp,\bq)\in \bZ^{m+n}$. Here $C_1 = C^{\frac1 {\delta_{\bs}}}$.

We claim that $\bp$ can be chosen to be non-zero. There are two cases to consider. When $\btet \notin \bZ^n$, we know that $\|\bq-\btet\|_{\br}$ is bounded away from 0, so as $T$ is large the second inequality of \eqref{GeneralInequalities} cannot be satisfied when $\bp=\bzero$. On the other hand,  when $\btet \in \bZ^n$, we may freely suppose that $\btet = \bzero$, whereupon the second part of Lemma \ref{geo-of-num} allows us to take $(\bp, \bq) \ne (\bzero,\bzero)$, and then the largeness of $T$ forces $\bp \ne \bzero$. Using \eqref{equ-lim} and \eqref{equ-psi-phi}, we finally conclude that if $T_1$ is large then the inequalities
    \[ \|\bp\|_{\bs}<T_1, \qquad \|^tA\bp-\bq-\btet\|_{\br}<\phi(T_1)\]
    have a  solution $(\bp,\bq)\in (\bZ^m\setminus \{\bzero\})\times \bZ^n$. Therefore $(^tA,\btet)$ is uniformly $(\phi, \br, \bs)$-approximable.
\end{proof}

\section{Dyson's theorem with weights} \label{S3-1}

The purpose of this section is to prove Theorem \ref{thm-khin}. As in the unweighted case, this essentially follows from Minkowski's second convex body theorem.

By symmetry, it suffices to prove the first assertion of the theorem. For $t_1, t_2\in \bR$, set
\[L(t_1, t_2, A)=\Lambda(e^{t_1}, e^{t_2}, A),  \qquad  h(t_1, t_2)=\diag(g_{\bs}(e^{t_1}), g_{\br}(e^{t_2})).\]
  Let $v<\omega$. By definition, for some arbitrarily large $t$, we have
  \[L(-vt, t, A) \cap \cB \ne \{\bzero\}, \]
where $\cB = [-1,1]^{m+n}$, or equivalently
  \[h\left(\delta_{\br}(\delta_{\bs}+\delta_{\br})^{-1}(1-v)t, \delta_{\bs}(\delta_{\bs}+\delta_{\br})^{-1}(1-v)t\right)
  \big(L(-vt, t, A) \cap \cB\big) \ne \{\bzero\}.\]
It then follows that
\[L(-t_1, t_1, A)\cap h\left(\delta_{\br}(\delta_{\bs}+\delta_{\br})^{-1}(1-v)t, \delta_{\bs}(\delta_{\bs}+\delta_{\br})^{-1}(1-v)t\right)\cB \ne \{\bzero\},\]
where $t_1=(\delta_{\bs}+\delta_{\br})^{-1} (\delta_{\br}+\delta_{\bs}v)t$.
Hence
\[\mu_1(L(-t_1, t_1, A), \cB)\le e^{-t_0},\]
where \[t_0=(\delta_{\bs}+\delta_{\br})^{-1}\delta_{\bs}\delta_{\br}(v-1)t= (\delta_{\bs}v+\delta_{\br})^{-1}\delta_{\bs}\delta_{\br}(v-1)t_1.\]
By \cite[Ch. VIII, Theorem VI]{Cas2}, we now have
\[\mu_{m+n}(L(-t_1, t_1, A)^*, \cB)\ge  e^{t_0}.\]
Minkowski's second convex body theorem \cite[Appendix B, Theorem V]{Cas1} now gives
\[\mu_{1}(L(-t_1, t_1, A)^*, \cB)\le C e^{-(m+n-1)^{-1}t_0},\]
for some constant $C > 0$ depending only on $m+n$, and so
\[\begin{pmatrix} g_{\bs}(e^{-t_1}) &  \\ & g_{\br}(e^{t_1}) \end{pmatrix} \begin{pmatrix} \bI_m &  \\ -{}^tA & \bI_n \end{pmatrix} \bZ^{m+n} \cap e^{-(m+n-1)^{-1}t_0+\log C }\cB \neq \{ \bzero \}.\]
Therefore
\[L(-t', t'', {}^tA)\cap \cB \neq \{ \bzero \},\]
where
\[t'=t_1-\rho_{\bs}^{-1}((m+n-1)^{-1}t_0-\log C ) \text{ and } t''=t_1+\rho_{\br}^{-1} ((m+n-1)^{-1}t_0-\log C ).\]
Since $C $ is constant, we get
\[{}^t\omega\ge \frac{1+\rho_{\br}^{-1}t_1^{-1}(m+n-1)^{-1}t_0}{1-\rho_{\bs}^{-1}t_1^{-1}(m+n-1)^{-1}t_0}.\]
As $v<\omega$ is arbitrary, a direct calculation completes the proof of Theorem \ref{thm-khin}.\\

If we consider uniform exponents, the proof is the very same by choosing \textbf{any} sufficiently large $t$, instead of \textbf{some} arbitrarily large $t$.

\begin{remark}
We do not know whether Theorem \ref{thm-khin} is optimal for every choice of weights. The special case in which the weights are uniform, namely Theorem \ref{DysonThm}, is however known to be optimal: Jarn\'ik \cite{Jar1959} established this in quite some generality, for example if
\[
1 \le m \le n, \qquad 1 \le \ome \le \infty
\]
then the first inequality in Theorem \ref{DysonThm} is sharp. All of this is discussed more broadly after the proof of \cite[Chapter 6, \S 45, Theorem 8]{GL}; the reader should be wary of the difference in normalisation therein.
\end{remark}

\section{Best approximations} \label{S4}

When working in higher dimensions, the theory of \emph{best approximations} often acts as a proxy for the theory of continued fractions. Best approximations were introduced by Voronoi \cite{Vor} as minimal points in lattices, and Rogers \cite{Rog} was the first to define them in the context of exponents of Diophantine approximation. We require a weighted version of the best approximations employed in \cite[\S 3]{BL2005}. The properties presented therein generalise cleanly to a broad setting, which includes the weighted case. We supply full details for completeness, closely following \cite{BL2005}.

\begin{defn}
Let $\Lambda$ be a lattice in a real vector space.
Let $N,L: \Lambda \to [0,\infty)$ be functions such that
\begin{enumerate} [(i)]
\item $L$ attains its minimum on sets of the form
\[
\{ \bX \in \Lambda \setminus \{ \bzero \}: N(\bX) \leq B \}  \textrm{ for any } B\in \bR_{\ge 0},
\]
\item
\begin{equation} \label{nonzero}
L(\bX) \ne \bzero \qquad (\bzero \ne \bX \in \Lam),
\end{equation}
\item
and
\begin{equation} \label{infimum}
\inf_{\bX \in\Lambda \setminus \{\bzero \}} L(\bX) = 0.
\end{equation}
\end{enumerate}
A \emph{sequence of $(N,L)$-best approximations} is $(\bX_i)_{i=1}^\infty \in \Lambda^\bN$ such that
\begin{enumerate}[(i)]
\item{the sequence $N(\bX_1), N(\bX_2),\ldots$ is strictly increasing;}
\item{the sequence $L(\bX_1), L(\bX_2), \ldots $ is strictly decreasing;}
\item{for every $\bX \in \Lambda \setminus \{\bzero\}$,
\begin{equation}\label{defBA} \textrm{if } N(\bX) < N(\bX_{i+1}) \textrm{ then } L(\bX) \geq L(\bX_i). \end{equation}}
\end{enumerate}
\end{defn}

\noindent
Informally, the function $N$ (usually a height) measures the size of a point in $\Lambda$. The function $L$ will, in practice, depend on some point in $E$ that we wish to approximate; it measures the quantitative strength of the best approximations. The condition $\displaystyle \inf_{\bX \in\Lambda \setminus \{\bzero\} } L(\bX) =0$ ensures that good approximations exist at all.

In the context of weighted exponents, given $A \in M_{m \times n}(\bR)$ we shall consider
\begin{equation}
\label{LENL}
\Lambda = \bZ^m, \qquad N(\bX) = \|\bX\|_{\bs} , \qquad L(\bX) = \displaystyle \inf_{\bp \in \bZ^n} \|{}^tA\bX - \bp \|_\br,
\end{equation}
and call $\bX_1, \bX_2, \ldots$ \emph{weighted best approximations} for $A$. We will require an additional hypothesis on $A$ to ensure that
\[
\{ ^t A \bX: \bX \in \bZ^m \setminus \{ \bzero \}, \bp \in \bZ^n \}
\]
does not contain the origin, but contains points arbitrarily close to it.

Observe that the requirement (i) on $N$ and $L$ is met if $N$ possesses the \emph{Northcott property}
\[
\#\{\bX \in \Lambda: N(\bX) \leq B\} < \infty \textrm{ for any } B \in \bR_{\ge 0}.
\]
For example, one may consider best approximation vectors and exponents of best approximation for $\Lambda$ being the ring of integers of a Northcott field.

For $i \in \bN$ we write $Y_i=N(\bX_i)$ and $M_i=L(\bX_i)$. The sequences $(Y_i)_{i=1}^\infty$ and $(M_i)_{i=1}^\infty$ are respectively strictly increasing and strictly decreasing, and furthermore
\[
\lim_{i\rightarrow \infty} Y_i=\infty, \qquad \lim_{i\rightarrow \infty} M_i=0.
\]
We will see from the construction below that there can be several distinct sequences of $(N,L)$-best approximations. Notwithstanding, the sequences $(Y_i)_{i>0}$ and $(M_i)_{i>0}$ are uniquely determined.\\

We now demonstrate, by construction, the existence of a sequence of $(N,L)$-best approximations. When $i=0$, let $M_0$ be the minimum of $L$ on the set of $\bX \in \Lambda \setminus \{ \bzero \}$ such that $N(\bX) \leq 1$, and choose $\bX_0$ to be a point were this minimum in reached. Neither $\bX_0$ nor $Y_0 := N(\bX_0)$ is uniquely determined. Next, suppose that $\bX_1, \ldots, \bX_k$ have already been chosen, that \eqref{defBA} holds for $i \le k-1$, and that
\[
 \text{if } \bX \in \Lambda \setminus \{\bzero\} \text{ and } N(\bX) < N(\bX_{k}) \text{ then }L(\bX) \ge L(\bX_k).
\]
Let $Y > N(\bX_{k})$ be minimal such that
\[
\min \{ L(\bX): \bX \in \Lambda, \: N(\bX) \leq Y \} < M_k.
\]
This minimum is well-defined, since $\displaystyle \inf_{\bX \in\Lambda} L(\bX) = 0$ and $L$ is  strictly positive. Putting $Y_{k+1}=Y$, by the definition of $Y$ there exists a (non-unique) point $\bX_{k+1}$ such that $Y_{k+1} = N(\bX_{k+1})$ and $L(\bX_{k+1}) = M_{k+1} < M_k$ satisfy \eqref{defBA}. From our recursive definition, it is clear that the sequence $\bX_1, \bX_2, \ldots$ so constructed is a sequence of $(N,L)$-best approximations.\\

For a sequence of $(N,L)$-best approximations, define the exponents
\[
\omega_{N,L} := \limsup_{i\to \infty} \cfrac{\log(M_i)}{\log(Y_i)}, \qquad
\hat{\omega}_{N,L} :=  \liminf_{i\to \infty} \cfrac{\log(M_i)}{\log(Y_{i+1})}.
\]
The quantities are well-defined, and compatible with our previous definitions.

\begin{lemma} \label{suprema}
In the setting \eqref{LENL}, we have
\[
\omega_{\br,\bs}({}^tA) = {\omega}_{N,L},
\qquad
\hat{\omega}_{\br,\bs}({}^tA) = \hat{\omega}_{N,L}.
\]
\end{lemma}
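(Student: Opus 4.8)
The plan is to translate between the two descriptions of the exponents $\omega_{\br,\bs}({}^tA)$ and $\hat\omega_{\br,\bs}({}^tA)$ — the one in terms of the inequalities \eqref{equ-def-w} (with $A$ replaced by ${}^tA$ and roles of $\bs,\br$ swapped, so the height is $\|\bX\|_\bs$ and the approximation quantity is $\inf_{\bp\in\bZ^n}\|{}^tA\bX-\bp\|_\br$), and the one in terms of the best-approximation sequence $(Y_i,M_i)$. The key structural fact to extract from the definition of $(N,L)$-best approximations is a \emph{staircase description}: for any real $T$ with $Y_i \le T < Y_{i+1}$, one has
\[
\min\{\, L(\bX):\bX\in\bZ^m\setminus\{\bzero\},\ N(\bX)\le T\,\} = M_i,
\]
and moreover this minimum is \emph{not} beaten by any $\bX$ with $N(\bX) < Y_{i+1}$. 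This is exactly what property (iii), \eqref{defBA}, together with the monotonicity of $(Y_i)$ and $(M_i)$, encodes; it is the only nontrivial ingredient, and it is the step I expect to require the most care, since one must check that no non-best-approximation point does better than $\bX_i$ in the window $[Y_i,Y_{i+1})$ and that the sequence genuinely exhausts all scales (using $Y_i\to\infty$, $M_i\to0$).

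Granting the staircase description, the ordinary exponent is handled as follows. For the inequality $\omega_{\br,\bs}({}^tA)\ge\omega_{N,L}$: if $\omega < \omega_{N,L}$, then for infinitely many $i$ we have $M_i < Y_i^{-\omega}$, i.e. $\log M_i/\log Y_i > \omega$ along a subsequence; taking $T$ slightly larger than $Y_i$ (say $T = Y_i + \delta$) the point $\bX_i$ satisfies $\|\bX_i\|_\bs < T$ and $\|{}^tA\bX_i-\bp\|_\br = M_i < Y_i^{-\omega} < T^{-\omega}$ for suitable $\bp\in\bZ^n$, giving arbitrarily large valid $T$; hence $\omega_{\br,\bs}({}^tA)\ge\omega$. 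Letting $\omega\uparrow\omega_{N,L}$ gives one direction. Conversely, for $\omega_{\br,\bs}({}^tA)\le\omega_{N,L}$: suppose $(\bp,\bX)$ solves $\|\bX\|_\bs < T$, $\|{}^tA\bX-\bp\|_\br < T^{-\omega}$ with $T$ large; let $i$ be the index with $Y_i\le N(\bX) < Y_{i+1}$ (exists for $N(\bX)$ large, which holds since $T$ is large and $\bX\ne\bzero$ forces $N(\bX)$ bounded below only trivially — one uses that infinitely many such $T$ with genuinely large solutions occur, or else the exponent is $\le 1 \le \omega_{N,L}$). Then $M_i \le L(\bX) < T^{-\omega}$ while $Y_i \le N(\bX) < T$, so $\log M_i/\log Y_i$ — after a short estimate comparing $\log N(\bX)$ to $\log T$, noting $N(\bX)$ and $T$ differ by at most a bounded factor in the relevant regime, or simply using $Y_i < T$ and $M_i < T^{-\omega}$ directly — yields $\limsup_i \log M_i/\log Y_i \ge \omega$, whence $\omega_{N,L}\ge\omega$. (A small subtlety: one must rule out the degenerate case where solutions come only from bounded $\bX$; this is exactly where condition \eqref{infimum}/$\lim M_i=0$ and $\lim Y_i=\infty$ are used, and it matches the hypothesis ensuring the relevant group has full rank.)

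The uniform exponent is the same game with quantifiers flipped, using the best-approximation staircase at the point $Y_{i+1}$ rather than $Y_i$. For $\hat\omega_{\br,\bs}({}^tA)\ge\hat\omega_{N,L}$: if $\hat\omega < \hat\omega_{N,L}$ then $M_i < Y_{i+1}^{-\hat\omega}$ for all large $i$; given any large $T$, pick $i$ with $Y_i \le T < Y_{i+1}$, and then $\bX_i$ satisfies $\|\bX_i\|_\bs = Y_i < T$ and $\|{}^tA\bX_i-\bp\|_\br = M_i < Y_{i+1}^{-\hat\omega}$ — but we need $< T^{-\hat\omega}$ and $T < Y_{i+1}$, so this needs a touch more: one takes instead the minimal $i$ with $Y_{i+1} > T$ and uses $\bX_i$, and the bound $M_i < Y_{i+1}^{-\hat\omega}$ is not quite $T^{-\hat\omega}$; the standard fix is to work with $T$ ranging over the values $Y_{i+1}^{-}$ — actually the clean argument is: for every large $T$ there is $i$ with $Y_i\le T< Y_{i+1}$, and $\bX_i$ gives $L(\bX_i)=M_i$; we want $M_i<T^{-\hat\omega}$, and since $T< Y_{i+1}$ this follows once $M_i\le Y_{i+1}^{-\hat\omega'}$ for $\hat\omega'$ slightly above $\hat\omega$, which holds for large $i$. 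Thus $(^tA,\bzero)$ is uniformly $(\hat\omega,\br,\bs)$-approximable. Conversely $\hat\omega_{N,L}\ge\hat\omega_{\br,\bs}({}^tA)$: if $({}^tA,\bzero)$ is uniformly $(\hat\omega,\br,\bs)$-approximable, then for each large $T$ there is $\bX\ne\bzero$ with $N(\bX)<T$, $L(\bX)<T^{-\hat\omega}$; applying this with $T = Y_{i+1}$ (minus an infinitesimal) forces, by the staircase description, $M_i = \min\{L(\bX):N(\bX)<Y_{i+1}\} < Y_{i+1}^{-\hat\omega+o(1)}$, hence $\liminf_i \log M_i/\log Y_{i+1}\ge\hat\omega$. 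Letting $\hat\omega\uparrow\hat\omega_{\br,\bs}({}^tA)$ finishes. The main obstacle throughout is bookkeeping the $o(1)$ losses incurred by replacing $T$ by the nearest $Y_i$ and by the fact that $N(\bX)=\|\bX\|_\bs$ is an $1/s_i$-power (so comparisons are multiplicative in $\log$), but these are harmless since we only take $\limsup$/$\liminf$ of ratios of logarithms.
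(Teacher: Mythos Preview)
Your proposal is correct and follows essentially the same approach as the paper: use the defining property \eqref{defBA} of best approximations to pass between the inequality definition of the exponents and the quantities $\limsup(-\log M_i/\log Y_i)$ and $\liminf(-\log M_i/\log Y_{i+1})$. The paper streamlines one step you found fussy: in the converse direction for $\omega$, it chooses the index $k$ with $Y_k \le T < Y_{k+1}$ (so that $N(\bX) < T < Y_{k+1}$ forces $M_k \le L(\bX) < T^{-\omega} \le Y_k^{-\omega}$ immediately), rather than indexing by $N(\bX)$ --- this sidesteps your worry about bounded solutions entirely.
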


\begin{proof}
From the definitions, if $M_i\le Y_i^{-\omega}$ for infinitely many indices $i$, then $\omega\le \omega_{\br,\bs}({}^tA)$. This shows that $\omega_{\br, \bs}({}^tA) \ge {\omega}_{N,L}$.

Conversely, for any $\omega < \omega_{\br,\bs}({}^tA)$, we can choose $T > 0$ arbitrarily large such that
\[\|{}^tA\bp-\bq\|_{\br}<T^{-\omega}, \qquad \|\bp\|_{\bs}< T\]
for some $(\bp, \bq)\in (\bZ^m\setminus\{\bzero\})\times\bZ^n$. Let $k$ be the index such that $Y_k\le T< Y_{k+1}$. From the definition of weighted best approximations, we have
\[
M_k\le \|{}^tA\bp-\bq\|_{\br}<T^{-\omega} \le Y_k^{-\omega}.
\]
We conclude that ${\omega}_{N,L} \ge \omega_{\br, \bs}({}^tA)$.

We have proved the first assertion, and the second follows by a similar argument.
\end{proof}

Next we show, under a further assumption on $L$ and $N$, that the sequence $(Y_i)_{i=1}^\infty$ exhibits geometric growth, generalising the fact that continued fraction denominators enjoy this property. This is well-understood in the context of unweighted best rational approximations, see \cite[Lemma 1]{BL2005} and \cite{Lag1, LagAus}.

\begin{lemma}\label{lem-geo-grow}
Suppose that $L$ and $N$ satisfy the slack triangle inequalities 
\begin{align} \notag
 N(\lambda(\ba +\bb)) &\leq \lambda^{\delta}( N(\ba) + N(\bb) ) \qquad (\lam \le 1) \\
\label{NLgeom}
L(U^{-1}(\ba +\bb)) &\leq \frac12 (L(\ba) +L(\bb))
\end{align}
for some $\delta>0$ and all $U \ge U_0(\del)$. Then there exist $c>0$ and $\gamma>1$ such that
  \[Y_i\ge c\gamma^i \qquad (i \in \bN).\]
\end{lemma}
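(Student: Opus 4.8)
The plan is to show that the best-approximation heights cannot grow too slowly by comparing three consecutive approximation vectors $\bX_{i-1}, \bX_i, \bX_{i+1}$ and deriving a lower bound on $Y_{i+1}$ in terms of $Y_{i-1}$. The key point is that $\bX_i - \bX_{i-1}$ (suitably scaled) is a competitor in the best-approximation game: by the second slack triangle inequality in \eqref{NLgeom}, its $L$-value is at most $\tfrac12(L(\bX_i) + L(\bX_{i-1})) = \tfrac12(M_i + M_{i-1}) < M_{i-1}$, provided we apply the inequality with a dilation $U \ge U_0(\delta)$; since $Y_i \to \infty$, this dilation constraint is harmless for large $i$, as $N$-values of the relevant vectors will exceed any fixed threshold. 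Because this competitor has $L$-value strictly below $M_{i-1}$, the defining property \eqref{defBA} of best approximations forces $N\bigl(U^{-1}(\bX_i - \bX_{i-1})\bigr) \ge Y_i$ — otherwise $\bX_i - \bX_{i-1}$ would improve on $\bX_{i-1}$ before index $i$.

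Next I would feed this into the first slack triangle inequality: with $\lambda = U^{-1} \le 1$,
\[
Y_i \le N\bigl(U^{-1}(\bX_i - \bX_{i-1})\bigr) \le U^{-\delta}\bigl(N(\bX_i) + N(\bX_{i-1})\bigr) \le U^{-\delta} \cdot 2 Y_i,
\]
which at first glance only gives $U^{\delta} \le 2$. The resolution is to instead compare $\bX_{i+1} - \bX_i$ (or rather to chain the argument): the vector $U^{-1}(\bX_{i+1}-\bX_i)$ has $L$-value at most $\tfrac12(M_{i+1}+M_i) < M_i$, so by \eqref{defBA} its $N$-value is at least $Y_{i+1}$; hence
\[
Y_{i+1} \le N\bigl(U^{-1}(\bX_{i+1}-\bX_i)\bigr) \le U^{-\delta}\bigl(Y_{i+1} + Y_i\bigr).
\]
This is still the wrong direction. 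The correct move is to note that $U^{-1}(\bX_{i+1}-\bX_i)$ has $N$-value at least $Y_{i+1}$ \emph{and} is bounded above by $U^{-\delta}(Y_{i+1}+Y_i)$, so in fact one should bound $Y_{i+1}$ from \emph{below}: we must have $U^{-\delta}(Y_{i+1} + Y_i) \ge Y_{i+1}$, i.e. $Y_i \ge (U^{\delta} - 1)Y_{i+1}$ — useless unless reversed. So the genuinely productive comparison is the reverse one: consider $U^{-1}(\bX_{i+1} - \bX_{i-1})$, whose $L$-value is $\le \tfrac12(M_{i+1}+M_{i-1}) < M_{i-1}$, forcing $N\bigl(U^{-1}(\bX_{i+1}-\bX_{i-1})\bigr) \ge Y_i$ by \eqref{defBA}; but $N\bigl(U^{-1}(\bX_{i+1}-\bX_{i-1})\bigr) \le U^{-\delta}(Y_{i+1} + Y_{i-1}) \le U^{-\delta} \cdot 2 Y_{i+1}$, whence
\[
Y_{i+1} \ge \tfrac12 U^{\delta} Y_i \ge \cdots,
\]
and iterating every \emph{other} step yields $Y_{i} \ge c\gamma^{i}$ with $\gamma = (U^{\delta}/2)^{1/2} > 1$ once $U$ is fixed large enough that $U^{\delta} > 2$.

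The main obstacle I anticipate is bookkeeping the dilation parameter $U$ correctly: we need a single $U$ with $U \ge U_0(\delta)$ and $U^{\delta} > 2$ (both achievable by taking $U$ large, since $\delta > 0$ is fixed), and then we must check that the slack $L$-inequality is legitimately applicable — i.e. that all the vectors $\bX_{i+1} - \bX_{i-1}$ etc. are nonzero (which follows from $Y_{i+1} > Y_{i-1}$, hence $\bX_{i+1} \ne \bX_{i-1}$) and that the dilation hypothesis is met. There is also the minor issue of handling small indices $i$ (where $U^{-1}(\bX_i - \bX_{i-1})$ might have small $N$-value, violating the $U \ge U_0$ applicability) — but since the bound is only asserted up to the constant $c$, one absorbs finitely many initial terms into $c$. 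Finally, one should double-check the monotonicity used in "$L\bigl(U^{-1}(\bX_{i+1}-\bX_{i-1})\bigr) < M_{i-1}$ implies $N \ge Y_i$": this is exactly the contrapositive of \eqref{defBA} applied with the pair $(\bX_{i-1}, \bX_i)$, using that $N(\bX) < Y_i$ would force $L(\bX) \ge M_{i-1}$.
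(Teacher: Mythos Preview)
There is a genuine gap. The defining property \eqref{defBA} of best approximations applies only to competitors $\bX \in \Lambda \setminus \{\bzero\}$, but the vector $U^{-1}(\bX_{i+1}-\bX_{i-1})$ you feed into it is in general \emph{not} a lattice point: $\bX_{i+1}-\bX_{i-1}$ lies in $\Lambda$, but there is no reason for it to be divisible by $U$ in $\Lambda$. Without this, neither the contrapositive of \eqref{defBA} nor the conclusion $N\bigl(U^{-1}(\bX_{i+1}-\bX_{i-1})\bigr)\ge Y_i$ is available, and the whole chain $Y_{i+1}\ge \tfrac12 U^{\delta}Y_i$ collapses. (Your remark that ``the dilation constraint is harmless for large $i$'' also misreads the hypothesis: $U\ge U_0(\delta)$ is a condition on the scalar $U$, not on the size of the vectors.)

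The paper repairs exactly this point with a pigeonhole step. Fix $U\in\bN$ with $U^{\delta}>3$, set $V=2U^{d}$ where $d$ is the rank of $\Lambda$, and look at $V$ consecutive best approximations $\bX_{i+1},\ldots,\bX_{i+V}$. Since $|\Lambda/U\Lambda|=U^{d}<V$, two of them, say $\bX_{i+j}$ and $\bX_{i+k}$ with $j<k$, are congruent modulo $U\Lambda$, so $\bX:=U^{-1}(\bX_{i+k}-\bX_{i+j})\in\Lambda$. Now \eqref{NLgeom} gives $L(\bX)\le\tfrac12(M_{i+j}+M_{i+k})<M_{i+j}$ and $N(\bX)\le U^{-\delta}(Y_{i+j}+Y_{i+k})\le\tfrac13(Y_{i+j}+Y_{i+k})$; by \eqref{defBA} the first inequality forces $N(\bX)\ge Y_{i+j+1}$, whence $Y_{i+k}\ge 3Y_{i+j+1}-Y_{i+j}\ge 2Y_{i+j+1}$ and therefore $Y_{i+V}\ge 2Y_i$. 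Iterating gives $Y_i\gg\gamma^i$ with $\gamma=2^{1/V}$. Once the lattice membership is secured in this way, your intended comparison argument is essentially the right one.
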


\begin{remark}\label{rem-delta}
Note that the inequalities \eqref{NLgeom} hold in the setting \eqref{LENL} of weighted exponents, with $\delta=\min\{\delta_\bs, \delta_{\br}\}=\min\{s_i,r_j: 1\le i\le m, \: 1\le j \le n\}$. % 
\iffalse
However, it fails to hold in the context of multiplicative exponents, wherein the natural analogue would be $N(\bX) = \Pi_+(\bX)$ and $L(\bX) = \displaystyle \inf_{\bp \in \bZ^m} \Pi (A\bX - \bp)$. This explains why we do not have equality for almost all $\btet$ for the multiplicative version of Theorem \ref{MainThm}.
\fi
\end{remark}

\begin{proof}
Let us take $U\in \bN$ constant, but large enough to ensure that
\begin{equation}\label{equ-delta}
  U^{\delta}>3. %, \text{ where } \delta=\min\{s_i,r_j: 1\le i\le m, 1\le j \le n\}.
\end{equation}
Consider any $V :=  2U^{n} $ consecutive vectors $\bX_{i+1},\ldots, \bX_{i+ V}$. By the pigeonhole principle, there exist indices $j,k$ with $1 \le j<k\le V$ such that $\bX_{i+j}-\bX_{i+k}\in U\bZ^n$. By the slack triangle inequality, the vector
\[
\bX= U^{-1} (\bX_{i+k}-\bX_{i+j})
\]
satisfies
\[
N(\bX)= N( U^{-1} (\bX_{i+k}-\bX_{i+j}) ) \le U^{-\del} (Y_{i+j}+Y_{i+k})\le \frac{1}{3} (Y_{i+j}+Y_{i+k})
\]
and
\[
L(\bX) = L(U^{-1} (\bX_{i+k}-\bX_{i+j})) \le \frac{1}{3} (M_{i+j}+M_{i+k}) < M_{i+j}.  \]
By the definition of $(N,L)$-best approximations, we must have
\[ Y_{i+j+1}\le N(\bX) \le \frac{1}{3} (Y_{i+j}+Y_{i+k}), \]
and consequently
\[
Y_{i+V}\ge Y_{i+k} \ge 3Y_{i+j+1}-Y_{i+j}\ge 2Y_{i+j+1} \ge 2Y_i
\qquad (i \in \bN).
\]
With $\gam = 2^{1/V}$, the inequality $Y_i \gg \gam^i$ now follows by induction.
\end{proof}

\iffalse
\[\bq_{k+1}=\]
Then we choose $\bq_{k+1}$ such that  is smallest among all
\[\|\bq_{k+1}\|_{\br}=\inf\{\|\bq\|_{\br}: \|\bq\|_{\br}>\|\bq_{k}\|_{\br}, \langle A\bq\rangle_{\bs}< \langle %A\bq_k\rangle_{\bs}\} \]
\fi

\section{The Bugeaud--Laurent theorem with weights}
\label{S5}

Our objective in this section is to establish Theorem \ref{MainThm}. First and foremost, we use Lemma \ref{lem-psi} to deduce \eqref{equ-main-ine}. By definition, if $\hat{\omega}>\hat{\omega}_{\bs,\br}(A)$ then $A$ is not uniformly $(\hat{\omega},\bs,\br)$-approximable. Observe that the functions $\phi(T)=T^{-\omega}$ and $\psi(T)=T^{-\hat{\omega}}$ satisfy the condition \eqref{equ-psi-phi} whenever $\omega\hat{\omega}>1$. Therefore $(^tA,\btet)$ is $(\omega, \br, \bs)$-approximable for all $\btet\in \bR^n$, by Lemma \ref{lem-psi}. Since $\omega, \hat{\omega}$ are arbitrary real numbers for which $\omega_{\br,\bs}(^tA,\btet) \ge \omega > \hat \omega^{-1}$, this confirms the second inequality in \eqref{equ-main-ine}. The proof of the first inequality is similar.\\

Next, we show that for any fixed $A$, equality holds in the first inequality of \eqref{equ-main-ine} for almost all $\btet\in \bR^n$; the analogous statement for the second inequality will follow by similar reasoning. We will apply the theory of weighted best approximations in the setting \eqref{LENL}, when $A \in M_{m \times n} (\bR)$ has the property that $G := {^t A} \bZ^m + \bZ^n \le \bR^m$ has maximal rank $m+n$ as a group. This implies \eqref{nonzero} and, by Kronecker's theorem \cite[Chapter III, Theorem IV]{Cas1}, also ensures that the condition \eqref{infimum} is met.

When $G$ does not have maximal rank, the observation made by Bugeaud and Laurent \cite{BL2005} still applies in our weighted framework: the exponents $\omega_{\br, \bs}(^t A, \btet)$ and $\hat \ome _{\br, \bs} (^t A, \btet)$ vanish unless $\btet$ lies in a discrete family of parallel hyperplanes in $\bR^n$. The upshot is that, in this case, equality in \eqref{equ-main-ine} certainly holds almost surely.\\

We proceed on the assumption that $G$ has maximal rank. It suffices to prove that if $0 < \eps < \hat \omega_{\bs,\br}(A)$ then the set of $\btet \in [0,1]^n$ satisfying
 \begin{equation}\label{EpsIneq}
\omega_{\br,\bs}(^tA,\btet)> \frac{1+\eps}{\hat{\omega}_{\bs,\br}(A)-\eps}+\eps
 \end{equation}
\iffalse
\quad \text{ or } \quad \hat{\omega}_{\br,\bs}(^tA,\btet)\ge \frac{1}{\omega_{\bs,\br}(A)-\eps}+\eps
\fi
has Lebesgue measure $0$. Denote by $\langle\cdot,\cdot \rangle$ the inner product on $\bR^n$, that is, write
 \[\langle \bx, \by\rangle =x_1y_1+ \cdots+x_ny_n.\]
We compute that
 \[
|\langle \bx, \by\rangle|\le \sum_{i=1}^{n}|x_iy_i|\le \sum_{i=1}^{n}\|\bx\|_{\br}^{r_i}\|\by\|_{\br}^{r_i} \le n \max \{ \|\bx\|_{\br}^{\delta}  \|\by\|_{\br}^{\delta},  \| \bx\|_{\br} \|\by\|_{\br} \},
\]
 where $\delta$ is as in Remark \ref{rem-delta}.
\iffalse
Consequently,
\[\|\bx\|_{\bs}\|\by\|_{\bs}\ge \min\left\{\frac{1}{n}|\langle \bx, \by\rangle|, 1\right\}^{\frac{1}{\delta}}.\]
\fi

Fix a positive real number $\eps < \hat \omega_{\bs,\br}(A)$, as well as a sequence $\bq_1, \bq_2, \ldots$ of weighted best approximations for ${}^tA$, and for $k \in \bN$ write $Y_k=\| \bq_k\|_{\br}$ as before. We claim that for $\eta=\frac{1}{2}\delta\eps >0$, the set of $\btet\in [0,1]^n$ satisfying \eqref{EpsIneq} is contained in $\displaystyle \limsup_{k \to \infty} S_k$, where
 \begin{equation*}
  S_k=\{\by\in [0,1]^n: \dist(\langle \by, \bq_k\rangle, \bZ)<Y_k^{-\eta}\} \qquad (n \in \bN).
 \end{equation*}
It is easy to check that the Lebesgue measure of $S_k$ does not exceed $2n Y_k^{-\eta}$. Thus, by Lemma \ref{lem-geo-grow} and the first Borel--Cantelli lemma, the theorem will follow from our claim. It remains to confirm the claim.

Suppose $\btet\in [0,1]^n$ satisfies \eqref{EpsIneq}, and put
 \[
\hat{\omega}=\hat{\omega}_{\bs,\br}(A)-\eps, \qquad
\omega=\frac{1+\eps }{\hat{\omega}}+\eps.
\]
By \eqref{EpsIneq} we have $\omega < \omega_{\br, \bs}(^t A, \btet)$, so we may select an arbitrarily large positive real number $T$ such that the inequalities
 \[\|\bp\|_{\bs}<T, \qquad \|^tA\bp-\bq-\btet\|_{\bs}<T^{-\omega}\]
 have a  solution $(\bp,\bq)\in \bZ^m\times(\bZ^n\setminus \{\bzero\})$.
 Let $k$ be the unique index for which $Y_k\le T^{\omega-\eps}< Y_{k+1}$, so that Lemma \ref{suprema} gives
 \[M_k<Y_{k+1}^{-\hat{\omega}}<T^{-\hat{\omega}(\omega-\epsilon)}=T^{-1-\eps}.\]
Since $A$ is the Hermitian adjoint of $^tA$, we obtain
\begin{align}
  \langle\bq_k, \btet\rangle &= \langle\bq_k, \prescript{t}{}{A}\bp\rangle + \langle\bq_k, \bq\rangle -\langle\bq_k, \prescript{t}{}{A}\bp-\bq-\btet\rangle \notag \\
  &\equiv \langle A\bq_k -\bp_k, \bp\rangle-\langle\bq_k, \prescript{t}{}{A}\bp-\bq-\btet\rangle \qquad \mmod 1, \label{equ-basic}
\end{align}
where $\bp_k$ is the integer vector nearest to $A \bq_k$. Moreover, as $T$ is large we have
\[
|\langle A\bq_k-\bp_k, \bp\rangle| \le m \cdot \max\{M_k^{\delta}\|\bp\|_{\bs}^{\delta}, M_k\|\bp\|_{\bs}\} \le m T^{-\delta\eps }<\frac{1}{2}T^{-\eta}
\]
and
\[
|\langle\bq_k, \prescript{t}{}{A}\bp-\bq-\btet\rangle|\le n \cdot \max\{ Y_k^{\delta}T^{-\omega\delta}, Y_kT^{-\omega} \}\le n T^{-\delta\eps}<\frac{1}{2}T^{-\eta}.
\]
The triangle inequality now reveals that $\btet \in \displaystyle \limsup_{k \to \infty} S_k$, establishing our claim and hence the theorem.

\section{Weighted inhomogeneous Bad}\label{S5-1}
In this section, we establish Theorem \ref{thm-bklr}, closely following \cite{BKLR}. We begin by recalling the setup. For $\eps > 0,$ set
\[
\Bad_{\br,\bs}^{\eps}({}^tA) = \{ \btet\in \bR^n: \liminf_{(\bp,\bq)\in (\bZ^{m}\setminus\{\bzero\})\times\bZ^n} \|\bp\|_{\bs}\|{}^tA\bp-\bq-\btet\|_{\br}\ge \eps \},
\]
and 
\[\Bad_{\br,\bs}({}^tA) = \bigcup_{\eps > 0} \Bad_{\br,\bs}^{\eps}({}^tA). \]

It is known that $\Bad_{\br,\bs}({}^tA)$ has full Hausdorff dimension, see \cite{KW2010}, however the task of determining the Hausdorff dimension of $\Bad_{\br,\bs}^{\eps}({}^tA)$ is much more delicate. The case of vectors in $\bR^n$ has been recently studied in \cite{LSS}, where it is proved that for $\eps > 0$ and for an explicit class of vectors $\mathbf{v}$ termed \emph{heavy}, we have
\begin{equation}\label{heavy}
\dim(\Bad_{\br,\bs}^{\eps}(\mathbf{v})) < n.
\end{equation}
Heavy vectors form a set of full Lebesgue measure. 

Subsequently, the unweighted sets $\Bad^{\eps}(A)$ were investigated in \cite{BKLR}, where necessary and sufficient conditions were obtained in dimension $1$, so that $\dim(\Bad^{\eps}(v)) = 1$ for some $\eps > 0$. These conditions were expressed in terms of the continued fraction expansion of $v$, and were shown to be equivalent to $v$ being ``singular on average''. In higher dimensions, \cite[Theorem 1.5]{BKLR} states sufficient conditions, in terms of best approximation vectors for a matrix $A$, to ensure that $\dim(\Bad^{\eps}(A)) = n$ for some $\eps > 0$. Theorem \ref{thm-bklr} is a weighted extension of the latter result. 

We begin by proving  a weighted version of  \cite[Theorem 5.1]{BKLR}.

\begin{thm}\label{thm-bklr-w}
For any $\alpha\in (0,1/2)$, there exists $R=R(\alpha)>1$ with the following property. Let $(\by_k)_{k\ge 1}$ be a sequence in $\bR^n \setminus \{ \bzero \}$ such that $\|\by_{k+1}\|_{\br}/\|\by_{k}\|_{\br}\ge R$ for all $k\ge 1$ and $\lim_{k\rightarrow \infty}\|\by_k\|_{\br}^{1/k}=\infty$. Then the set
  \[S_{\alpha} :=\{\btet\in [0,1]^n: \dist(\langle\by_k,\btet\rangle, \bZ)\ge\alpha \text{ for all } k\ge 1  \}\]
  has Hausdorff dimension $n$.
\end{thm}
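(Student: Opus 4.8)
The plan is to construct a Cantor-type subset $\cK\subseteq S_\alpha$ with $\dim_H\cK=n$; since $S_\alpha\subseteq[0,1]^n$ gives $\dim_H S_\alpha\le n$, this suffices. Following \cite[proof of Theorem~5.1]{BKLR}, but carried out with \emph{weighted boxes} — sets of the form $\prod_{i=1}^n[c_i-\rho^{r_i},c_i+\rho^{r_i}]$, i.e.\ the balls of the quasi-norm $\|\cdot\|_\br$ — I would build a nested sequence $\cK_0\supseteq\cK_1\supseteq\cdots$ of disjoint unions of such boxes of parameters $\rho_k\downarrow 0$, and set $\cK=\bigcap_k\cK_k$. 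One arranges that a box entering stage $k$ has parameter $\rho_{k-1}\asymp 1/\|\by_k\|_\br$; then the oscillation of $\btet\mapsto\langle\by_k,\btet\rangle$ over such a box lies between two constants depending only on $n$ and $\br$ (immediate from the definition of $\|\cdot\|_\br$), so the forbidden set $\cF_k:=\{\btet:\dist(\langle\by_k,\btet\rangle,\bZ)<\alpha\}$ meets the box in $O_{n,\br}(1)$ parallel Euclidean slabs, among which the complementary ``good'' ones — present because $\alpha<\tfrac12$ — occupy a proportion bounded below by a constant $c_1(\alpha)>0$. At stage $k$ one subdivides each box of $\cK_{k-1}$ into its $T_k:=\rho_{k-1}/\rho_k\asymp\|\by_{k+1}\|_\br/\|\by_k\|_\br\ \ (\ge R)$ homothetic sub-boxes of parameter $\rho_k$ and discards those meeting $\cF_k$. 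A routine preliminary reduction — harmless, and easy in the application to Theorem~\ref{thm-bklr} where the $\bq_k$ are nonzero integer vectors so $\|\bq_k\|_\br\ge1$ — lets us assume $\|\by_1\|_\br$ is large enough for the construction to begin inside $[0,1]^n$.

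The quantitative heart is a weighted version of the elementary fact that a hyperplane meets $O_n(T^{n-1})$ of the $T^n$ subcubes of a cube: in the anisotropic grid of $T_k$ sub-boxes, each of the $O_{n,\br}(1)$ hyperplanes bounding the slabs of $\cF_k$ meets only $O_{n,\br}(T_k^{1-\delta_\br})=o(T_k)$ sub-boxes, so every sub-box not among these exceptional ones lies entirely inside or entirely outside $\cF_k$. Together with the lower bound $c_1(\alpha)$ on the good proportion, this shows that at least $c(\alpha)\,T_k$ of the $T_k$ sub-boxes survive stage $k$, where $c(\alpha)=c(\alpha,n,\br)>0$ is \emph{independent of the scale}. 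This is the only place the largeness of $R$ is used: one needs $T_k\ge R\ge T_0(\alpha,n,\br)$ so that the $o(T_k)$ error is dominated. One should also record, for the dimension count, a quantitative statement that the surviving sub-boxes are not confined to a lower-dimensional subgrid — that for each coordinate $i$ they meet a positive proportion of the grid layers transverse to the $i$-th axis — which again uses that the slabs of $\cF_k$ are genuine (codimension one) and $\alpha<\tfrac12$.

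Iterating, $\cK_k$ consists of $\#\cK_k\gg c(\alpha)^k\rho_k^{-1}\asymp c(\alpha)^k\|\by_{k+1}\|_\br$ boxes of parameter $\rho_k\asymp1/\|\by_{k+1}\|_\br$; endow $\cK$ with the probability measure $\mu$ giving each box of $\cK_k$ mass $1/\#\cK_k$. A mass-distribution estimate for $\mu$ on Euclidean balls — carried out carefully enough to cope with the anisotropy of the boxes — then yields
\[
\dim_H\cK\ \ge\ n\ -\ O\!\left(\frac{k\log(1/c(\alpha))}{\log\|\by_{k+1}\|_\br}\right),
\]
and the hypothesis $\|\by_k\|_\br^{1/k}\to\infty$ is precisely the statement that $k/\log\|\by_{k+1}\|_\br\to0$, so the right-hand side tends to $n$. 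Hence $\dim_H\cK=n$, and therefore $\dim_H S_\alpha=n$.

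The main obstacle I anticipate is this last mass-distribution estimate. Because the boxes of the construction are highly anisotropic — their sides $\rho_k^{r_i}$ range over the orders $\rho_k^{\delta_\br}$ down to $\rho_k^{\rho_\br}$ — a crude ball count gives only $\dim_H\cK\ge 1/\rho_\br$, which is strictly less than $n$ as soon as the weights are non-trivial. To reach $n$ one must genuinely use the spreading property of the surviving sub-boxes, showing that the renormalisations of $\mu$ to boxes of the construction remain, at all but the very smallest scales, comparable to Lebesgue measure up to subexponential factors; this is where the weighted argument departs from the cube-based one of \cite{BKLR}, and it is of the same nature as the full-dimensionality results for weighted badly approximable sets. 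The remaining ingredients — existence and disjointness of the boxes, the harmless non-integrality of the ratios $T_k$, the exact form of the mass-distribution principle, and the initial reduction on $\|\by_1\|_\br$ — are routine and can be organised via the generalised Cantor set formalism.
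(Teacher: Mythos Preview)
Your approach is essentially the paper's: a Cantor construction with weighted boxes $\prod_i [0, Y_k^{-r_i}]$ (where $Y_k = \|\by_k\|_\br$), a count of surviving children at each stage, and the mass distribution principle. The one substantive divergence is that you over-anticipate the difficulty of the final mass-distribution estimate and propose machinery (a spreading property for the surviving boxes) that the paper does not need.

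The anisotropy is a non-issue once one chooses the comparison scale correctly. Given a Euclidean ball $B$ of radius $r$, pick $k$ with $Y_{k+1}^{-\delta_\br} < r \le Y_k^{-\delta_\br}$. Then every side $Y_{k+1}^{-r_i}$ of a level-$(k+1)$ box is at most $Y_{k+1}^{-\delta_\br} < r$, so those boxes are small in \emph{every} coordinate direction, and a straight volume comparison shows $B$ meets at most $4^n r^n Y_{k+1}$ of them. Since $\#\cJ_{k+1} \ge c^k Y_1^{-1} Y_{k+1}$, the factors $Y_{k+1}$ cancel and one obtains $\mu(B) \le C_1 r^n c^{-k}$; combined with $r \le Y_k^{-\delta_\br}$ this gives $\mu(B) \le C_1 r^{\,n - k|\log c|/(\delta_\br \log Y_k)}$, and the hypothesis $Y_k^{1/k}\to\infty$ kills the correction. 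At no point is the ball compared with a single anisotropic box, so the $1/\rho_\br$ loss you fear never materialises, and no spreading property is required.

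A secondary simplification: for the survival count the paper avoids your hyperplane-intersection estimate $O(T_k^{1-\delta_\br})$ altogether. If a child box meets $\{\btet: \dist(\langle \by_k,\btet\rangle,\bZ) < \alpha\}$, then it lies inside the fattened set with $\alpha$ replaced by $\beta = \alpha + nR^{-\delta_\br}$; dividing the Lebesgue measure of the latter within the parent by the child volume bounds the number of discarded children by $2\beta\, Y_k^{-1}Y_{k+1}$. Choosing $R$ so that $c := 1 - 2\alpha - 3nR^{-\delta_\br} > 0$ then yields at least $c\, Y_k^{-1}Y_{k+1}$ survivors. Your hyperplane count is valid but is more work than needed.
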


\begin{proof}
Let $\alpha\in (0,1/2)$ be fixed. Choose $R$ large enough that
\[c:=1-2\alpha-3nR^{-\delta_\br}>0.\]
We proceed to verify the theorem for such $R$. By the mass distribution principle \cite[Chapter 4]{Falconer}, it suffices to demonstrate the existence of a measure $\mu$, supported on $S_\alpha$, with the following property:  if $\eps > 0$ then there exist $C(\eps), r_0(\eps) >0$  such that for any Euclidean ball $B$ of radius $r \in (0, r_0(\eps))$ we have
\begin{equation} \label{mtp}
\mu(B)\le C(\eps)r^{n-\eps}.
\end{equation}
The measure $\mu$ will be constructed in a standard way. Write
\[
Y_k=\|\by_k\|_{\br}, \qquad
Z_{k, \alpha}=\{\btet\in [0,1]^n: \dist(\langle\by_k,\btet\rangle, \bZ) < \alpha  \}.
\]

The first step is to construct two sequences $(\cI_k)_{k\ge 1}$ and $(\cJ_k)_{k\ge 1}$ of collections of subsets of $I_0 := [0,1]^n$. Set $\cI_0= \{I_0\}$. The sequences $(\cI_k)_{k\ge 1}$ and $(\cJ_k)_{k\ge 1}$ will be defined recursively, so that $\cJ_k\subset \cI_k$, and so that $\cI_k$ comprises a collection of translates of $\Pi(Y_k)$, where \[\Pi(Y_k)=[0,Y_k^{-r_1}]\times\cdots\times [0,Y_k^{-r_n}].\]
  It is easily seen that there exists a collection $\cP_{k+1}$ of translates of $\Pi(Y_{k+1})$ satisfying:
  \begin{enumerate}
    \item Elements from $\cP_{k+1}$ are subsets of $\Pi(Y_{k})$ and have mutually disjoint interiors;
    \item \[\Delta_{k+1}:=\#\cP_{k+1} =\prod_{1\le i\le n}\lfloor Y_{k}^{-r_i}Y_{k+1}^{r_i}\rfloor. \]
  \end{enumerate}
We record the following lower bound on $\Delta_{k+1}$ for later use:
\begin{align*}
  \Delta_{k+1} &\ge \prod_{1\le i\le n}(Y_{k}^{-r_i}Y_{k+1}^{r_i}-1)\ge Y_k^{-1}Y_{k+1}(1-\sum_{1\le i\le n} Y_k^{r_i}Y_{k+1}^{-r_i}) \\&\ge (1-nR^{-\delta_\br})Y_k^{-1}Y_{k+1}.
\end{align*}
 Now assume that $\cI_{k}$ and $\cJ_k$ have been defined. For any $I=\btet+ \Pi(Y_k)\in \cI_{k}$, set
 \[\cI(I)=\{\btet+I_1: I_1\in \cP_{k+1}\}  \text{ and } \cJ(I)=\{I_2\in \cI(I): I_2\cap Z_{k,\alpha}=\emptyset\}.\]
Then choose
 \[\cI_{k+1}=\bigcup_{I\in \cI_{k}} \cI(I) \text{ and } \cJ_{k+1}=\bigcup_{I\in \cJ_{k}} \cJ(I).\]

 For any $I\in \cI_{k}$, we have $\# \cI(I)=\Delta_{k+1}$. Next, we estimate $\# \cJ(I)$. Observe that if $I_3\in \cI(I)$, with $I_3\cap Z_{k,\alpha}\ne \emptyset$, then $I_3\subset Z_{k,\beta}$, where
 \[\beta=\alpha+\sum_{1\le i\le n} Y_k^{r_i}Y_{k+1}^{-r_i}\le \alpha+nR^{-\delta_\br}.\]
Hence
 \[\# \cJ(I)\ge \Delta_{k+1}- \frac{\lambda(Z_{k,\beta}\cap I)}{\lambda(I_3)} \ge \Delta_{k+1}-2\beta Y_k^{-1}Y_{k+1}\ge cY_k^{-1}Y_{k+1},\]
 where $\lambda$ denotes Lebesgue measure on $\bR^n$. Consequently, we have the lower bound
 \[\# \cJ_{k}\ge c^{k-1}Y_1^{-1}Y_{k}.\]

We are ready to specify the measure $\mu$, as promised. Define
 \[
\mu=\lim_{k \to \infty} \mu_k,
\]
where
\[
\mu_k= (\# \cJ_{k})^{-1}Y_k \sum_{I\in \cJ_k} \lambda|_{I}.
\]
 %\mu_k=(\sum_{I\in \cJ_k}\lambda(I))^{-1}\sum_{I\in \cJ_k} \lambda|_{I}
It follows from the construction that $\mu$ is a probability measure supported on $S_{\alpha}$. Let $B$ be a ball of radius $r \in (0, r_0(\eps))$. Choose $k$ such that 
\[
Y_{k+1}^{-\delta_{\br}}< r\le Y_{k}^{-\delta_{\br}}.
\]
Then $B$ can be covered by at most $4^nr^nY_{k+1}$ many elements from $\cI_{k+1}$. Thus
 \[\mu_{k+1}(B)\le 4^nr^nY_{k+1}(\# \cJ_{k+1})^{-1}\le 4^nr^nc^kY_1\le C_1r^{n - \frac{k\log c}{\delta_{\br}\log Y_k}},\]
 for some $C_1>0$. By our assumption that $Y_k^{1/k} \to \infty$, this implies \eqref{mtp}, which completes the proof.
\end{proof}

We are equipped to prove Theorem \ref{thm-bklr}. Fix $\alpha\in (0,1/2)$, and let $R=R(\alpha)$ be as in Theorem \ref{thm-bklr-w}. For $k \in \bN$, write 
\[
Y_k=\|\bq_k\|_{\br}, \qquad M_k = \inf_{\bp \in \bZ^m} \| A \bq_k - \bp \|_{\bs}
= \| A \bq_k - \bp_k \|_\bs,
\]
as before. Since $\displaystyle \lim_{k \rightarrow \infty} Y_k^{1/k}=\infty$, the proof of \cite[Theorem 2.2]{BKLR} reveals that there exists a function $\phi:\bN \rightarrow \bN$ for which
\[Y_{\phi(k+1)}\ge RY_{\phi(k)} \quad \text{ and } \quad Y_{\phi(k)+1}\ge R^{-1}Y_{\phi(k+1)}.\]
%\[\|\bq_{\phi(i)}\|_{\br}\ge R\|\bq_{\phi(i-1)}\|_{\br} \quad \text{ and } \quad \|\bq_{\phi(i-1)+1}\|_{\br}\ge %R^{-1}\|\bq_{\phi(i)}\|_{\br}.\]
By Theorem \ref{thm-bklr-w}, it suffices to show that the set
 \[S :=\{\btet\in [0,1]^n: \dist(\langle\bq_{\phi(k)},\btet\rangle, \bZ)\ge\alpha \text{ for all } k\ge 1  \}\]
 is a subset of $\Bad_{\br,\bs}^{\eps }({}^tA)$, for some $\eps >0$.

Let $\btet\in S$ . For any $(\bp,\bq)\in (\bZ^{m}\setminus\{\bzero\})\times \bZ^n$, let $k$ be the unique index for which
\[
Y_{\phi(k)}\le \eps_1^{-1}\|\bp\|_{\bs}<Y_{\phi(k+1)},
\]
where $\eps_1=R^{-1}(\alpha/2m)^{1/\delta}$. With $\del$ as in Remark \ref{rem-delta}, we have
\begin{align*}
  |\langle A\bq_{\phi(k)}-\bp_{\phi(k)}, \bp\rangle| &\le m \cdot \max\{M_{\phi(k)}^{\delta}\|\bp\|_{\bs}^{\delta}, M_{\phi(k)}\|\bp\|_{\bs}\} \\
  &\le m \cdot \max\{Y_{\phi(k)+1}^{-\delta}\|\bp\|_{\bs}^{\delta}, Y_{\phi(k)+1}^{-1} \|\bp\|_{\bs}\} \le m (R\eps_1)^{\delta}\le \frac{\alpha}{2}.
\end{align*}
Now the calculation \eqref{equ-basic} yields, for any $(\bp,\bq)\in (\bZ^{m}\setminus\{\bzero\})\times\bZ^n$, the inequality
\[ |\langle\bq_{\phi(k)}, {}^t{A}\bp-\bq-\btet\rangle| \ge \frac{\alpha}{2}.\]
Hence
 \[Y_{\phi(k)}\|{}^t{A}\bp-\bq-\btet\|_{\br}\ge \min\left\{\frac{\alpha}{2n}, \left(\frac{\alpha}{2n}\right)^{1/\delta}\right\}=\left(\frac{\alpha}{2n}\right)^{1/\delta},\]
 which implies
 \[\| \bp \|_{\bs} \cdot \|{}^t{A}\bp-\bq-\btet\|_{\br} \ge \eps_1 \left(\frac{\alpha}{2n}\right)^{1/\delta}= \frac{1}{R} \left(\frac{\alpha^2}{4mn}\right)^{1/\delta}. \]
Therefore $\btet \in \Bad_{\br, \bs}^\eps(^t A)$, for $\eps =  \frac{1}{R} \left(\frac{\alpha^2}{4mn} \right)^{1/\del}$, completing the proof of Theorem \ref{thm-bklr}.

\section[Inhomogeneous DA on manifolds]{Applications to inhomogeneous Diophantine approximation on manifolds} \label{S2}

Transference principles such as Theorem \ref{MainThm} play a crucial role in inhomogeneous Diophantine approximation on manifolds by providing \emph{lower bounds} for inhomogeneous Diophantine exponents. In this section, we illustrate this principle in a number of examples. The corresponding upper bound in each case is found using a (suitable adaptation of) the transference principle of Beresnevich and Velani. We begin with

 \begin{prop}
Assume that $\omega_{\bs, \br}(A) = 1$. Then for every $\btet \in \bR^m$, $$\omega_{\bs, \br}(A, \btet) \geq 1,$$
with equality for Lebesgue-almost all $\btet$.
\end{prop}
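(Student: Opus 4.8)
The plan is to obtain this as a corollary of Theorem~\ref{MainThm} applied to the transpose matrix ${}^tA$, together with the weighted Dyson transference of Theorem~\ref{thm-khin}. First I would propagate the hypothesis to the transpose: by the ``$\omega=1$ if and only if ${}^t\omega=1$'' part of Theorem~\ref{thm-khin}, the assumption $\omega_{\bs,\br}(A)=1$ forces $\omega_{\br,\bs}({}^tA)=1$. Since $\hat\omega_{\br,\bs}({}^tA)\le\omega_{\br,\bs}({}^tA)=1$, while the weighted form of Dirichlet's theorem (the Remark following Theorem~\ref{MainThm}) gives $\hat\omega_{\br,\bs}({}^tA)\ge 1$, I conclude that
\[
\hat\omega_{\br,\bs}({}^tA)=1.
\]

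Next I would invoke Theorem~\ref{MainThm} with ${}^tA\in M_{n\times m}(\bR)$ in the role of ``$A$'' — so the dimensions $m,n$ and the weights $\bs,\br$ are interchanged, and a shift $\btet\in\bR^m$ plays the role of ``$\btet\in\bR^n$''. Because ${}^t({}^tA)=A$, the first inequality of \eqref{equ-main-ine} becomes
\[
\omega_{\bs,\br}(A,\btet)\ge\hat\omega_{\br,\bs}({}^tA)^{-1}=1
\]
for every $\btet\in\bR^m$, and the ``equality for Lebesgue-almost all $\btet$'' clause of Theorem~\ref{MainThm} upgrades this to $\omega_{\bs,\br}(A,\btet)=1$ for Lebesgue-almost every $\btet\in\bR^m$. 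This is exactly the assertion to be proved.

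I do not expect a genuine obstacle: the proposition is essentially immediate from the two transference theorems already established. The only point requiring care is the transposition bookkeeping — consistently swapping $(m,\bs)\leftrightarrow(n,\br)$ when applying Theorems~\ref{thm-khin} and \ref{MainThm} to ${}^tA$ — and observing that no auxiliary hypothesis on $A$ (such as the maximal-rank condition on ${}^tA\bZ^m+\bZ^n$) is needed, since Theorem~\ref{MainThm} already subsumes the degenerate case. As a minor variant, one could instead reach $\hat\omega_{\br,\bs}({}^tA)=1$ in a single step by citing the uniform version of Theorem~\ref{thm-khin}, which likewise gives ``$\hat\omega=1$ if and only if ${}^t\hat\omega=1$''.
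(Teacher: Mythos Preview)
Your proposal is correct and follows essentially the same route as the paper: use Theorem~\ref{thm-khin} to pass from $\omega_{\bs,\br}(A)=1$ to $\omega_{\br,\bs}({}^tA)=1$, sandwich $\hat\omega_{\br,\bs}({}^tA)$ between $1$ (weighted Dirichlet) and $\omega_{\br,\bs}({}^tA)=1$, then apply Theorem~\ref{MainThm} to ${}^tA$ for both the lower bound and the almost-everywhere equality. Your bookkeeping on the transposition is accurate, and your remark that no maximal-rank hypothesis is needed is also correct.
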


\begin{proof}

We follow the argument in \cite{BV2010}. Let $A \in M_{m\times n}(\bR)$ be as above. By the weighted version of Dyson's transference principle (Theorem \ref{thm-khin}), we have $\omega_{\br, \bs}({}^tA) = 1$. Now applying the weighted form of Dirichlet's approximation theorem and the trivial inequality $\omega_{\br, \bs}(^t A) \geq  \hat{\omega}_{\br, \bs}({}^t A)$ yields
\[
1 = \omega_{\br, \bs}({}^tA) \ge \hat{\omega}_{\br, \bs}({}^t A) \ge 1.
\]
These inequalities must be equalities, so in particular $\hat{\omega}_{\br, \bs}({}^t A) = 1$. By Theorem \ref{MainThm}, we finally have $\omega_{\bs, \br}(A,\btet) \geq 1$, with equality for almost all $\btet$.
\end{proof}

As a consequence we have:
 \begin{cor}\label{cor:upper}
Let $\mu$ be a measure on $M_{m\times n}(\bR)$. Assume that $\omega_{\bs, \br}(A) = 1$ for $\mu$-almost all $A$. Then for every $\btet \in \bR^m$ and $\mu$-almost all $A$, we have $$\omega_{\bs, \br}(A, \btet) \geq 1.$$
\end{cor}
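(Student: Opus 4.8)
The plan is to read the corollary off the preceding proposition, with no additional work. By hypothesis there is a $\mu$-conull set $\cA \subseteq M_{m\times n}(\bR)$ on which $\omega_{\bs,\br}(A) = 1$. Fix $\btet \in \bR^m$. For each $A \in \cA$ the proposition applies verbatim, its only hypothesis $\omega_{\bs,\br}(A) = 1$ being met, and it delivers $\omega_{\bs,\br}(A,\btet) \ge 1$. Since $\cA$ has full $\mu$-measure and is independent of $\btet$, this is precisely the assertion of the corollary; in fact one obtains the formally stronger statement in which the conull set of matrices does not depend on $\btet$.

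There is essentially no obstacle here: all the substance sits in the proposition, which strings together the weighted Dyson transference inequality (Theorem \ref{thm-khin}), used to pass from $\omega_{\bs,\br}(A) = 1$ to $\omega_{\br,\bs}({}^tA) = 1$; the weighted Dirichlet theorem together with the trivial bound $\omega_{\br,\bs}({}^tA) \ge \hat{\omega}_{\br,\bs}({}^tA)$, used to squeeze $\hat{\omega}_{\br,\bs}({}^tA) = 1$; and finally Theorem \ref{MainThm}, which gives $\omega_{\bs,\br}(A,\btet) \ge \hat{\omega}_{\br,\bs}({}^tA)^{-1} = 1$. The only points worth flagging are that the hypothesis already furnishes a $\mu$-measurable conull set, so no separate measurability discussion is required, and that we do not attempt to upgrade the almost-everywhere \emph{equality} in the proposition to a joint statement over $(A,\btet)$, since the corollary makes no such claim (this would require a Fubini-type argument over the product).

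In the applications envisaged in this section, $\mu$ will typically be the pushforward of Lebesgue measure under a parametrisation of a submanifold of $M_{m\times n}(\bR)$, and the hypothesis $\omega_{\bs,\br}(A) = 1$ for $\mu$-almost all $A$ is a (weighted) extremality statement for that manifold, available from known Diophantine or homogeneous-dynamical results. The corollary then provides the lower bound $\omega_{\bs,\br}(A,\btet) \ge 1$ for inhomogeneous weighted exponents along the manifold; combined with the matching upper bound supplied by a suitable adaptation of the Beresnevich--Velani transference principle, it yields $\omega_{\bs,\br}(A,\btet) = 1$ for $\mu$-almost all $A$ and every $\btet$.
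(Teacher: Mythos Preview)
Your proposal is correct and matches the paper's approach exactly: the paper presents the corollary with the phrase ``As a consequence we have'' and no further argument, so the intended proof is precisely to apply the preceding proposition to each $A$ in the $\mu$-conull set on which $\omega_{\bs,\br}(A)=1$. Your additional remarks about the conull set being independent of $\btet$ and about not needing a Fubini argument are accurate and harmless elaborations.
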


We now discuss the above corollary in the context of some interesting measures. Dirichlet's theorem implies that $\omega(A) \geq 1$ for every $A \in M_{m\times n}(\bR)$. A matrix $A \in M_{m\times n}(\bR)$ is said to be \emph{very well approximable} if $\omega(A) > 1$.  Metric Diophantine approximation on manifolds is concerned with the question of whether typical Diophantine properties in $M_{m\times n}(\bR)$, i.e. those which are generic for Lebesgue measure, are inherited by proper submanifolds (or, more generally, supports of suitable measures).

In $1932$, Mahler \cite{Mah1932} conjectured that for almost every $x \in \bR$ the vector
\begin{equation}\label{def:Mahler}
(x, x^2, \dots, x^n)
\end{equation}
is not very well approximable. A measure $\mu$ is called \emph{extremal} if $\mu$-almost every $A$ is not very well approximable. A manifold is \emph{extremal} if a measure in its natural volume class---for example, the pushforward of Lebesgue measure by a  map parametrising the manifold---is extremal. We can similarly define (weighted) \emph{inhomogeneously very well approximable} matrices (see \cite[\S\S 6.1--6.2]{BKM15}) and (inhomogeneously) \emph{very well multiplicatively approximable} matrices, as well as the corresponding notions of extremality. \\

Mahler's conjecture was resolved by Sprind\v{z}uk \cite{Sp1, Sp2}, who in turn formulated a more general conjecture \cite{Sp3} that was proved by Kleinbock and Margulis \cite{KM}. They showed that almost every point on a smooth, ``nondegenerate" submanifold of $\bR^n$ is not very well (multiplicatively) approximable. Subsequently, there have been numerous advances in the subject. The inhomogeneous version of Sprind\v{z}uk's conjectures were established by Beresnevich and Velani \cite{BV2010} using their \emph{transference principle}. They proved, for instance, the following result  (\cite[Theorem 1]{BV2010}).
\begin{thm}
Let $\mu$ be a measure on $M_{m\times n}(\bR)$. If $\mu$ is contracting almost everywhere, then for every $\btet \in \bR^m$, we have that
$$\omega(A, \btet) = 1$$
for $\mu$-almost every $A \in M_{m\times n}(\bR)$.
\end{thm}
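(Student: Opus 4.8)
The plan is to establish the equality $\omega(A,\btet) = 1$ by proving the two bounds $\omega(A,\btet) \ge 1$ and $\omega(A,\btet) \le 1$ separately, valid for $\mu$-almost every $A$ and every fixed $\btet \in \bR^m$. The lower bound is the soft direction and is delivered by the transference results of this paper; the upper bound carries the real content and is where the hypothesis that $\mu$ be contracting is genuinely used, through the transference principle of Beresnevich and Velani.

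\emph{Lower bound.} The starting point is that a contracting measure is \emph{extremal}, i.e.\ $\omega(A) = 1$ for $\mu$-a.e.\ $A$; this is essentially built into the contracting hypothesis (one covers $\{A : \omega(A) > 1+\eps\}$ by the standard thin neighbourhoods $\{A : \|A\bq - \bp\|_{\bs} < \|\bq\|_{\br}^{-(1+\eps)}\}$ of rational affine subspaces, bounds their $\mu$-measure using the contracting property, and invokes the first Borel--Cantelli lemma). Granting extremality, one applies Corollary \ref{cor:upper} specialised to the unweighted weights $\bs = (1/m,\ldots,1/m)$ and $\br = (1/n,\ldots,1/n)$: it gives $\omega(A,\btet) \ge 1$ for every $\btet \in \bR^m$ and $\mu$-a.e.\ $A$. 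The mechanism driving the Corollary is Theorem \ref{MainThm}, via $\omega(A,\btet) \ge \hat\omega({}^tA)^{-1}$ together with $\hat\omega({}^tA) = 1$, the latter following from the weighted Dyson inequality (Theorem \ref{thm-khin}) and Dirichlet's theorem exactly as in the Proposition preceding Corollary \ref{cor:upper}.

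\emph{Upper bound.} Fix $\btet \in \bR^m$ and $\eps > 0$; it suffices to show that $V_\eps := \{A : \omega(A,\btet) > 1+\eps\}$ is $\mu$-null, since then running $\eps$ through a sequence tending to $0$ and combining with the lower bound yields $\omega(A,\btet) = 1$ for $\mu$-a.e.\ $A$. Now $V_\eps$ is contained in the $\limsup$, over $T \to \infty$ along (say) $T = 2^N$, of the sets $\bigcup_{\bp,\, \|\bq\|_{\br} < T} R_T(\bp,\bq)$, where
\[
R_T(\bp,\bq) := \{ A \in M_{m \times n}(\bR) : \|A\bq - \bp - \btet\|_{\bs} < T^{-(1+\eps)} \}
\]
is a thin neighbourhood of an affine subspace --- precisely a $\btet$-translate of the homogeneous subspace governing extremality. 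The contracting hypothesis is designed so as to control the $\mu$-measure of such neighbourhoods in terms of that of their homogeneous counterparts, with bounds summable over the admissible pairs $(\bp,\bq)$ and, critically, uniform in the shift $\btet$; the first Borel--Cantelli lemma then forces $\mu(V_\eps) = 0$. This is the Beresnevich--Velani transference principle, and I expect the entire difficulty of the theorem to reside here: extracting from the ``contracting'' condition a measure estimate for the inhomogeneous neighbourhoods $R_T(\bp,\bq)$ that is both quantitatively strong enough for Borel--Cantelli and uniform in $\btet$. The lower bound, by contrast, is a formal consequence of the transference inequality \eqref{equ-main-ine} and Dirichlet's theorem.
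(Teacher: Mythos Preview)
Your proposal is correct and follows essentially the same two-step outline that the paper describes (which is the original argument of Beresnevich--Velani): the lower bound $\omega(A,\btet)\ge 1$ via extremality, Dyson's transference, and the Bugeaud--Laurent inequality (packaged here as Corollary~\ref{cor:upper}), and the upper bound via the Beresnevich--Velani homogeneous--inhomogeneous transference principle exploiting the contracting hypothesis. The paper does not itself supply a proof beyond this sketch---it attributes the result to \cite[Theorem 1]{BV2010} and summarises the strategy in a single paragraph---so your level of detail already matches or exceeds what the paper provides.
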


We refer the interested reader to the papers above for the definitions of the terms ``nondenegenerate" and ``contracting". Since there is no inhomogeneous version of Dirichlet's theorem, the notion of extremality is more delicate and both upper and lower bounds for the exponent are required. The proof of the Theorem above accordingly has two steps. The lower bound for the exponent proceeds using the transference results of Bugeaud and Laurent, combined with Dyson's transference principle. The upper bound is proved using a homogeneous--inhomogeneous transference principle introduced for the purpose, by Beresnevich and Velani.

Subsequently, Beresnevich, Kleinbock and Margulis \cite{BKM15} considered the more general problem of Diophantine approximation on manifolds in the space of matrices. We recall their notation. Let $X$ be a Euclidean space. Given $x \in X$ and $r > 0$, let $B(x,r)$ denote the open ball of radius $r$ centred at $x$. If $V = B(x,r)$ and $c > 0$, let $cV$ stand for $B(x,cr)$. Let $\mu$ be a Radon measure on $X$. Given $V \subset X$ such that $\mu(V) > 0$ and a function $f:V \to \bR$, let
$$\|f\|_{\mu, V} := \sup_{x \in V \cap \supp~\mu} |f(x)|. $$
A Radon measure $\mu$ will be called \emph{$D$-Federer} on $U$, where $D > 0$ and $U$ is an open subset of $X$, if $\mu(3V) < D\mu(V)$ for any ball $V \subset U$ centred in the support of $\mu$. The measure $\mu$ is called \emph{Federer} if for $\mu$-almost every point $x \in X$ there is a neighbourhood $U$ of $x$ and $D > 0$ such that $\mu$ is $D$-Federer on $U$.

Given $C, \alpha > 0$ and an open subset $U \subset X$, we say that $f : U \to \bR$ is called \emph{$(C, \alpha)$-good} on $U$ with respect to the measure $\mu$ if for any ball $V \subset U$ centred in $\supp \: \mu$ and any $\varepsilon > 0$ one
has
$$ \mu(\{x \in V ~:~|f(x)| < \varepsilon\}) \leq C\left(\frac{\varepsilon}{\|f\|_{\mu, V}} \right)^{\alpha} \mu(V).$$
Given $f = (f_1, \dots, f_N) : U \to \bR^N$, we say that the pair $(f, \mu)$ is \emph{good} if for $\mu$-almost every $x \in U$ there is a neighbourhood $V \subset U$ of $x$ and $C, \alpha > 0$ such that any linear combination of $1, f_1, \dots, f_N$ over $\bR$ is $(C, \alpha)$-good on $V$. The pair $(f, \mu)$ is called \emph{non-planar} if
for any ball $V \subset U$ centred in $\supp~\mu$ the set $f(V \cap \supp~\mu)$ is not contained in any affine hyperplane of $\bR^N$. Non-planarity is a generalisation of the nondegeneracy property of smooth manifolds mentioned above. The theorem below deals with Diophantine properties of submanifolds in the space of matrices, and therefore requires a slightly more general notion. Informally, we say that the pair $(F, \mu)$ is weakly non-planar if $F(\supp \: \mu)$ does not locally lie entirely inside a certain polynomial hypersurface. The precise definition may be found in \cite[\S 2]{BKM15}.

The following is \cite[Theorem 6.5]{BKM15}.

\begin{thm} \label{thm:BKM}
 Let $U$ be an open subset of $\bR^d$, $\mu$ a Federer measure on $U$, and $F : U \to M_{m \times n}(\bR)$ a continuous map such that $(F, \mu)$ is
 \begin{enumerate}
 \item good, and
 \item weakly non-planar.
 \end{enumerate}
 Then $F_{*}\mu$ is inhomogeneously $(\bs, \br)$-extremal for any $(m + n)$-tuple $(\bs, \br)$ of weights.
\end{thm}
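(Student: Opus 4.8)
The plan is to split inhomogeneous $(\bs,\br)$-extremality of $F_*\mu$ into a lower bound $\ome_{\bs,\br}(F(x),\btet)\ge 1$ and an upper bound $\ome_{\bs,\br}(F(x),\btet)\le 1$, each to hold, for every fixed $\btet\in\bR^m$, for $\mu$-almost every $x\in U$, and to obtain both halves from the homogeneous extremality of $F_*\mu$ together with a transference principle. The shared input is the homogeneous counterpart of the theorem: if $(F,\mu)$ is good and weakly non-planar with $\mu$ Federer, then $F_*\mu$ is homogeneously $(\bs,\br)$-extremal, that is $\ome_{\bs,\br}(F(x))=1$ for $\mu$-a.e.\ $x$. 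This is established in \cite{BKM15} via the quantitative non-divergence method of Kleinbock--Margulis \cite{KM}, and I would import it as a black box.

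For the lower bound I would simply invoke Corollary \ref{cor:upper}. Since $\ome_{\bs,\br}(A)=1$ for $F_*\mu$-a.e.\ $A$, that corollary yields $\ome_{\bs,\br}(A,\btet)\ge 1$ for every $\btet\in\bR^m$ and $F_*\mu$-a.e.\ $A$. Unwinding its proof: one passes from $\ome_{\bs,\br}(A)=1$ to $\ome_{\br,\bs}({}^tA)=1$ by the weighted Dyson inequality (Theorem \ref{thm-khin}), the weighted form of Dirichlet's theorem then squeezes $\hat\ome_{\br,\bs}({}^tA)=1$, and finally the weighted Bugeaud--Laurent inequality (Theorem \ref{MainThm}), applied to ${}^tA$ with the two weight vectors interchanged, gives $\ome_{\bs,\br}(A,\btet)\ge\hat\ome_{\br,\bs}({}^tA)^{-1}=1$. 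Nothing further is needed here; this is precisely the payoff of the transference results of the preceding sections.

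The upper bound is where the real work sits. Here I would again start from homogeneous extremality---equivalently, the fact that the homogeneous weighted ``very well approximable'' locus $\{x: \ome_{\bs,\br}(F(x))>1\}$ is $\mu$-null---and transfer this nullity to the inhomogeneous locus $\{x: \ome_{\bs,\br}(F(x),\btet)>1\}$ via a weighted adaptation of the inhomogeneous transference principle of Beresnevich--Velani \cite{BV2010}. Concretely, one checks that $F_*\mu$ is ``contracting'' in the sense required by that principle: $\mu$ is Federer, and for the relevant neighbourhoods of rational affine subspaces---now measured in the anisotropic $\|\cdot\|_{\bs},\|\cdot\|_{\br}$ geometry---the good and weakly non-planar hypotheses supply the $(C,\alpha)$-good decay that drives the machine. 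Granting this, the transference principle converts the homogeneous null statement into its inhomogeneous counterpart, giving $\ome_{\bs,\br}(F(x),\btet)\le 1$ for $\mu$-a.e.\ $x$; together with the lower bound this is exactly inhomogeneous $(\bs,\br)$-extremality. (One may instead quote the upper bound directly from \cite[Theorem~6.5]{BKM15} and advertise only the lower-bound route as new.)

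I expect the main obstacle to be the bookkeeping in the upper bound: verifying that the weighted, anisotropic dilations neither destroy the Federer property of $F_*\mu$ nor spoil the $(C,\alpha)$-good estimates, and that the Beresnevich--Velani transference principle---stated isotropically---admits the weighted reformulation one needs. The lower bound, in contrast, is now essentially formal: once Theorems \ref{thm-khin} and \ref{MainThm} are available, Corollary \ref{cor:upper} does everything.
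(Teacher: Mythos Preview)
The paper does not prove Theorem~\ref{thm:BKM} at all: it is quoted verbatim as \cite[Theorem~6.5]{BKM15} and used as a black box. As the sentence immediately following the statement makes explicit, ``inhomogeneously $(\bs,\br)$-extremal'' here means only the upper bound $\omega_{\bs,\br}(A,\btet)\le 1$ for $F_*\mu$-a.e.\ $A$ and every $\btet$; the lower bound is not part of the assertion. Your plan therefore proves more than Theorem~\ref{thm:BKM} claims, and in fact your two halves together reconstruct the \emph{next} (unnumbered) theorem in the paper, which combines the cited upper bound with Corollary~\ref{cor:upper} to obtain equality.

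So the mismatch is structural rather than mathematical. Your lower-bound argument via Corollary~\ref{cor:upper} is exactly what the paper does---but one theorem later, not here. Your upper-bound sketch (homogeneous extremality from quantitative non-divergence, then the weighted Beresnevich--Velani inhomogeneous transference) is indeed the route taken in \cite{BKM15}, and your own parenthetical acknowledges this; but that is precisely why the present paper cites rather than reproves it. There is no ``paper's own proof'' of Theorem~\ref{thm:BKM} to compare against.
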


The conclusion of the theorem above asserts that $\omega_{\bs, \br}(A,\btet) \leq 1$ for $\mu$-almost every $A$ in $F(U)$. We may apply Corollary \ref{cor:upper} using the homogeneous ($\btet = \bzero$) case of Theorem \ref{thm:BKM}, along with the weighted version of Dirichlet's theorem, to conclude thus.

 \begin{thm}
 Let $U, \mu, F$ be as in Theorem \ref{thm:BKM}. Then for $\mu$-almost every $A \in F(U)$ and every $\btet$, we have
 $$\omega_{\bs, \br}(A,\btet) = 1.$$
 \end{thm}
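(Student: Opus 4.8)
The plan is to combine the homogeneous extremality supplied by Theorem \ref{thm:BKM} with the transference machinery developed above, precisely as sketched in the discussion preceding the statement. I would separate the argument into a lower bound and an upper bound for $\omega_{\bs,\br}(A,\btet)$; the lower bound is where the genuine input lies, since there is no inhomogeneous Dirichlet theorem.

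For the upper bound I would simply quote Theorem \ref{thm:BKM}: because $(F,\mu)$ is good and weakly non-planar, $F_*\mu$ is inhomogeneously $(\bs,\br)$-extremal, which by definition means $\omega_{\bs,\br}(A,\btet) \le 1$ for $\mu$-almost every $A \in F(U)$ and every $\btet$. For the lower bound I would first apply the homogeneous instance $\btet=\bzero$ of Theorem \ref{thm:BKM}, giving $\omega_{\bs,\br}(A) \le 1$ for $\mu$-almost every $A$; combining this with the weighted form of Dirichlet's approximation theorem (proved via Minkowski's first convex body theorem, as recalled after Theorem \ref{MainThm}), which supplies $\omega_{\bs,\br}(A) \ge \hat\omega_{\bs,\br}(A) \ge 1$ for every $A$, this forces $\omega_{\bs,\br}(A)=1$ for $\mu$-almost every $A \in F(U)$. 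Now Corollary \ref{cor:upper} applies directly and yields $\omega_{\bs,\br}(A,\btet) \ge 1$ for every $\btet$ and $\mu$-almost every $A$. Putting the two bounds together gives $\omega_{\bs,\br}(A,\btet)=1$, as claimed.

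I do not anticipate a real obstacle: every ingredient is already in place, and the argument is a short assembly. The only point meriting attention is the bookkeeping of quantifiers and null sets: Theorem \ref{thm:BKM} is invoked twice (once homogeneously, once inhomogeneously) and Corollary \ref{cor:upper} once, each producing a $\mu$-null exceptional set, and these must be amalgamated. This is harmless; moreover, unwinding Corollary \ref{cor:upper} through the Proposition preceding it shows that the Proposition produces the lower bound for all $\btet$ simultaneously as soon as $\omega_{\bs,\br}(A)=1$ is known, so the exceptional set for the lower bound may be taken independent of $\btet$.
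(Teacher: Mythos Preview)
Your proposal is correct and follows precisely the route taken in the paper: the upper bound is read off directly from Theorem \ref{thm:BKM}, while the lower bound is obtained by applying the homogeneous case of Theorem \ref{thm:BKM} together with the weighted Dirichlet theorem to get $\omega_{\bs,\br}(A)=1$ almost surely, and then invoking Corollary \ref{cor:upper}. Your remark that the exceptional set for the lower bound can be taken independent of $\btet$, via the Proposition underlying Corollary \ref{cor:upper}, is a correct and useful clarification.
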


\noindent Whilst the scope of the result is very general, we note that it is new even for the Veronese curve (\ref{def:Mahler}) in both the simultaneous and the dual approximation contexts.\\

We now turn our attention to measures which do not satisfy the weak non-planarity condition.  Natural examples are provided by affine subspaces, and the reader may consult \cite{Ghosh} for a recent survey. We specialise to the case of simultaneous approximation. The results below also hold in the dual setting: see \cite{BGGV} for examples of inhomogeneous dual Diophantine approximation on affine subspaces.

\begin{thm}\label{thm:K1}
Let $L$ be an $\br$-extremal affine subspace of $\bR^n$ and let $M$ be a smooth nondegenerate submanifold of $L$. Then, for almost every $A \in M$ and every $\btet \in \bR^n$,
$$\omega_{\br}(A,\btet) = 1.$$
\end{thm}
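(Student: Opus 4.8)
The plan is to establish the two inequalities $\omega_{\br}(A,\btet)\ge 1$ and $\omega_{\br}(A,\btet)\le 1$ separately, each valid for almost every $A\in M$ and every $\btet\in\bR^n$. As in the rest of \S\ref{S2}, the lower bound is the soft half, supplied by the transference machinery of this paper, and the upper bound is the substantive part, obtained from a weighted adaptation of the Beresnevich--Velani inhomogeneous transference principle.

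The homogeneous input common to both bounds is that $M$ is itself $\br$-extremal. Since $L$ is an $\br$-extremal affine subspace and $M$ is a smooth nondegenerate submanifold of $L$, this is the weighted analogue of Kleinbock's theorem on extremal affine subspaces and their submanifolds (compare the discussion around \cite{BV2010}); I would either cite its weighted version or indicate the routine modification of the proof to the weighted quasi-norms. It follows that $\omega_{\br}(A)=1$ for almost every $A\in M$. For each such $A$, the hypothesis of the first Proposition of \S\ref{S2} is met --- and that Proposition already packages the weighted Dyson inequality (Theorem \ref{thm-khin}), the weighted Dirichlet theorem, and Theorem \ref{MainThm} --- so it yields $\omega_{\br}(A,\btet)\ge 1$ for every $\btet\in\bR^n$. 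This is the lower bound, for almost every $A\in M$ and all $\btet$.

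For the upper bound I would invoke a weighted form of the Beresnevich--Velani inhomogeneous transference principle: if a measure on the relevant matrix space is $\br$-contracting almost everywhere, then for every $\btet$ one has $\omega_{\br}(A,\btet)\le 1$ for almost every $A$. It then suffices to check that the natural volume measure carried by $M$ is $\br$-contracting. In the unweighted case Beresnevich and Velani \cite{BV2010} proved exactly this --- that the volume on a nondegenerate submanifold of an extremal affine subspace is contracting --- by fibering $M$ over $L$ and exploiting the extremality of $L$, and the same fibering argument adapts to the weighted quasi-norms. Combining the two bounds gives $\omega_{\br}(A,\btet)=1$.

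The main obstacle is the upper bound, and within it the weighted contracting estimate for $M$. Unlike a generic nondegenerate submanifold of $\bR^n$, a submanifold forced to lie inside a proper affine subspace need not be absolutely decaying, so one cannot fall back on the strong decay estimates that make the weakly non-planar case (Theorem \ref{thm:BKM}) so clean; one must instead work with the weaker contracting notion of Beresnevich--Velani and carry the weights through the fibering reduction to the affine-subspace case. Once the weighted contracting property and the weighted Beresnevich--Velani transference principle are available, the rest of the proof is a formal assembly of Theorems \ref{thm-khin} and \ref{MainThm} with the cited results.
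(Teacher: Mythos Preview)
Your proposal is correct and follows essentially the same route as the paper: first deduce the $\br$-extremality of $M$ from that of $L$ via a weighted adaptation of Kleinbock's argument, then obtain $\omega_{\br}(A,\btet)\ge 1$ from Corollary \ref{cor:upper} (equivalently, the Proposition you cite), and finally obtain $\omega_{\br}(A,\btet)\le 1$ from a weighted version of the Beresnevich--Velani inhomogeneous transference principle as in \cite{BV2010, BKM15}. The paper treats both outside inputs (the weighted Kleinbock step and the weighted contracting verification) exactly as you suggest, sketching the modification for the former and declaring the latter a verbatim repetition of \cite{BV2010}.
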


\begin{proof}

The unweighted analogue of this result was proved in \cite{BV2010}. First we note that $L$ is $\br$-extremal if and only if $M$ is. This is more or less proved by Kleinbock in \cite{Kleinbock}. More precisely, the unweighted (Theorem $1.2$) and multiplicative (Theorem $1.4$) variants of the statement are known. We indicate the minor changes required to prove the weighted version. The argument in \cite{Kleinbock} is based on the dynamical approach developed in \cite{KM}. To a row vector $\by \in \bR^n$, one can associate a unimodular lattice
$$u_\by \bZ^{n+1} := \begin{pmatrix}1 &y\\0 &I_n \end{pmatrix}.$$

The Diophantine properties of $\by$ may be viewed in terms of the $F$-orbit of $u_\by \bZ^{n+1}$ on the moduli space of lattices $\SL_{n+1}(\bR)/\SL_{n+1}(\bZ)$,  where $$F = \{g_t~|~t \geq 0\} \text{ and } g_t = \diag(e^t, e^{-t/n},\dots, e^{-t/n}).$$
One then invokes a quantitative nondivergence result \cite[Theorem $5.2$]{KM}. In order to address the weighted case, we replace $F$ by $F_{\br}$, where for $\br = (r_1,\dots, r_n)$, we set
$$F_{\br} := \{g(\br)_t~|~t \geq 0\} \text{ and } g(\br)_t = \diag(e^{t}, e^{-r_1 t},\dots, e^{-r_n t}).$$
It remains to apply the arguments in \cite[\S 3]{Kleinbock}, \emph{mutatis mutandis}, to deliver our weighted analogue of [loc. cit., Theorem 4.2].

Assume therefore that $M$ is an $\br$-extremal, smooth, nondegenerate submanifold of an affine subspace $L$. Then for almost every $A \in M$, we have that $\omega_{\br}(A) = 1$. We may therefore apply Corollary \ref{cor:upper} to conclude that for almost every $A \in M$ and every $\btet$ we have $\omega_{\br}(A, \btet) \geq 1$.

To prove the lower bound, we apply the transference principle developed by Beresnevich and Velani \cite{BV2010}; see also \cite{BV2010a} where a simplified exposition of the simultaneous transference principle is given, and additionally \cite{BKM15} where the weighted theory is developed. The proof proceeds via a more or less verbatim repetition of \cite{BV2010}, so we omit it.
\end{proof}

%In fact, the Theorem above should be applicable in a more general scenario (see Theorem $1.3$ and Corollary $1.4$ in \cite{Kleinbock-exponent}). Namely, one takes a $d$-dimensional affine subspace $L$ of $\bR^n$, a friendly measure (see \cite{KLW} for the definition) $\mu$ on $\bR^d$, and $f : \bR^d \to L$ an affine isomorphism. Then in loc. cit. it is shown that  $\omega(f_{*}\mu) = \omega(L)$. In particular, $L$ is extremal if and only if so is $f(\bR^d)$. The case of $\br$-extremality was not considered by Kleinbock but it is very likely that the same methods yield a weighted analogue of the result.

The results of Kleinbock \cite{Kleinbock-exponent}, alluded to above, apply to more general situations than extremality. In particular, it has been shown that arbitrary Diophantine exponents (not just the critical exponent) of affine subspaces are inherited by smooth nondegenerate submanifolds. These results also have counterparts in inhomogeneous approximation. All of this was investigated in the non-weighted and multiplicative cases in \cite{GM2018} by two of the authors of the present article, where it was shown that the Beresnevich--Velani transference principle can be adapted to the non-extremal setting to get an upper bound for inhomogeneous exponents. The corresponding lower bound was then obtained using the results of Bugeaud and Laurent along with more transference results of German \cite{German2}. We can provide a weighted analogue in the non-extremal case, using a similar adaptation of the Beresnevich--Velani transference principle for the upper bound. The weighted analogues of the Bugeaud--Laurent and Dyson theorems, namely theorems \ref{MainThm} and \ref{thm-khin}, together with the trivial relation $\omega_{\br,\bs} \ge \hat{\omega}_{\br,\bs}$, provide the lower bound as follows.

\begin{thm}
 Let $U, \mu, F$ be as in Theorem \ref{thm:BKM}. Suppose that for $\mu$-almost every $A \in F(U)$ we have $\omega_{\bs, \br}(A) = :  \omega.$ Then for $\mu$-almost every $A \in F(U)$ and every $\btet$, we have
 
 \[  \omega \ge \omega_{\bs, \br}(A,\btet) \ge    \frac{(m+n-1)\rho_{\bs}\rho_{\br}(\delta_{\br}+\delta_{\bs}\omega)
   -\rho_{\bs}\delta_{\br}\delta_{\bs}(\omega-1)}{(m+n-1)\rho_{\bs}\rho_{\br}(\delta_{\br}
   +\delta_{\bs}\omega)+\rho_{\br}\delta_{\br}\delta_{\bs}(\omega-1)}.\]
   
\end{thm}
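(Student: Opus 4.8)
The plan is to establish the two inequalities separately, each for $\mu$-almost every $A\in F(U)$ and every $\btet$. For the upper bound $\omega_{\bs,\br}(A,\btet)\le\omega$ I would run the Beresnevich--Velani inhomogeneous transference principle in the \emph{non-extremal} regime — the same principle underlying Theorem~\ref{thm:BKM} — exactly as carried out, in the unweighted and multiplicative cases, in \cite{GM2018}, but now for the weighted boxes $\{\|\cdot\|_{\bs}<T\}$, $\{\|\cdot\|_{\br}<T\}$. The input is the hypothesis $\omega_{\bs,\br}(A)=\omega$ for $\mu$-a.e.\ $A$; the relevant regularity of $(F,\mu)$ — its goodness and Federer property — is exactly what the attendant covering machinery requires. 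The only structural change from \cite{GM2018} is to replace the sup-norm dilations by the weighted diagonal action and the Euclidean balls by the weighted boxes, after which the argument goes through \emph{mutatis mutandis}; I would present this half by reduction to that reference, obtaining $\omega_{\bs,\br}(A,\btet)\le\omega$ for $\mu$-a.e.\ $A$ and every $\btet$.

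For the lower bound I would argue deterministically: it holds for \emph{every} $A$ with $\omega_{\bs,\br}(A)=\omega$, hence a fortiori $\mu$-almost everywhere. Applying Theorem~\ref{MainThm} to ${}^tA$ with the two weight systems interchanged, and then the trivial relation $\hat{\omega}_{\br,\bs}\le\omega_{\br,\bs}$, yields, for every $\btet$,
\[
\omega_{\bs,\br}(A,\btet) \ge \hat{\omega}_{\br,\bs}({}^tA)^{-1} \ge \omega_{\br,\bs}({}^tA)^{-1}.
\]
It thus suffices to bound the homogeneous exponent ${}^t\omega:=\omega_{\br,\bs}({}^tA)$ from above in terms of $\omega$, and this is exactly the content of the second inequality of Theorem~\ref{thm-khin}: it reads $\omega\ge\Phi({}^t\omega)$, where
\[
\Phi(x)=\frac{(m+n-1)\rho_{\bs}\rho_{\br}(\delta_{\bs}+\delta_{\br}x)+\rho_{\br}\delta_{\br}\delta_{\bs}(x-1)}{(m+n-1)\rho_{\bs}\rho_{\br}(\delta_{\bs}+\delta_{\br}x)-\rho_{\bs}\delta_{\br}\delta_{\bs}(x-1)}.
\]
Writing $\Phi(x)=\tfrac{ax+b}{cx+d}$, one computes $ad-bc=(m+n-1)\rho_{\bs}\rho_{\br}\delta_{\bs}\delta_{\br}(\rho_{\bs}+\rho_{\br})(\delta_{\bs}+\delta_{\br})>0$ and $\Phi(1)=1$, so $\Phi$ is strictly increasing on $[1,\infty)$; therefore $\omega\ge\Phi({}^t\omega)$ forces ${}^t\omega\le\Phi^{-1}(\omega)$, whence $\omega_{\bs,\br}(A,\btet)\ge\Phi^{-1}(\omega)^{-1}$. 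Solving the linear equation that defines $\Phi^{-1}$ and simplifying produces the displayed lower bound; when $\omega$ lies beyond the range of $\Phi$, so that this quantity is $\le 0$, the bound is vacuous and nothing is asserted.

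I expect the upper bound to be the only real obstacle: it is the one step where new work is needed, namely verifying that inserting the weights does not disturb the Beresnevich--Velani transference argument — routine but substantive. The lower bound is purely formal once Theorems~\ref{MainThm} and \ref{thm-khin} are available, its sole subtlety being the monotonicity of $\Phi$, which is precisely what legitimizes the inversion ${}^t\omega\le\Phi^{-1}(\omega)$ and hence the passage of the inequality through $\Phi^{-1}$.
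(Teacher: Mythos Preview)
Your proposal takes essentially the same route as the paper: the upper bound via a weighted adaptation of the Beresnevich--Velani transference principle as in \cite{GM2018}, and the lower bound by combining Theorem~\ref{MainThm}, Theorem~\ref{thm-khin}, and the trivial inequality $\omega_{\br,\bs}\ge\hat{\omega}_{\br,\bs}$. The paper states these ingredients without working out the inversion; your added remark on the monotonicity of $\Phi$ is exactly what is needed to justify passing from $\omega\ge\Phi({}^t\omega)$ to an upper bound on ${}^t\omega$.
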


Note that in \cite{GM2018} the trivial relation $\omega \ge \hat{\omega}$ is also used, and the results can be refined by using the recent optimal lower bound $\omega \ge G(\hat{\omega})$ for vectors obtained in \cite{MaMosh}. The corresponding optimal lower bounds for matrices, or in the weighted case, remain open.

\section{Inhomogeneous intermediate exponents} \label{S6}

In this final section, we prove Theorem \ref{ourBV}, first defining $\omega_d(\balp, \btet)$. As a prelude, we briefly discuss Laurent's ``algebraic'' definition \cite[\S2]{Laurent} in the homogeneous setting $\btet = \bzero$, with the same notations as in the introduction. Laurent made the following observations.

\begin{enumerate}
\item Recall from the introduction that to a rational subspace $\cL$ we may associate Grassmannian coordinates $\bX $. Let $\balp \in \bR^n$. Let $\balp' = (1,\balp) \in \bR^{n+1}$ and $\bX' \in \bR^{ {n+1 \choose d+1 } }$ be homogeneous coordinates for $[1:\balp] \in \bP_\bR^n$ and $\bX \in \bP_\bR^{ {n+1 \choose d+1 } -1 }$ respectively. As shown in \cite[Lemma 1]{Laurent}, we have
\[
\d([1:\balp], \cL) = \frac{ | \balp' \wedge \bX'| }{ | \balp'| \cdot |\bX'| }.
\]
This circumvents the need for the extremal definition \eqref{min}. 
\item The Pl\"ucker embedding is known to establish a bijection from $\Gr(d, \bP_\bR^n)$ to the set of non-zero decomposable multivectors in the exterior algebra $\Lam^{d+1}(\bR^{n+1})$, up to homothety.
\item When optimising over $\cL \in \Gr(d,\bP_\bR^n)$, or equivalently over multivectors in $\Lam^{d+1}(\bR^{n+1})$, one may drop the assumption of decomposability---see the remark following \cite[Definition 4]{Laurent}.
\end{enumerate}

\noindent We thus arrive at the following variant of \cite[Definition 4]{Laurent}, which is equivalent to Definition \ref{InterDef} (we have normalised in accordance with Remark \ref{normalisation}).

\begin{defn}\label{def-8-1}
 Let $n \in \bN$, let $d \in \{0,1,\ldots,n-1\}$, and let $\balp'=(1,\balp)$ with $\balp \in \bR^n$. The \emph{$d$th ordinary exponent} $\ome_d(\balp)$ (resp. the \emph{$d$th uniform exponent} $\hat \ome_d(\balp)$) is the supremum of $\ome \in \bR$ such that there exists $\bX \in \Lam^{d+1}(\bZ^{n+1})$ for which
\[
|\bX|^{n \choose d}  \le T, \qquad |\balp' \wedge \bX|^{n \choose d+1}  \le T^{-\ome}
\]
for some arbitrarily large real numbers $T$ (resp. for every sufficiently large real number $T$).
\end{defn}

Before we define inhomogeneous intermediate exponents, we give another equivalent definition of intermediate exponents. 
\begin{defn}\label{def-8-2}
  Let $n \in \bN$, let $d \in \{0,1,\ldots,n-1\}$ and let $\balp\in \bR^n$. The \emph{$d$th ordinary exponent} $\ome_d(\balp)$ (resp. the \emph{$d$th uniform exponent} $\hat \ome_d(\balp)$) is the supremum of $\ome \in \bR$ such that there exists $\bfZ \in \Lam^{d}(\bZ^{n})\setminus\{\bzero\}$ and $\bfY \in \Lam^{d+1}(\bZ^{n})$  for which
\[
|\bfZ|^{n \choose d}  \le T, \qquad |\balp \wedge \bfZ+\bfY|^{n \choose d+1} \le T^{-\ome}
\]
for some arbitrarily large real numbers $T$ (resp. for every sufficiently large real number $T$).
\end{defn}

\begin{proof}[Proof of equivalence of these two definitions]
  We set $\be_0=(1,0,\ldots, 0)\in \bZ^{n+1}$, and identify $\{\bx=(x_0,\ldots,x_n)\in \bZ^{n+1}: x_0=0 \}$ and $\{\bx=(x_0,\ldots,x_n)\in \bR^{n+1}: x_0=0 \}$ with $\bZ^n$ and $\bR^n$, respectively. We have
\[\Lam^{d+1}(\bZ^{n+1})= \be_0\wedge (\Lam^{d}(\bZ^{n}))\oplus \Lam^{d+1}(\bZ^{n}), 
\]
so we can uniquely decompose any $\bX\in \Lam^{d+1}(\bZ^{n+1})$ as $\be_0\wedge \bfZ-\bfY$, with $\bfZ \in \Lam^{d}(\bZ^{n})$ and $\bfY \in \Lam^{d+1}(\bZ^{n})$. This gives
\begin{align*}
  \balp'\wedge \bX &= (\be_0+\balp)\wedge (\be_0 \wedge \bfZ-\bfY)=-\be_0\wedge\bfY+ \balp\wedge \be_0 \wedge \bfZ -\balp\wedge \bfY \\
  &=-(\be_0+\balp)\wedge(\balp \wedge \bfZ+\bfY).
\end{align*}
From the definition \eqref{def-inner} of the inner product on $\Lam^{d+1}(\bR^n)$, we see that $\be_0\wedge (\balp \wedge \bfZ+\bfY)$ is orthogonal to $\balp \wedge(\balp \wedge \bfZ+\bfY)$. Thus it follows that
\[|\balp \wedge \bfZ+\bfY|\le |\balp'\wedge \bX| \le |\balp'| |\balp \wedge \bfZ+\bfY|.\]
Note that $\be_0\wedge \bfZ$ is orthogonal to $\bfY$, hence
\[\max\{|\bfZ|, |\bfY|\}\le |\bX| \le 2\max\{|\bfZ|, |\bfY|\}.\]
The equivalence of Definition \ref{def-8-1} and Definition \ref{def-8-2} follows from the inequalities above.
\end{proof}

We proceed to define inhomogeneous intermediate exponents.
By lexicographically ordering a basis for $\Lam^{d+1}(\bR^{n+1})$, we obtain a canonical isomorphism
\[
\Lam^{d+1}(\bR^{n+1}) \simeq \bR^{n+1 \choose d+1},
\]
which we use to identify the two spaces.

\begin{defn} \label{OurDef} Let $n \in \bN$, let $d \in \{0,1,\ldots,n-1\}$ and let $\balp \in \bR^n$. Let $\btet \in \bR^{n \choose d+1}$.
The \emph{$d$th ordinary exponent} $\ome_d(\balp, \btet)$ (resp. the \emph{$d$th uniform exponent} $\hat \ome_d(\balp, \btet)$) is the supremum of $\ome \in \bR$ such that there exists $\bfZ \in \Lam^{d}(\bZ^{n})\setminus\{\bzero\}$ and $\bfY \in \Lam^{d+1}(\bZ^{n})$ for which
\begin{equation} \label{OurInequalities}
|\bfZ|^{n \choose d}   \le T, \qquad |\balp \wedge \bfZ +\bfY+ \btet|^{n \choose d+1}   \le T^{-\ome}
\end{equation}
for some arbitrarily large real numbers $T$ (resp. for every sufficiently large real number $T$).
\end{defn}

Our definition matches the usual definitions for the $d=0$ (simultaneous) and $d=n-1$ (dual) cases. The lemma below formalises this, and generalises the discussion following \cite[Definition 2]{Laurent}.

\begin{lemma} If we interpret $\btet$ as an element of $\bR^n$ in the former case and $\btet$ as an element of $\bR$ in the latter, then
\[
\omega_0(\balp, \btet) = \omega(\balp, \btet), \qquad \hat \omega_0(\balp, \btet) = \hat \omega(\balp, \btet)
\]
and
\[
\omega_{n-1}(\balp, \btet) = \omega(^t \balp, \btet), \qquad \hat \omega_{n-1}(\balp, \btet) = \hat \omega(^t \balp, \btet)
\]
\end{lemma}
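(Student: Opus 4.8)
The plan is to simply unwind Definition~\ref{OurDef} in the two extreme cases $d=0$ and $d=n-1$, and match the resulting inequalities with \eqref{equ-def-w} for the appropriate one-column or one-row matrix. Throughout, one exploits the elementary observation that altering the condition defining an exponent by (a) replacing a norm by an equivalent one, (b) replacing strict inequalities by non-strict ones, or (c) replacing the shift $\btet$ by $-\btet$ leaves the exponent unchanged. Point (c) holds because the ranges of $\bq$ and $\bp$ in \eqref{equ-def-w} are symmetric about $\bzero$, so the substitution $(\bq,\bp)\mapsto(-\bq,-\bp)$ turns a solution with shift $-\btet$ into one with shift $\btet$ while preserving $\|\bq\|_{\br}$; points (a) and (b) affect $T$ only by polynomially bounded factors, hence do not change a supremum of exponents.

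For $d=0$ I would note that ${n\choose 0}=1$, ${n\choose 1}=n$, $\Lam^{0}(\bZ^{n})=\bZ$, and $\Lam^{1}(\bZ^{n})=\bZ^{n}$ sits inside $\Lam^{1}(\bR^{n})=\bR^{n}=\bR^{n\choose 1}$ with $|\cdot|$ the Euclidean norm. Writing $\bfZ=q\in\bZ\setminus\{0\}$ and $\bfY=-\bp\in\bZ^{n}$, one has $\balp\wedge\bfZ=q\balp$, so \eqref{OurInequalities} reads $|q|\le T$ and $|q\balp-\bp+\btet|^{n}\le T^{-\ome}$. Up to the cosmetic changes above (Euclidean versus sup norm on $\bR^{n}$, strictness, the sign of $\btet$) this is exactly \eqref{equ-def-w} for $A=\balp\in M_{n\times1}(\bR)$ in the unweighted case, whence $\ome_{0}(\balp,\btet)=\ome(\balp,\btet)$; the uniform statement is identical with ``arbitrarily large'' replaced by ``sufficiently large''.

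For $d=n-1$ I would use that ${n\choose n-1}=n$ and ${n\choose n}=1$, and that by \eqref{def-inner} the multivectors $\be_{1}\wedge\cdots\wedge\widehat{\be_{j}}\wedge\cdots\wedge\be_{n}$ ($1\le j\le n$) form an orthonormal basis of $\Lam^{n-1}(\bR^{n})$, while $|\be_{1}\wedge\cdots\wedge\be_{n}|=1$. This yields isometric identifications $\Lam^{n-1}(\bR^{n})\cong\bR^{n}$ via $\bfZ=\sum_{j}(-1)^{j-1}q_{j}\,\be_{1}\wedge\cdots\wedge\widehat{\be_{j}}\wedge\cdots\wedge\be_{n}\leftrightarrow\bq$ and $\Lam^{n}(\bR^{n})\cong\bR=\bR^{n\choose n}$ via $\be_{1}\wedge\cdots\wedge\be_{n}\leftrightarrow1$, and a short sign computation gives $\balp\wedge\bfZ=({}^{t}\balp\,\bq)\,\be_{1}\wedge\cdots\wedge\be_{n}$. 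Writing $\bfY=p\,\be_{1}\wedge\cdots\wedge\be_{n}$ with $p\in\bZ$ and $\btet=\theta\,\be_{1}\wedge\cdots\wedge\be_{n}$ with $\theta\in\bR$, the inequalities \eqref{OurInequalities} become $|\bq|^{n}\le T$ and $|{}^{t}\balp\,\bq+p+\theta|\le T^{-\ome}$, which is \eqref{equ-def-w} for $A={}^{t}\balp\in M_{1\times n}(\bR)$ unweighted, again up to the cosmetic changes. Hence $\ome_{n-1}(\balp,\btet)=\ome({}^{t}\balp,\btet)$, and likewise for the uniform exponents.

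I do not anticipate a substantive obstacle: the statement is a translation exercise. The one place requiring genuine care is the $d=n-1$ bookkeeping, namely getting the signs in the identification $\Lam^{n-1}(\bZ^{n})\cong\bZ^{n}$ right, extracting the orthonormality cleanly from \eqref{def-inner}, and confirming that the passage between the powers $|\cdot|^{n\choose d}$, $|\cdot|^{n\choose d+1}$ in Definition~\ref{OurDef} and the weighted norms of Definition~\ref{def-1} is exponent-neutral.
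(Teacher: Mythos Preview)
Your proposal is correct and follows essentially the same approach as the paper: unwind Definition~\ref{OurDef} in the two boundary cases and match with \eqref{equ-def-w}. If anything, you are more explicit than the paper about the cosmetic normalisations (Euclidean versus sup norm, strict versus non-strict inequalities, sign of the shift) and about the orthonormal identification $\Lam^{n-1}(\bR^{n})\cong\bR^{n}$; the paper simply writes $\balp\wedge\bfZ=\langle\balp,\bz\rangle$ without tracking the signs.
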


\begin{proof}
By comparing Definition \ref{OurDef} with Definition \ref{def-1}, it is clear that to prove the lemma,  it suffices to compare \eqref{OurInequalities} with  \eqref{equ-def-w}.
When $d=0$, $\bfZ\in \bZ\setminus\{\bzero\}$ and $\bfY\in \bZ^n$. Writing $\bfZ=z$ and $\bfY=\by$, we have $\balp\wedge \bfZ=z\balp$. Now \eqref{OurInequalities} becomes
\[ |z|\le T \qquad |z\balp+\by+\btet|^n \le T^{-\ome},\] which coincides with \eqref{equ-def-w}. 

When $d=n-1$, we have $\bfZ\in \bZ^n\setminus\{\bzero\}$ and $\bfY\in \bZ$. Writing $\bfZ=\bz$, $\bfY=y$ and $\btet=\theta$, we have $\balp\wedge \bfZ=\langle\balp, \bz \rangle$. Now \eqref{OurInequalities} becomes
\[|\bz|^n \le T \qquad |\langle\balp, \bz \rangle+y+\theta| \le T^{-\ome},\] which coincides with \eqref{equ-def-w}.
\end{proof}

We are ready to prove Theorem \ref{ourBV}. The wedge product with $\balp$ defines a linear map from $\Lam^{d}(\bR^n)$ to $\Lam^{d+1}(\bR^n)$. Hence, in view of Definition \ref{OurDef}, Theorem \ref{ourBV} follows from Theorem \ref{BL}; we expound on this below.

Instead of using matrices, we may define our Diophantine exponents for linear maps and their transpose linear maps. Taking the wedge product with $\balp \in \bR^n$ defines linear maps
\[
\bR^N \simeq \Lam^d(\bR^n) \xrightarrow{f = \balp \wedge \cdot} \Lam^{d+1} (\bR^n) \simeq \bR^M
\]
and
\[
\bR^M \simeq \Lam^{n-d-1}(\bR^n) \xrightarrow{g = \balp \wedge \cdot} \Lam^{n-d} (\bR^n) \simeq \bR^N.
\]
These are transposes of one another, up to sign, for if $\bbet \in \Lam^{n-d-1}(\bR^n) \simeq \Lam^{d+1}(\bR^n)^\vee$ and $\bgam \in \Lam^d(\bR^n)$ then
\[
|\bbet \wedge f(\bgam)| = |\bbet \wedge \balp \wedge \bgam| = |g(\bbet) \wedge \bgam|.
\]
Theorem \ref{BL} therefore reveals that
\[ 
\omega_d(\balp,\btet) \geq \cfrac{1}{\hat{\omega}_{n-1-d}(\balp)},
\]
with equality for almost all $\btet$.

%\begin{remark}
%In \cite[\S2]{German}, German defines Diophantine exponents of the \emph{second type}, extending the definition of $\omega_d(\btet)$ to systems of linear forms. Our method also %works in that context.
%\end{remark}

\providecommand{\bysame}{\leavevmode\hbox to3em{\hrulefill}\thinspace}

\end{document}